\setlist{nosep}
\numberwithin{equation}{section}
\newtheorem{theorem}{Theorem}[section]
\newtheorem{lemma}[theorem]{Lemma}
\newtheorem{prop}[theorem]{Proposition}
\newtheorem*{ack}{Acknowledgement}
\newcommand{\HH}{\mathcal{H}}
\newcommand{\LL}{\mathcal{L}}
\newcommand{\MM}{\mathcal{M}}
\newcommand{\CC}{\mathcal{C}}
\newcommand{\VV}{\mathcal{V}}
\newcommand{\R}{\mathbb{R}}
\newcommand{\RP}{\mathbb{RP}^1}
\newcommand{\N}{\mathbb{N}}
\newcommand{\hhh}{\mathtt{h}}
\newcommand{\iii}{\mathtt{i}}
\newcommand{\jjj}{\mathtt{j}}
\newcommand{\vvv}{\mathbf{v}}
\newcommand{\eps}{\varepsilon}
\newcommand{\fii}{\varphi}
\newcommand{\roo}{\varrho}
\newcommand{\ualpha}{\overline{\alpha}}
\newcommand{\lalpha}{\underline{\alpha}}
\newcommand{\A}{\mathsf{A}}
\newcommand{\dd}{\,\mathrm{d}}
\DeclareMathOperator{\dimloc}{dim_{loc}}
\DeclareMathOperator{\udimloc}{\overline{dim}_{loc}}
\DeclareMathOperator{\ldimloc}{\underline{dim}_{loc}}
\DeclareMathOperator{\dimh}{dim_H}
\DeclareMathOperator{\ldimh}{\underline{dim}_H}
\DeclareMathOperator{\diml}{dim_L}
\DeclareMathOperator{\dimaff}{dim_{aff}}
\DeclareMathOperator{\diam}{diam}
\DeclareMathOperator{\dist}{dist}
\DeclareMathOperator{\proj}{proj}
\DeclareMathOperator{\conv}{conv}
\DeclareMathOperator*{\essinf}{ess\,inf}
\renewcommand{\ge}{\geqslant}
\renewcommand{\le}{\leqslant}
\renewcommand{\geq}{\geqslant}
\renewcommand{\leq}{\leqslant}
\begin{document}
\title{Local Dimension Spectrum for Dominated Planar Self-Affine Sets}

\author{Alex Batsis \and Antti K\"{a}enm\"{a}ki \and Tom Kempton}
\newcommand{\addresses}{\footnotesize%
  (A.~Batsis) \textsc{Department of Mathematics, University of Manchester, UK} \par
  (A.~K\"aenm\"aki) \textsc{University of Eastern Finland, Department of Physics and Mathematics, P.O.\ Box 111, FI-80101 Joensuu, Finland} \\
  \textit{Email address}: \texttt{antti@kaenmaki.net} \par
  (T.~Kempton) \textsc{Department of Mathematics, University of Manchester, UK} \\
  \textit{Email address}: \texttt{thomas.kempton@manchester.ac.uk}
}
\date{\today}
\maketitle

\begin{abstract}
  The local dimension spectrum provides a framework for quantifying the fractal properties of a measure, and it is well understood for non-overlapping self-similar measures. In this article, we study the local dimension spectrum for dominated self-affine measures. By analyzing exact dimensionality, we obtain deterministic results that extend the scope of the local dimension spectrum beyond the almost-sure setting.
\end{abstract}

\section{Introduction} \label{sec:introduction}

Let $(A_1,\ldots,A_N) \in GL_2(\R)^N$ be a tuple of contractive invertible $2 \times 2$-matrices. Given a tuple of translation vectors $(v_1,\ldots,v_N) \in (\R^2)^N$, the collection $(\fii_1,\ldots, \fii_N)$ of contractive invertible affine maps defined by
\[
\fii_i(x)= A_ix+v_i
\]
is called an \emph{affine iterated function system (affine IFS)}. Given an affine IFS, there exists a unique non-empty compact set $X \subset \R^2$ such that
\begin{equation} \label{eq:self-affine-set-def}
  X = \bigcup_{i=1}^N \fii_i(X).
\end{equation}
The self-affine set $X$ can be expressed as the image $\pi(\Sigma)$ of the shift space $\Sigma = \{1,\ldots,N\}^\N$, where $\pi \colon \Sigma \to X$ is the canonical projection of the IFS and $\sigma \colon \Sigma \to \Sigma$ is the left shift; see Sections \ref{sec:exact-dimensionality} and \ref{sec:shift-space} for the corresponding definitions. If $\phi \colon \Sigma \to \R$ is a continuous function and $\mu$ is a Borel probability measure ergodic with respect to $\sigma$, then by the Birkhoff ergodic theorem,
\begin{equation*}
  \lim_{n \to \infty} \frac{1}{n} \sum_{j=0}^{n-1} \phi(\sigma^j\iii) = \int \phi \dd\mu
\end{equation*}
for $\mu$-almost all $\iii \in \Sigma$. The Birkhoff ergodic theorem does not provide information about the exceptional set on which this equality fails. Since any two distinct ergodic measures are mutually singular, the exceptional set supports other ergodic measures and therefore cannot be regarded as negligible. B\'ar\'any, Jordan, K\"aenm\"aki, and Rams \cite[Theorem 2.1]{BaranyJordanKaenmakiRams2020} analyzed the structure of this exceptional set for self-affine sets. They determined the Hausdorff dimension of the canonical projection of the set of $\iii \in \Sigma$ for which the Birkhoff averages $\frac{1}{n} \sum_{j=0}^{n-1} \phi(\sigma^j\iii)$ converge to a given value. As a consequence, they were also able to determine the Hausdorff dimension spectrum of the Lyapunov exponents. A much harder problem on self-affine sets is to understand the local dimension spectrum.

Given a probability vector $(p_1,\ldots,p_k)$, we let the \emph{self-affine measure} $\mu$ be the unique probability measure satisfying
\[
\mu=\sum_{i=1}^N p_i\mu\circ \fii_i^{-1}.
\]
The \emph{local dimension} of $\mu$ at $x$ is given by
\begin{equation*}
  \dimloc(\mu,x) = \lim_{r \downarrow 0} \frac{\log\mu(B(x,r))}{\log r}
\end{equation*}
provided the limit exists. If the limit does not exist, the corresponding upper and lower limits are denoted by $\udimloc(\mu,x)$ and $\ldimloc(\mu,x)$, respectively. The local dimension of $\mu$ is intrinsically connected to the dimension of subsets of $X$. It is easy to see that
\begin{equation*}
  \essinf_{x \sim \mu} \ldimloc(\mu,x) = \ldimh(\mu),
\end{equation*}
where $\ldimh(\mu) = \inf\{\dimh(A) : A \subset X \text{ is a Borel set such that }\mu(A)>0\}$ is the \emph{lower Hausdorff dimension} of $\mu$. Let $s \ge 0$ and define the \emph{$s$-level set} of $X$ with respect to $\mu$ to be
\begin{equation*}
  X(\mu,s) = \{x\in X : \dimloc(\mu,x)=s\}.
\end{equation*}
We are interested in determining the Hausdorff dimension of these level sets. 
The \emph{$L^q$-spectrum} of $\mu$ is defined as
\begin{equation*}
  \tau(\mu,q) = \liminf_{r \downarrow 0} \frac{\log\sup\sum_i\mu(B(x_i,r))^q}{\log r},
\end{equation*}
where the supremum is taken over all collections of pairwise disjoint balls $\{B(x_i,r)\}_i$ centered in the support of $\mu$. We say that the \emph{multifractal formalism holds} for $\mu$ at $s$ if
\begin{equation*}
  \dimh(X(\mu,s)) = \inf\{qs-\tau(\mu,q) : q \in \R\}.
\end{equation*}
Our goal is to investigate the \emph{local dimension spectrum} of $\mu$, that is, the behavior of the function $s \mapsto \dimh(X(\mu,s))$, and to provide tools to assess when the multifractal formalism holds for $\mu$. This spectrum is well understood for self-similar measures satisfying the open set condition, with some progress in the overlapping self-similar and self-affine cases. Notable results on self-similar sets and self-affine carpets involving exact overlaps include \cite{King1995,Olsen1998,BarralMensi2007,JordanRams2011,BarralFeng2012}; results for Hueter-Lalley type self-affine sets are given in \cite{FraserKempton2018}.

In particular, for a tuple of matrices $(A_1, \ldots, A_N)$, each with $\|A_i\| < \frac{1}{2}$, and a probability vector $(p_1, \ldots, p_N)$, Barral and Feng \cite{BarralFeng}, building on Falconer’s work \cite{Falconer1999}, derived a formula for part of the corresponding local dimension spectrum, valid for almost every choice of translation vectors $v_1, \ldots, v_N$. Falconer \cite{Falconer1999} established (see \cite[Theorem 1.2]{BarralFeng}) that the symbolic $L^q$-spectrum $\tau$, defined in Section \ref{sec:symbolic-spectra}, coincides with the $L^q$-spectrum for almost all translation vectors when $1 < q \leq 2$. Since $\tau$ is concave, we write $\tau_\pm'(q)$ for its left and right derivatives. We adopt the convention that whenever a one-sided derivative is used at some $q$, the same choice is used throughout.

\begin{theorem}[Barral-Feng \mbox{\cite[Theorem 1.3]{BarralFeng}}] \label{thm:1.1}
Suppose that $(A_1,\ldots, A_N) \in GL_2(\mathbb R)^N$ is such that $\|A_i\|<\frac{1}{2}$ for all $i \in \{1,\ldots,N\}$ and $(p_1,\ldots,p_N)$ is a probability vector. Given a choice of translation vector $\mathbf{v}=(v_1,\ldots,v_N)$, let $\mu_{\mathbf{v}}$ be the corresponding self-affine measure. If $0<q<1$, $0<q\tau_\pm'(q)-\tau(q) < 1$, and $0<\tau(q)/(q-1)<1$, then $\tau(\mu_{\mathbf{v}},q) = \tau(q)$ and
\[
  \dimh(X(\mu_{\mathbf{v}},\tau_\pm'(q))) = q\tau_\pm'(q) - \tau(q)
\]
for Lebesgue almost every choice of $\mathbf{v}$.
\end{theorem}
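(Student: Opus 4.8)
The plan is to run the standard two-sided multifractal argument: a deterministic upper bound valid for every $\mathbf{v}$, and a lower bound, valid for Lebesgue-a.e.\ $\mathbf{v}$, obtained from an auxiliary equilibrium measure together with a transversality estimate. Throughout write $\alpha=\tau_\pm'(q)$ and $d=q\alpha-\tau(q)$, the Legendre value; since $0<q<1$ we are on the increasing branch of the spectrum, so $\alpha$ exceeds the typical local dimension of $\mu_{\mathbf{v}}$. Let $\mathbb{P}=(p_1,\ldots,p_N)^{\N}$ be the Bernoulli measure on $\Sigma$ and $\pi_{\mathbf{v}}$ the canonical projection of the IFS with translation part $\mathbf{v}$, so that $\mu_{\mathbf{v}}=\mathbb{P}\circ\pi_{\mathbf{v}}^{-1}$. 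The hypothesis $s_q<1$ means only the top singular value is seen: the singular value function equals $\|A\|^{s_q}$, the stopping set $\Lambda(\delta)=\{\mathbf u:\|A_{\mathbf u}\|\le\delta<\|A_{\mathbf u^-}\|\}$ consists of words with $\|A_{\mathbf u}\|\asymp\delta$ (by invertibility of the $A_i$), and $P(\psi^{q,s_q})=0$ gives the renewal estimate $\sum_{\mathbf u\in\Lambda(\delta)}p_{\mathbf u}^q\asymp\delta^{-s_q}=\delta^{\tau(q)}$ from Section~\ref{sec:symbolic-spectra}. For the \emph{upper bound} I would argue: for a side-$\delta$ dyadic cube $Q$, $\mu_{\mathbf{v}}(Q)\le\sum_{\mathbf u\in\Lambda(\delta),\ \fii_{\mathbf u}(X)\cap Q\ne\emptyset}p_{\mathbf u}$, hence $\mu_{\mathbf{v}}(Q)^q\le\sum_{\mathbf u\in\Lambda(\delta),\ \fii_{\mathbf u}(X)\cap Q\ne\emptyset}p_{\mathbf u}^q$ since $q<1$; as each $\fii_{\mathbf u}(X)$ has diameter $\lesssim\delta$ and thus meets $O(1)$ of the $\delta$-cubes, summing over $Q$ gives $\sum_Q\mu_{\mathbf{v}}(Q)^q\lesssim\delta^{\tau(q)}$. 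Since $x\in X(\mu_{\mathbf{v}},\alpha)$ forces $\mu_{\mathbf{v}}(B(x,\delta))\ge\delta^{\alpha+\eps}$ for all small $\delta$, a bounded-overlap cover of $X(\mu_{\mathbf{v}},\alpha)$ (stratified by this scale) by radius-$\delta$ balls centred on it has at most $C\delta^{\tau(q)-q(\alpha+\eps)}=C\delta^{-d-q\eps}$ members, so $\HH^{d+2q\eps}(X(\mu_{\mathbf{v}},\alpha))=0$ and, letting $\eps\downarrow0$, $\dimh X(\mu_{\mathbf{v}},\alpha)\le d$ for every $\mathbf{v}$.

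For the \emph{lower bound} the plan is to exhibit, for a.e.\ $\mathbf{v}$, a measure $\tilde\nu_{\mathbf{v}}$ on $X$ that is carried by $X(\mu_{\mathbf{v}},\alpha)$ and has $\dimh\tilde\nu_{\mathbf{v}}=d$. Let $\nu$ be an ergodic equilibrium state on $\Sigma$ for $\psi^{q,s_q}$ (normalised by $P(\psi^{q,s_q})=0$; at a non-differentiability point of $\tau$ one picks the extreme equilibrium state realising the one-sided slope $\tau_\pm'(q)$) and set $\tilde\nu_{\mathbf{v}}=\nu\circ\pi_{\mathbf{v}}^{-1}$; crucially $\nu$ itself does not depend on $\mathbf{v}$. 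The subadditive variational principle gives $0=h(\nu)+q\Lambda_p(\nu)-s_q\lambda_1(\nu)$, where $\Lambda_p(\nu)=\int\log p_{i_0}\dd\nu<0$ and $\lambda_1(\nu)=-\lim_n\tfrac1n\int\log\|A_{\mathbf i|_n}\|\dd\nu>0$; differentiating $P(\psi^{q,s})=0$ and using the envelope relations $\partial_qP=\Lambda_p(\nu)$, $\partial_sP=-\lambda_1(\nu)$ (Section~\ref{sec:differentiability-of-the-pressure}) yields $\tau_\pm'(q)=-\Lambda_p(\nu)/\lambda_1(\nu)=\alpha$, and substituting back $d=q\alpha-\tau(q)=h(\nu)/\lambda_1(\nu)$. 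By Feng--Hu the measure $\tilde\nu_{\mathbf{v}}$ is exact-dimensional for every $\mathbf{v}$, and by the Ledrappier--Young-type dimension formula analysed in Section~\ref{sec:exact-dimensionality}, which in the regime $s_q<1$ collapses to $h(\nu)/\lambda_1(\nu)$ for a.e.\ $\mathbf{v}$, its dimension equals $d$. Hence the whole lower bound reduces to the claim that $\dimloc(\mu_{\mathbf{v}},x)=\alpha$ for $\tilde\nu_{\mathbf{v}}$-a.e.\ $x$ and a.e.\ $\mathbf{v}$; granting this, $\tilde\nu_{\mathbf{v}}(X(\mu_{\mathbf{v}},\alpha))=1$, so $\dimh X(\mu_{\mathbf{v}},\alpha)\ge\dimh\tilde\nu_{\mathbf{v}}=d$.

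It remains to pin down the local dimension of $\mu_{\mathbf{v}}$ along $\tilde\nu_{\mathbf{v}}$, and this is where the real difficulty lies. Fix $\nu$-typical $\mathbf i$, write $x=\pi_{\mathbf{v}}(\mathbf i)$ and work at the scales $r_n=\|A_{\mathbf i|_n}\|$, for which $\tfrac1n\log r_n\to-\lambda_1(\nu)$ by Kingman. One inequality is essentially free: $\fii_{\mathbf i|_n}(X)$ contains $x$, has diameter $\lesssim r_n$ and $\mathbb{P}$-mass $\ge p_{\mathbf i|_n}$, so $\mu_{\mathbf{v}}(B(x,Cr_n))\ge p_{\mathbf i|_n}$, and since $\tfrac1n\log p_{\mathbf i|_n}\to\Lambda_p(\nu)$, interpolating between consecutive scales via $\|A_i\|\ge c>0$ gives $\udimloc(\mu_{\mathbf{v}},x)\le\alpha$ for \emph{every} $\mathbf{v}$. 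The content is the reverse bound $\ldimloc(\mu_{\mathbf{v}},x)\ge\alpha$, i.e.\ $\mu_{\mathbf{v}}(B(x,r_n))\le r_n^{\alpha-\eps}$ eventually: the $\mu_{\mathbf{v}}$-mass placed near $x$ by cylinders other than $[\mathbf i|_n]$ must be shown negligible. I would do this by a transversality argument in the spirit of Falconer and of Jordan--Pollicott--Simon: over a bounded set of parameters $\mathbf{v}$, estimate the mixed moment $\int\!\!\int\mu_{\mathbf{v}}(B(\pi_{\mathbf{v}}(\mathbf i),r_n(\mathbf i)))^{t}\dd\nu(\mathbf i)\dd\mathbf{v}$ for a small $t\in(0,q)$ by separating the contribution of the cylinder through $\mathbf i$ --- equal to (a constant times) $\sum_{|\mathbf w|=n}p_{\mathbf w}^{t}\nu([\mathbf w])\asymp e^{nP(\psi^{q+t,s_q})}$, which using the Gibbs property of $\nu$ and the expansion $P(\psi^{q+t,s_q})=t\Lambda_p(\nu)+O(t^2)$ equals $r_n^{t\alpha}$ up to an arbitrarily small exponential loss --- from the contribution of the remaining cylinders, which is controlled by the transversality of the maps $\mathbf{v}\mapsto\pi_{\mathbf{v}}(\mathbf i)$; a Fubini/Borel--Cantelli argument over $n$, restricted to $\nu$-good symbols, then yields $\mu_{\mathbf{v}}(B(x,r_n))\le r_n^{\alpha-\eps}$ eventually for a.e.\ $\mathbf{v}$. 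Together with the free inequality this gives $\dimloc(\mu_{\mathbf{v}},x)=\alpha$ for $\tilde\nu_{\mathbf{v}}$-a.e.\ $x$, completing the proof. I expect the main obstacle to be precisely this moment estimate: arranging that the transversality gain on the off-diagonal terms dominates the growth of $\sum_{\mathbf u\in\Lambda(\delta)}p_{\mathbf u}^{t}$ while keeping the sharp exponent on the diagonal term --- in effect, reconciling the two different measures in play, the Bernoulli measure $\mathbb{P}$ behind $\mu_{\mathbf{v}}$ and the equilibrium state $\nu$ behind $\tilde\nu_{\mathbf{v}}$ --- and it is exactly here that the hypotheses $s_q<1$ (keeping the relevant geometry one-dimensional, so that Falconer-type transversality applies) and $q\tau_\pm'(q)-\tau(q)\le1$ (keeping the exponent below the ambient dimension) are needed.
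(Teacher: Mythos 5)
This statement is quoted in the paper as background (it is Barral--Feng's Theorem 1.3, cited, not proved), so there is no in-paper proof to compare against; the relevant benchmark is the Barral--Feng argument itself, and your outline does follow its general shape: a deterministic $L^q$-moment upper bound for every $\mathbf{v}$, plus a lower bound obtained by projecting an auxiliary (sub-)equilibrium measure for $\psi^{q,s_q}$ and using transversality over $\mathbf{v}$. Your upper-bound half is essentially sound, with one small repair: the renewal claim $\sum_{\mathbf u\in\Lambda(\delta)}p_{\mathbf u}^q\asymp\delta^{\tau(q)}$ is stronger than what you can get for a merely sub-multiplicative potential (without domination there is no quasi-multiplicativity), but you only need the one-sided bound $\sum_{\mathbf u\in\Lambda(\delta)}p_{\mathbf u}^q\le\delta^{\tau(q)-o(1)}$, which follows from $\sum_{\Sigma_n}\psi^{q,s_q}=e^{o(n)}$ together with the fact that words in $\Lambda(\delta)$ have length comparable to $|\log\delta|$; with that correction the covering argument gives $\dimh X(\mu_{\mathbf v},\alpha)\le q\alpha-\tau(q)$ for every $\mathbf{v}$, matching the known deterministic direction (Falconer's Theorem 6.2(a) / Olsen).

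The genuine gap is in the lower bound, and you name it yourself: the entire a.e.-$\mathbf{v}$ content of the theorem sits in the estimate $\ldimloc(\mu_{\mathbf v},\pi_{\mathbf v}(\mathbf i))\ge\tau_\pm'(q)$ for $\nu$-typical $\mathbf i$, and your proposal only describes the intended mixed-moment/transversality computation (split the diagonal cylinder from the off-diagonal ones, integrate over $\mathbf v$, Borel--Cantelli) without carrying out the step where the two measures $\mathbb P$ and $\nu$ interact; asserting that the diagonal term behaves like $e^{nP(\psi^{q+t,s_q})}\approx r_n^{t\alpha}$ and that transversality absorbs the rest is precisely the technical heart of Barral--Feng, not a routine verification. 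Two further points need more than you give them: (i) without domination or irreducibility the subadditive pressure need not be differentiable and its equilibrium states need not be Gibbs, so ``pick the extreme equilibrium state realising the one-sided slope'' requires an actual construction (Barral--Feng work with step-$n$ Bernoulli approximations for exactly this reason), and your envelope identities $\partial_qP=\Lambda_p(\nu)$, $\partial_sP=-\lambda_1(\nu)$ are only sub-derivative inequalities in general; and (ii) the claim $\dimh\pi_{\mathbf v*}\nu=h(\nu)/\lambda_1(\nu)$ for a.e.\ $\mathbf v$ cannot be drawn from Section \ref{sec:exact-dimensionality} of this paper (which assumes domination and quasi-Bernoulli structure); it is a Jordan--Pollicott--Simon-type a.e.\ statement, valid here because the hypothesis $q\tau_\pm'(q)-\tau(q)\le1$ forces the Lyapunov dimension of $\nu$ into $[0,1]$. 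As it stands, then, the proposal is a correct strategic outline with the decisive transversality estimate and the construction of the auxiliary measure at non-differentiability points left open.
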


This almost-sure result relies on a transversality argument, in the spirit of Falconer's work on the almost-sure Hausdorff dimension of self-affine sets; see \cite{FalconerAE}. For $q > 1$, Barral and Feng provide analogous results when the matrices $A_i$ are diagonal. According to \cite[\S 7]{BarralFeng}, the results of Barral and Feng extend to a broader class of quasi-Bernoulli measures, beyond self-affine measures (which are canonical projections of Bernoulli measures).

In recent years there has been substantial progress in understanding the Hausdorff dimension of self-affine sets and measures. In the 1980s Falconer \cite{FalconerAE} gave a formula for the generic dimension of a self-affine set and showed that this formula holds typically. However, there were few explicit examples of self-affine sets satisfying Falconer's formula until the 2010s. More recently, sufficient conditions for the validity of Falconer's formula were given in terms of properties of orthogonal projections of the self-affine measure; see \cite{BaranySA,BaranyKaenmaki2017,KemptonFalconer,Feng2023}. These were then used to show that wide classes of self-affine sets satisfy Falconer's formula; see \cite{BaranyHochmanRapaport,Rapaport2024,MorrisSert2023preprint}. 

Our goal is to mirror recent progress on the Hausdorff dimension, replacing the ``almost-everywhere'' statements in \cite{BarralFeng} with explicit projection results and extending their range of applicability. The following proposition completely characterizes the behavior of the symbolic spectrum. Here $\diml(\eta)$ is the Lyapunov dimension of a shift-invariant measure $\eta$ and $\diml(\mu,\eta)$ is the Lyapunov cross-dimension of $\mu$ relative to $\eta$; see Section \ref{sec:lyapunov-dimension}. A more detailed statement can be found in Theorem \ref{LegendreThm}.

\begin{prop}\label{1.1}
  Let $(A_1,\ldots,A_N) \in GL_2(\R)^N$ be a dominated tuple of contractive matrices, $\mu$ be a fully supported quasi-Bernoulli measure on $\Sigma$, and $q \ne 1$. If $\tau'(q)$, $\tau(q)/(q-1)$, and $q\tau'(q)-\tau(q)$ are contained in the same interval, $(0,1)$, $(1,2)$, or $(2,\infty)$, then $\tau'(q) \le \tau(q)/(q-1)$ and
  \begin{equation*}
    \sup\{\diml(\eta):\eta\text{ is such that }\diml(\mu,\eta)=\tau'(q)\} = q\tau'(q)-\tau(q).
  \end{equation*}
\end{prop}

Note that the interval containing $\tau(q)/(q-1)$ determines which singular values must be considered. The Lyapunov dimension of the auxiliary measure that attains the above supremum and the Lyapunov cross-dimension of $\mu$ relative to $\eta$ are identified via $q\tau'(q)-\tau(q)$ and $\tau'(q)$, respectively, in the same interval as $\tau(q)/(q-1)$.

The Furstenberg direction set $X_F$ is the closure of the top Oseledets directions $V$ for the inverse linear action; see Section \ref{sec:domination} for details. A planar self-affine set $X$ is dominated, for instance, when the linear parts of its defining affine maps have positive entries; see Section \ref{sec:domination} for a precise definition. Throughout this work, we assume that $X$ is dominated and that $X_F$ is not a singleton. This holds, for example, when a dominated self-affine set is irreducible, meaning that the linear parts of its defining affine maps share no common invariant line; see Section \ref{sec:exact-dimensionality}. Finally, we say that $X$ satisfies the \emph{projective strong separation condition} if
\begin{equation} \label{eq:PSSC}
  \proj_{V^\bot}(\conv(\fii_i(X))) \cap \proj_{V^\bot}(\conv(\fii_i(X))) = \emptyset
\end{equation}
for all $V \in X_F$ whenever $i \ne j$, where $\proj_{V^\perp}$ is the orthogonal projection onto $V^\perp$. It is evident that the projective strong separation condition implies the projective open set condition defined in \cite[\S 3]{BaranyKaenmakiYu2021}. It also implies the \emph{strong separation condition}, meaning that $\fii_i(X) \cap \fii_j(X) = \emptyset$ whenever $i \ne j$. Our first main result covers the dimension range $(0,1)$.

\begin{theorem}\label{1.2}
  Suppose that $X$ is a dominated planar self-affine set satisfying the projective strong separation condition such that $X_F$ is not a singleton, and that $\mu$ is a fully supported quasi-Bernoulli measure on $X$. If $0<q\ne 1$, $0<q\tau'(q)-\tau(q)<1$, and $0<\tau(q)/(q-1)<1$, then
  \begin{equation*}
    \dimh(X(\mu,\tau'(q))) = q\tau'(q)-\tau(q).
  \end{equation*}
\end{theorem}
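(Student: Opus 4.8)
The plan is to prove the upper bound $\dimh(X(\mu,\tau'(q))) \le q\tau'(q)-\tau(q)$ by a standard covering/large-deviations argument, and the matching lower bound by constructing a sufficiently spread-out Frostman measure supported on a subset of $X(\mu,\tau'(q))$. First I would pass to the symbolic level: since $X$ is dominated, irreducible, and satisfies the projective strong separation condition, the canonical projection $\pi$ is essentially bi-Lipschitz on cylinders relative to the natural symbolic metric adapted to the singular values of the matrix cocycle (this is exactly what domination plus the separation hypothesis buys us — the images $\fii_\iii(X)$ behave like well-separated ellipses of the expected eccentricity). Thus it suffices to compute the dimension of the corresponding symbolic level set, and here Theorem~\ref{LegendreThm} (the complete characterization of the symbolic spectrum via the Legendre transform of $\tau$) should give both bounds at the symbolic level directly: the hypotheses $0<q\tau'(q)-\tau(q)<1$ and $0<\tau(q)/(q-1)<1$ are precisely the conditions that keep the Legendre value $q\tau'(q)-\tau(q)$ and the ``projected'' quantity $\tau(q)/(q-1)$ inside the range $(0,1)$ where the affine geometry does not interfere.

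For the upper bound, let $s = \tau'(q)$ and $\alpha = q\tau'(q)-\tau(q)$. I would cover $X(\mu,s)$ (up to a set contributing nothing to dimension, namely where the local dimension does not converge) by the cylinder sets $\fii_\iii(X)$ at a fixed small scale $\delta$ whose $\mu$-measure is comparable to $(\diam \fii_\iii(X))^{s}$; summing $(\diam \fii_\iii(X))^{\alpha}$ over such cylinders and using the definition of $\tau$ together with the zero $s_q$ of the pressure $P(\psi^{q,s})$ from Section~\ref{sec:differentiability-of-the-pressure} shows this sum stays bounded, giving $\dimh \le \alpha$. This step uses the quasi-Bernoulli property to get the almost-multiplicativity needed for the pressure/$L^q$-spectrum machinery, and it uses $\alpha<1$ so that the covering by cylinder ``tubes'' is efficient (for $\alpha>1$ one would need to subdivide the long direction, which is where the $q>1$ diagonal restriction in Barral--Feng comes from).

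For the lower bound, I would fix the auxiliary Gibbs-type measure $\nu = \nu_{q}$ associated to the potential $\psi^{q,s_q}$, i.e. the tilted measure at the parameter $q$; by the variational/differentiability properties of the pressure (Section~\ref{sec:differentiability-of-the-pressure}) and exact dimensionality of self-affine measures (Section~\ref{sec:exact-dimensionality}, Ledrappier--Young-type formula), $\nu$-a.e.\ point has $\dimloc(\mu,\cdot) = \tau'(q)$, so $\nu$ is carried by $X(\mu,s)$. It then remains to show $\dimh \nu \ge \alpha$, which follows from a Frostman estimate: using the projective strong separation condition to control how cylinders of comparable size overlap in $\R^2$, the $\nu$-measure of a Euclidean ball $B(x,r)$ is at most $\approx r^{\alpha}$; the constraint $\tau(q)/(q-1)<1$ is used precisely here to guarantee that, at the relevant scales, a ball meets only boundedly many cylinders of the dominant (thin) shape, so no dimension is lost to the projection onto the Furstenberg direction $X_F$. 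The main obstacle, and the technical heart of the argument, is this last geometric point: transferring the sharp symbolic dimension of $\nu$ down to a sharp Euclidean Frostman exponent for the self-affine measure, where one must carefully track the two singular-value scales of each cylinder and invoke domination (to ensure the ellipses do not rotate) together with \eqref{eq:PSSC} (to ensure well-separated projections along every Furstenberg direction simultaneously). Everything else — the large-deviation covering, the pressure differentiability, the identification of $\tau'(q)$ with the local dimension — is by now standard given the results quoted in the earlier sections.
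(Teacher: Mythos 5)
Your overall architecture matches the paper's: the upper bound by a Vitali-type covering at the pressure zero $s_q$ (this is essentially Propositions \ref{thm:upper-bound-small-q} and \ref{thm:upper-bound-large-q}\eqref{it:upper-bound-large-q-assumption-1}, where the PSSC is used exactly to get the ball-to-cylinder bound $\pi_*\mu(B(\pi(\iii),\roo\alpha_1(\iii|_n)))\le C\mu([\iii|_n])$, and where one also needs the small perturbation $\alpha_1^\kappa\psi^{q,s_q}$ to make the covering sums geometrically summable, a point your sketch glosses), and the lower bound by showing that the equilibrium state $\nu$ of $\psi^{q,s_q}$ projects to a measure carried by $X(\mu,\tau'(q))$ with $\ldimh(\pi_*\nu)\ge q\tau'(q)-\tau(q)$ (Proposition \ref{thm:main-large-q}). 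However, there are genuine gaps in how you execute the lower bound, which you yourself identify as the heart of the matter but do not carry out. First, the opening reduction is not valid: Theorem \ref{LegendreThm} is a variational statement about Lyapunov dimensions of invariant measures, not a computation of the Hausdorff dimension of any level set, and $\pi$ is not bi-Lipschitz for any symbolic metric in this non-conformal setting; the passage from $\diml(\nu)=q\tau'(q)-\tau(q)$ (Theorem \ref{thm:dim-tau-derivative}) to $\ldimh(\pi_*\nu)=q\tau'(q)-\tau(q)$ is precisely where the paper invokes the B\'ar\'any--Hochman--Rapaport result \eqref{eq:BHR}, and this is the only place irreducibility enters --- your proposal never uses irreducibility, which signals that this step has not actually been supplied. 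A direct separation-based substitute is in fact possible in the range $s_q<1$ (the PSSC iteration in the paper's proof shows $B(\pi(\iii),\roo(2D)^{-1}\alpha_1(\iii|_n))$ meets no other $n$-cylinder, so $\pi_*\nu(B)\le\nu([\iii|_n])$ and Kingman gives $\ldimloc(\pi_*\nu,\pi(\iii))\ge -h(\nu)/\lambda_1(\nu)$ at $\nu$-typical points), but your formulation of it --- a uniform Frostman bound $\nu(B(x,r))\lesssim r^\alpha$ for all balls --- is false in general, since an equilibrium state is only quasi-Bernoulli, not Ahlfors regular; only the almost-everywhere, typical-scale version holds, and that is what must be proved.

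Second, your claim that ``$\nu$-a.e.\ point has $\dimloc(\mu,\cdot)=\tau'(q)$'' by exact dimensionality/Ledrappier--Young is misattributed: those results give the dimension of $\pi_*\nu$ with respect to itself, not the local dimension of $\pi_*\mu$ at $\nu$-typical points. What is needed is a two-sided comparison at $\nu$-typical points: the trivial inclusion $\fii_{\iii|_n}(X)\subset B(\pi(\iii),\diam(X)\alpha_1(\iii|_n))$ gives the upper bound, and the PSSC estimate \eqref{eq:main-large-q1} gives the matching lower bound, after which Kingman's theorem for the cross-entropy $h(\mu,\nu)$ and for $\lambda_1(\nu)$, together with Proposition \ref{thm:spectrum-derivative} identifying $-h(\mu,\nu)/\lambda_1(\nu)=\tau'(q)$, yields $\dimloc(\pi_*\mu,\pi(\iii))=\tau'(q)$ $\nu$-almost everywhere, hence $\pi_*\nu(X(\mu,\tau'(q)))=1$. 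You gesture at the right role of the hypothesis $0<\tau(q)/(q-1)<1$ (it keeps everything at the $\alpha_1$ scale so the projection onto Furstenberg directions never has to be analysed), but without the cross-entropy argument and the corrected, almost-sure Frostman statement, the lower bound as written does not constitute a proof; it would need either \eqref{eq:BHR} (the paper's route, using irreducibility) or a fully worked-out version of the direct PSSC estimate sketched above.
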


To gain further insight into the projective strong separation condition, observe that $\dimh(X) < 1$ since $X_F$ is invariant. Hence, this condition implies all equivalent statements in \cite[Theorem 3.5]{BaranyKaenmakiYu2021}. In particular, it ensures that $\proj_{V^\bot}(X)$ is Ahlfors regular for all $V \in X_F$; thus, measures supported on $\proj_{V^\bot}(X)$ cannot be absolutely continuous. Examples of dominated, irreducible planar self-affine sets satisfying the projective strong separation condition are given in \cite[Corollary 1.4]{BaranyKaenmakiYu2021}. The second main result covers the dimension range $(1,2)$.

\begin{theorem}\label{1.3}
  Suppose that $X$ is a dominated planar self-affine set satisfying the strong separation condition such that $X_F$ is not a singleton, and that $\mu$ is a fully supported quasi-Bernoulli measure on $X$ such that $(\proj_{V^\bot})_*\mu$ is absolutely continuous with uniformly bounded density over all $V \in X_F$. If $0<q\ne 1$, $1<q\tau'(q)-\tau(q)<2$, and $1<\tau(q)/(q-1)<2$, then
  \begin{equation*}
    \dimh(X(\mu,\tau'(q))) = q\tau'(q)-\tau(q).
  \end{equation*}
\end{theorem}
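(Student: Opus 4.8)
The plan is to establish $\dimh(X(\mu,\tau'(q))) \le q\tau'(q)-\tau(q)$ and the reverse inequality separately. Write $s = \tau'(q)$ and $D = q\tau'(q) - \tau(q) = \tau^*(s)$, the value at $s$ of the Legendre transform of the $L^q$-spectrum, so $D \in (1,2)$ by hypothesis, and let $\tilde\mu$ denote the quasi-Bernoulli lift of $\mu$ to $\Sigma$. The bridge between $\Sigma$ and $\R^2$ is the family of \emph{approximate squares}: by domination the cylinder images $\fii_\iii(X)$ are uniformly fat parallelograms, so one groups cylinders into sets $Q \subset \Sigma$ whose images $\pi Q$ are comparable to genuine Euclidean squares of a prescribed side $\sigma(Q)$, and the assumption $\tau(q)/(q-1) \in (1,2)$ is exactly what places us in the branch $\alpha_1(A)\alpha_2(A)^{s-1}$ of the singular value function ($\alpha_1 \ge \alpha_2$ the singular values) that controls $\tilde\mu(Q)$ in terms of $\sigma(Q)$. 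The strong separation condition makes distinct approximate squares have disjoint images, so $\mu(\pi Q) = \tilde\mu(Q)$, and geometrically a ball $B(\pi\iii,r)$ contains an approximate square of side $\asymp r$ through $\iii$ and is met by only boundedly many approximate squares of side $\asymp r$.

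For the upper bound I would first use these two inclusions to identify the Euclidean $L^q$-spectrum of $\mu$ with $\tau$: a packing of $X$ by $r$-balls corresponds to a bounded-overlap collection of approximate squares of side $\asymp r$, so the two $L^q$-sums agree up to a constant by sub- and super-additivity of $t \mapsto t^q$ for $0<q<1$ and $q>1$ respectively. The classical multifractal upper bound then gives $\dimh\{x : \udimloc(\mu,x) \le s\} \le \tau^*(s)$ when $s \le \tau'(1)$ and $\dimh\{x : \ldimloc(\mu,x) \ge s\} \le \tau^*(s)$ when $s \ge \tau'(1)$; since $s = \tau'(q)$ lies below $\tau'(1)$ when $q > 1$ and above it when $0<q<1$, and $X(\mu,s)$ is contained in the corresponding one-sided level set, we obtain $\dimh(X(\mu,s)) \le \tau^*(s) = D$. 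This half uses only domination and strong separation.

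For the lower bound I would apply the mass distribution principle to an auxiliary Gibbs measure. Let $s_q = \tau(q)/(q-1) \in (1,2)$ be the zero of $s \mapsto P(\psi^{q,s})$ and let $\tilde\nu$ be the equilibrium state on $\Sigma$ for $\psi^{q,s_q}$, which by domination is a well-defined quasi-Bernoulli measure with $P(\psi^{q,s_q}) = 0$. By the differentiability of the pressure established earlier together with the variational principle underlying Theorem~\ref{LegendreThm}, $\tilde\nu$ is exact dimensional for the approximate-square filtration, the approximate-square local dimension of $\tilde\mu$ along $\tilde\nu$-a.e.\ $\iii$ equals $s = \tau'(q)$, and that of $\tilde\nu$ equals $D$. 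Put $\nu = \pi_*\tilde\nu$. It then suffices to show (a) $\dimloc(\mu,\pi\iii) = s$ for $\tilde\nu$-a.e.\ $\iii$, so that $\nu(X(\mu,s)) = 1$, and (b) $\ldimloc(\nu,\pi\iii) \ge D$ for $\tilde\nu$-a.e.\ $\iii$, whence $\dimh(X(\mu,s)) \ge \dimh(\nu) \ge D$. Claim (b) is soft: $B(\pi\iii,r)$ is met by boundedly many approximate squares of side $\asymp r$, so $\nu(B(\pi\iii,r)) \le \sum_k \tilde\nu(Q_k) \le r^{D-\eps}$ for every $\eps>0$ and small $r$, using disjointness of images and the uniform upper regularity of the Gibbs measure $\tilde\nu$ with respect to approximate squares; no absolute continuity of $\nu$ is needed. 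Likewise, in (a) the bound $\udimloc(\mu,\pi\iii) \le s$ is soft, since $B(\pi\iii,r)$ contains an approximate square of side $\asymp r$ through $\iii$ whose $\tilde\mu$-mass is $r^{s+o(1)}$ along a $\tilde\nu$-typical $\iii$.

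The hard part is the remaining inequality in (a), namely $\ldimloc(\mu,\pi\iii) \ge s$ for $\tilde\nu$-a.e.\ $\iii$, i.e.\ $\mu(B(\pi\iii,r)) \le r^{s-o(1)}$, and this must be established \emph{uniformly in the basepoint}, because $\nu$ is mutually singular with $\mu$ and the a.e.\ exact-dimensionality and Ledrappier--Young statements available for $\mu$ apply only along $\mu$-typical orbits. The crude estimate $\mu(B(\pi\iii,r)) \lesssim r$, obtained from the bounded density of $(\proj_{V^\bot})_*\mu$ and the fact that $B(\pi\iii,r)$ lies over a $\proj_{V^\bot}$-interval of length $\asymp r$, gives only $\ldimloc(\mu,\pi\iii) \ge 1$, which is useless when $s > 1$. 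To obtain the sharp bound I would disintegrate $\mu = \int \mu_t \dd (\proj_{V^\bot})_*\mu(t)$ over $\proj_{V^\bot}$ and use that the fibre direction $V$ is contracted at the faster rate $\alpha_2$, so an approximate square of side $\rho$ through $\pi\iii$ lies over a $\proj_{V^\bot}$-interval of length $\asymp\rho$ and meets a typical fibre in a set comparable to a fibre interval of length $\asymp\rho$; the \emph{uniformly} bounded density of $(\proj_{V^\bot})_*\mu$ over \emph{all} $V \in X_F$ --- which is where domination together with irreducibility enters, making the family of projections both meaningful and controllable --- then forces $\mu(B(\pi\iii,r))$ and the mass of the smallest approximate square enclosing it to agree up to a factor $r^{\pm o(1)}$, uniformly in $\pi\iii$, hence along $\nu$-typical orbits. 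This uniform ``ball versus approximate square'' comparison, resting on the conditional-measure (Ledrappier--Young-type) analysis carried out earlier in the paper, is precisely where the absolute continuity hypothesis is indispensable and where the argument goes beyond the range $D \le 1$ of Barral--Feng; with it in hand, claim (a) and the identification of all the relevant symbolic quantities with their Euclidean counterparts follow, and the proof is complete.
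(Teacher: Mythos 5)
Your overall architecture (upper bound by a Legendre-type covering argument, lower bound by pushing forward the equilibrium state $\nu$ of $\psi^{q,\tau(q)/(q-1)}$ and showing that $\mu$ has local dimension $\tau'(q)$ at $\nu$-typical points) matches the paper, and you correctly single out the uniform ``ball versus cylinder-times-projection'' comparison as a place where the bounded density of $(\proj_{V^\bot})_*\mu$ must enter; that is the content of Lemma \ref{thm:falconer-kempton-lemma} and Theorem \ref{thm:exact-dim}. But two steps you label as soft are in fact the deep ones, and your treatment of them has genuine gaps. First, your claim (b), that $\ldimloc(\nu,\pi\iii)\ge D=q\tau'(q)-\tau(q)$ for $\nu$-a.e.\ $\iii$ follows from counting approximate squares, is exactly the statement $\dimh(\pi_*\nu)\ge\diml(\nu)$ for the auxiliary quasi-Bernoulli measure $\nu$; this is the B\'ar\'any--Hochman--Rapaport input \eqref{eq:BHR}, and it is the only place where irreducibility is used in the paper. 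Your argument never invokes irreducibility, and under domination and strong separation alone the claim is false (diagonal, carpet-type systems exhibit dimension drop for $\pi_*\nu$). The counting cannot be soft because the mass of an approximate square of side $\rho$ inside a cylinder $[\jjj]$ is not a symbolic quantity: it equals $\nu([\jjj])$ times the mass that a projection of the conditional measure assigns to an interval of length about $\rho/\alpha_1(\jjj)$, and without knowing these projections of $\nu$ have dimension one you do not reach the exponent $D$ when $D\in(1,2)$. The same objection applies to your ``soft'' half of (a): to say the approximate square through $\iii$ has $\tilde\mu$-mass $r^{s+o(1)}$ you need a lower bound on the projected measure of short intervals at $\nu\times\nu_F$-typical points, i.e.\ local dimension exactly $1$ of the projections, not merely the upper bound $\le Cr$ coming from bounded density; this is precisely the hypothesis of Theorem \ref{thm:exact-dim} and Proposition \ref{thm:main-small-q-large-other}.

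Second, the upper bound for $q>1$ is not ``only domination and strong separation.'' Identifying the Euclidean $L^q$-spectrum of $\pi_*\mu$ with the pressure-defined $\tau$ by sub/super-additivity over approximate squares fails for $q>1$: if the projection of $\mu$ in the Furstenberg direction is very singular, an $r$-ball can capture mass comparable to an entire cylinder $[\jjj]$ with $\alpha_2(\jjj)\approx r$, the moment sums $\sum_B\pi_*\mu(B)^q$ then exceed $r^{\tau(q)}$, and for carpet-type dominated systems the inequality $\dimh(X_s)\le qs-\tau(q)$ with the symbolic $\tau$ is genuinely false for $q>1$. This is why the paper's Proposition \ref{thm:upper-bound-large-q}(2) assumes the uniform bound $(\proj_{V^\bot})_*(\pi_*\mu)(\proj_{V^\bot}(B(x,r)))\le Cr$ for all $V\in X_F$ and runs a Vitali covering argument at scales $\alpha_2(\iii|_n)$, bounding each ball mass through Lemma \ref{thm:falconer-kempton-lemma}; the bounded density hypothesis is thus indispensable in the upper bound as well, not only in your step (a). (A small additional slip: under domination the cylinder images are uniformly \emph{eccentric} parallelograms, not fat ones, which is exactly why the projected measures, and not symbolic data alone, control masses of balls and approximate squares.)
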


Theorems \ref{1.2} and \ref{1.3} are proved via a sequence of propositions stated in Section \ref{sec:multifractal-formalism}. The propositions in Section \ref{sec:multifractal-formalism} are slightly more general than Theorems \ref{1.2} and \ref{1.3}; for example, the proof of the lower bound in Theorem \ref{1.3} requires only that the projections of $\mu$ have dimension one, not that they are absolutely continuous with bounded density.

Recall that, by \cite[Lemma 2.10]{BaranyKaenmakiYu2021} and \cite[Theorem 7.1]{BaranyHochmanRapaport}, we have 
\begin{equation*}
  \ldimh((\proj_{V^\perp})_*\mu) = 1
\end{equation*}
for all $V \in \RP$ and fully supported self-affine measures $\mu$ on $X$ with $\ldimh(\mu) \geq 1$. Recently, B\'ar\'any \cite[Theorem 1.2(i)]{Barany2025} strengthened this result for the equilibrium state of the singular value function. Specifically, if $\HH^s(X) > 0$, where $1 < s \leq 2$ is the affinity dimension, then the equilibrium state satisfies the conditions of Theorem \ref{1.3}. The affinity dimension and the equilibrium state for the singular value function are defined in Section \ref{sec:equilibrium-states}. The following proposition extends this by adapting the techniques of \cite{Barany2025} to construct a class of self-affine measures satisfying the conditions of Theorem \ref{1.3}.

\begin{prop} \label{thm:abs-cont}
  Let $\fii_i \colon \R^2 \to \R^2$, $\fii_i(x) = A_ix+v_i$, where $v_i = (v_i^1,v_i^2)$ and
  \begin{equation*}
    A_i =
    \begin{pmatrix}
      a_i & 0 \\ 
      b_i & c_i
    \end{pmatrix}
  \end{equation*}
  with $0<|a_i|<|c_i|<\tfrac12$ for all $i \in \{1,\ldots,N\}$ such that the matrices $A_i$ are not simultaneously diagonalizable. If the associated self-affine set $X$ satisfies the strong separation condition and $\mu$ is a fully supported quasi-Bernoulli measure satisfying
  \begin{equation} \label{eq:abs-cont-qb}
    \inf\biggl\{ s \geq 0 : \sum_{n \in \N} \sum_{\mathtt{i} \in \Sigma_n} \frac{ \mu([\mathtt{i}])^2 }{ \|A_\mathtt{i}\|^s } < \infty \biggr\} > 3,
  \end{equation}
  then $\A = (A_1,\ldots,A_N) \in GL_2(\R)$ is dominated, $X_F$ is not a singleton, and $(\proj_{V^\bot})_*\mu$ is absolutely continuous with uniformly bounded density over all $V \in X_F$ for Lebesgue almost every choice of $(v_1^2,\ldots,v_N^2)$.
\end{prop}

We prove Proposition \ref{thm:abs-cont} at the end of Section \ref{sec:multifractal-formalism}. This proposition confirms that the class of self-affine sets and measures satisfying the conditions of Theorem \ref{1.3} is non-empty. We also remark that verifying condition \eqref{eq:abs-cont-qb} for self-affine measures is straightforward; see \eqref{eq:abs-cont-mu3}. Given that this result relies on a random choice of translation vectors, constructing explicit examples remains a challenging and open problem.

The paper is organized as follows. In Sections \ref{sec:thermodynamic-formalism} and \ref{sec:symbolic-spectra}, we work in the shift space, cover preliminaries on thermodynamic formalism, and determine the properties of the symbolic $L^q$-spectrum. In Sections \ref{sec:exact-dimensionality} and \ref{sec:multifractal-formalism}, we work in the plane, study the exact dimensionality of quasi-Bernoulli measures in detail, and transfer the symbolic results to self-affine sets.

\section{Thermodynamic Formalism} \label{sec:thermodynamic-formalism}

\subsection{Shift Space} \label{sec:shift-space}

Let $\Sigma = \{ 1,\ldots,N \}^\N$ be the collection of all infinite words obtained from the alphabet $\{ 1,\ldots,N \}$. If $\iii = i_1i_2\cdots \in \Sigma$, then we define $\iii|_n = i_1 \cdots i_n$ for all $n \in \N$. If $\iii = i_1 \cdots i_n$, then we write $\overleftarrow{\iii} = i_n \cdots i_1$. The empty word $\iii|_0$ is denoted by $\varnothing$. Define $\Sigma_n = \{ \iii|_n : \iii \in \Sigma \}$ for all $n \in \N$ and $\Sigma_* = \bigcup_{n \in \N} \Sigma_n \cup \{ \varnothing \}$. Thus $\Sigma_*$ is the collection of all finite words. The length of $\iii \in \Sigma_* \cup \Sigma$ is denoted by $|\iii|$. The longest common prefix of $\iii,\jjj \in \Sigma_* \cup \Sigma$ is denoted by $\iii \wedge \jjj$ and the concatenation of two words $\iii \in \Sigma_*$ and $\jjj \in \Sigma_* \cup \Sigma$ is denoted by $\iii\jjj$. Let $\sigma$ be the \emph{left shift} operator defined by $\sigma\iii = i_2i_3\cdots$ for all $\iii = i_1i_2\cdots \in \Sigma$. If $\iii \in \Sigma_n$ for some $n$, then we set $[\iii] = \{ \jjj \in \Sigma : \jjj|_n = \iii \}$. The set $[\iii]$ is called a \emph{cylinder set}.

If $\mu \in \MM(\Sigma)$, where $\MM(\Sigma)$ denotes the collection of all Borel probability measures on $\Sigma$, then for a measurable map $f$ defined on $\Sigma$ we denote the pushforward of a measure $\nu$ by $f_*\nu$. We say that a measure $\mu \in \MM(\Sigma)$ is \emph{fully supported} if every cylinder set has positive measure.

\subsection{Lyapunov Dimension} \label{sec:lyapunov-dimension}

We shall consider maps $\theta \colon \Sigma_* \to (0,\infty)$ which we refer to as \emph{potentials}. We say that a potential $\theta$ is \emph{sub-multiplicative} if $\theta(\iii\jjj) \le \theta(\iii)\theta(\jjj)$ for all $\iii,\jjj \in \Sigma_*$. A potential $\theta$ is \emph{super-multiplicative} if the inverse $1/\theta$ is sub-multiplicative. We furthermore say that a potential $\theta$ is \emph{almost-multiplicative} if there is a constant $C \ge 1$ such that $C\theta$ is sub-multiplicative and $C^{-1}\theta$ is super-multiplicative, and \emph{multiplicative} if the constant $C$ can be chosen to be $1$. Let $\MM_\sigma(\Sigma)$ denote the collection of all $\sigma$-invariant Borel probability measures on $\Sigma$. For a sub-multiplicative potential $\theta$ and $\nu \in \MM_\sigma(\Sigma)$, we define
\begin{equation*}
  \Lambda(\theta,\nu) = \lim_{n\to\infty}\frac{1}{n}\int_\Sigma \log\theta(\iii|_n)\dd\nu(\iii).
\end{equation*}
The following well-known lemma guarantees that $\Lambda$ is well-defined.

\begin{lemma} \label{thm:fekete}
  If $\theta$ is a sub-multiplicative potential and $\nu \in \MM_\sigma(\Sigma)$, then $\Lambda(\theta,\nu)$ exists and
  \begin{equation*}
    \Lambda(\theta,\nu) = \inf_{n\in\N}\frac{1}{n}\int_\Sigma \log\theta(\iii|_n)\dd\nu(\iii).
  \end{equation*}
  If $\nu$ is ergodic, then
  \begin{equation*}
    \Lambda(\theta,\nu) = \lim_{n\to\infty}\frac{1}{n} \log\theta(\iii|_n)
  \end{equation*}
  for $\nu$-almost all $\iii \in \Sigma$. Furthermore, $\nu \mapsto \Lambda(\theta,\nu)$ defined on $\MM_\sigma(\Sigma)$ is upper semi-continuous in the weak$^*$ topology.
\end{lemma}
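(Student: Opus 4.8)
The plan is to reduce each claim to a standard subadditivity argument. First I would establish that $\Lambda(\theta,\nu)$ exists together with the infimum formula. Set $a_n = \int_\Sigma \log\theta(\iii|_n)\dd\nu(\iii)$; the idea is to show the sequence $(a_n)_{n\in\N}$ is subadditive, i.e.\ $a_{m+n} \le a_m + a_n$, and then invoke Fekete's subadditive lemma to conclude $\lim_n a_n/n = \inf_n a_n/n$. Subadditivity follows by combining the sub-multiplicativity of $\theta$, which gives $\log\theta(\iii|_{m+n}) \le \log\theta(\iii|_m) + \log\theta(\sigma^m\iii|_n)$ pointwise, with the $\sigma$-invariance of $\nu$, which turns $\int \log\theta(\sigma^m\iii|_n)\dd\nu(\iii)$ into $a_n$ after integrating; one only needs to check integrability of $\log\theta(\iii|_n)$, which holds because $\theta$ takes values in $(0,\infty)$ on the finite set $\Sigma_n$, so $\log\theta(\iii|_n)$ is a bounded measurable function.

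Next, for the almost-everywhere statement when $\nu$ is ergodic, the plan is to apply the Kingman subadditive ergodic theorem to the sequence of functions $f_n(\iii) = \log\theta(\iii|_n)$. Sub-multiplicativity of $\theta$ gives exactly the subadditive cocycle condition $f_{m+n} \le f_m + f_n\circ\sigma^m$, and $f_1$ is bounded hence integrable, so Kingman's theorem yields $\lim_n f_n(\iii)/n = \lim_n a_n/n = \Lambda(\theta,\nu)$ for $\nu$-almost every $\iii$, using ergodicity to identify the limit with the constant $\Lambda(\theta,\nu)$ rather than a $\sigma$-invariant function.

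Finally, for upper semi-continuity of $\nu\mapsto\Lambda(\theta,\nu)$ on $\MM_\sigma(\Sigma)$ in the weak$^*$ topology, I would use the infimum formula: for each fixed $n$, the map $\nu\mapsto \frac1n\int_\Sigma \log\theta(\iii|_n)\dd\nu(\iii)$ is weak$^*$ continuous because $\iii\mapsto\log\theta(\iii|_n)$ is a continuous (indeed locally constant, as it depends only on $\iii|_n$) bounded function on the compact space $\Sigma$. Since $\Lambda(\theta,\cdot)$ is the infimum of this family of continuous functions, it is upper semi-continuous. The main obstacle is minor and bookkeeping in nature: one must be careful that $\log\theta$ is well-behaved (finite, measurable, and bounded on each $\Sigma_n$) so that all integrals are defined and the interchange of integration with the pointwise sub-multiplicativity inequality is legitimate; beyond that, each part is a direct invocation of Fekete's lemma, Kingman's theorem, and the definition of weak$^*$ convergence respectively.
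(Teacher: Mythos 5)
Your proposal is correct and follows essentially the same route as the paper: subadditivity of $n \mapsto \int_\Sigma \log\theta(\iii|_n)\dd\nu(\iii)$ via sub-multiplicativity and $\sigma$-invariance, then Fekete's lemma, Kingman's subadditive ergodic theorem for the ergodic statement, and upper semi-continuity as an infimum of the weak$^*$ continuous maps $\nu \mapsto \frac{1}{n}\sum_{\iii \in \Sigma_n}\nu([\iii])\log\theta(\iii)$. The extra bookkeeping you mention (finiteness and boundedness of $\log\theta$ on each $\Sigma_n$) is exactly the routine verification the paper leaves implicit.
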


\begin{proof}
  The sequence $(\int_\Sigma \log\theta(\iii|_n)\dd\nu(\iii))_{n \in \N}$ is sub-additive and therefore, by Fekete's Lemma, $\Lambda(\theta,\nu)$ exists and is equal to
  \begin{equation*}
    \inf_{n\in\N}\frac{1}{n}\int_\Sigma \log\theta(\iii|_n)\dd\nu(\iii).
  \end{equation*}
  The second claim follows from Kingman's ergodic theorem; see e.g.\ \cite[Theorem 10.1]{Walters1982}. The third claim is a direct consequence of the first claim as each $\nu \mapsto \frac{1}{n}\sum_{\iii \in \Sigma_n} \nu([\iii])\log\theta(\iii)$ is continuous.
\end{proof}

Let $(A_1,\ldots,A_N) \in GL_2(\R)^N$ and write $A_\iii = A_{i_1} \cdots A_{i_n}$ for all $\iii = i_1 \cdots i_n \in \Sigma_n$ and $n \in \N$. By $A_\iii^{-1}$ we mean $(A_\iii)^{-1}$. For $\iii\in\Sigma_*$ we define $\alpha_1(\iii)$ and $\alpha_2(\iii)$ to be the lengths of the major and minor semi-axis of the ellipse $A_\iii(D)$ respectively, where $D \subset \R^2$ is the unit disc. Note that $\alpha_1(\iii) = \|A_\iii\|$ and $\alpha_2(\iii) = \|A_\iii^{-1}\|^{-1}$ for all $\iii \in \Sigma_*$, where $\|\cdot\|$ is the Euclidean operator norm. The potential $\iii \mapsto \alpha_1(\iii)$ is thus sub-multiplicative and $\iii \mapsto \alpha_2(\iii)$ is super-multiplicative. We define the \emph{Lyapunov exponents} of $\nu \in \MM_\sigma(\Sigma)$ by
\begin{align*}
  \lambda_1(\nu) &= \Lambda(\alpha_1,\nu) = \inf_{n\in\N} \frac{1}{n} \int_\Sigma \log \alpha_1(\iii|_{n}) \dd\nu(\iii), \\
  \lambda_2(\nu) &= -\Lambda(1/\alpha_2,\nu) = \sup_{n\in\N} \frac{1}{n}\int_\Sigma \log \alpha_2(\iii|_{n}) \dd\nu(\iii).
\end{align*}
Note that $\lambda_2(\nu) \le \lambda_1(\nu) \le 0$. Recall that the \emph{entropy} of $\nu \in \MM_\sigma(\Sigma)$ is
\begin{equation*}
  h(\nu) = -\lim_{n \to \infty} \frac{1}{n} \sum_{\iii \in \Sigma_n} \nu([\iii]) \log\nu([\iii]) = \inf_{n \in \N} -\frac{1}{n} \sum_{\iii \in \Sigma_n} \nu([\iii]) \log\nu([\iii]).
\end{equation*}
We say that a fully supported measure $\mu \in \MM(\Sigma)$ is \emph{sub-multiplicative} if the potential $\iii \mapsto \mu([\iii])$ is sub-multiplicative. The other definitions on potentials can be used with measures in a similar manner. Almost-multiplicative measures are more commonly known as \emph{quasi-Bernoulli} measures, while multiplicative measures are called \emph{Bernoulli} measures. The \emph{cross-entropy} of a sub-multiplicative measure $\mu$ relative to $\nu \in \MM_\sigma(\Sigma)$ is defined to be
\begin{equation*}
  h(\mu,\nu) = -\Lambda(\mu,\nu) = \sup_{n \in \N} -\frac{1}{n} \sum_{\iii \in \Sigma_n} \nu([\iii]) \log\mu([\iii]).
\end{equation*}

\begin{lemma} \label{thm:entropy-crossentropy}
  If $\nu \in \MM_\sigma(\Sigma)$ and $\mu \in \MM(\Sigma)$ is sub-multiplicative, then
  \begin{equation*}
    0 \le h(\nu) \le h(\mu,\nu).
  \end{equation*}
\end{lemma}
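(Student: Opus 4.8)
The plan is to reduce the statement to two elementary facts about finite probability vectors, applied at each level $n$, and then to match the $\inf$/$\sup$ already recorded in the definitions of $h(\nu)$ and $h(\mu,\nu)$.

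First I would dispose of the inequality $0 \le h(\nu)$. For each $n \in \N$ and each $\iii \in \Sigma_n$ we have $\nu([\iii]) \in [0,1]$, so $-\nu([\iii])\log\nu([\iii]) \ge 0$ under the usual convention $0\log 0 = 0$; hence $-\tfrac1n\sum_{\iii\in\Sigma_n}\nu([\iii])\log\nu([\iii]) \ge 0$ for every $n$, and taking the infimum over $n$ (which, by the recalled formula for entropy, equals $h(\nu)$) gives $h(\nu) \ge 0$.

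For the main inequality $h(\nu) \le h(\mu,\nu)$, I would fix $n \in \N$ and regard $\iii \mapsto \nu([\iii])$ and $\iii \mapsto \mu([\iii])$ as two probability vectors indexed by the finite set $\Sigma_n$: both sum to $1$ because $\{[\iii] : \iii \in \Sigma_n\}$ is a partition of $\Sigma$, and the second is strictly positive on $\Sigma_n$ since a sub-multiplicative measure is by definition fully supported (so $\log\mu([\iii])$ is finite). Gibbs' inequality — equivalently, nonnegativity of the Kullback--Leibler divergence, which is just Jensen's inequality for the convex function $-\log$ — then yields
\[
  -\frac1n\sum_{\iii\in\Sigma_n}\nu([\iii])\log\nu([\iii]) \;\le\; -\frac1n\sum_{\iii\in\Sigma_n}\nu([\iii])\log\mu([\iii]).
\]
Since $\mu$ is sub-multiplicative, the potential $\iii\mapsto\mu([\iii])$ is sub-multiplicative, so Lemma \ref{thm:fekete} identifies $\Lambda(\mu,\nu)$ with an infimum over $n$, whence $h(\mu,\nu) = -\Lambda(\mu,\nu)$ is the supremum over $n$ of the right-hand side above. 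Combining this with $h(\nu) = \inf_n(\cdot)$ on the left, the single level-$n$ comparison closes the chain: $h(\nu) \le -\tfrac1n\sum_{\iii\in\Sigma_n}\nu([\iii])\log\nu([\iii]) \le -\tfrac1n\sum_{\iii\in\Sigma_n}\nu([\iii])\log\mu([\iii]) \le h(\mu,\nu)$.

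The only point requiring any care is bookkeeping — making sure the $\inf$ in $h(\nu)$ and the $\sup$ in $h(\mu,\nu)$ point in the directions that let a single level-$n$ inequality suffice; Lemma \ref{thm:fekete} is exactly what guarantees this for the cross-entropy. There is no analytic obstacle: the substance of the lemma is the one-line Gibbs inequality together with nonnegativity of $-x\log x$ on $[0,1]$.
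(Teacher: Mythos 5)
Your proposal is correct and follows essentially the same route as the paper: a level-$n$ comparison of $-\sum_{\iii\in\Sigma_n}\nu([\iii])\log\nu([\iii])$ with $-\sum_{\iii\in\Sigma_n}\nu([\iii])\log\mu([\iii])$, followed by the $\inf$/$\sup$ bookkeeping from the definitions. The only cosmetic difference is that you invoke Gibbs' inequality (nonnegativity of relative entropy) as a black box, whereas the paper proves that same inequality in place via $\log x \le x-1$.
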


\begin{proof}
  Since $-x\log x \ge 0$ for all $x \in (0,1]$, we have $h(\nu) \ge 0$. To show the other inequality, fix $n \in \N$ and, as $\log x \le x-1$ for all $x>0$, note that
  \begin{align*}
    -\sum_{\iii \in \Sigma_n} \nu([\iii]) \log\mu([\iii]) &= -\sum_{\iii \in \Sigma_n} \nu([\iii]) \log\frac{\mu([\iii])}{\nu([\iii])} - \sum_{\iii \in \Sigma_n} \nu([\iii]) \log\nu([\iii]) \\ 
    &\ge \sum_{\iii \in \Sigma_n} \nu([\iii]) \biggl(1-\frac{\mu([\iii])}{\nu([\iii])}\biggr) - \sum_{\iii \in \Sigma_n} \nu([\iii]) \log\nu([\iii]) \\ 
    &= -\sum_{\iii \in \Sigma_n} \nu([\iii]) \log\nu([\iii]).
  \end{align*}
  The claim follows by taking supremum and infimum.
\end{proof}

The \emph{Lyapunov dimension} of a measure $\nu \in \MM_\sigma(\Sigma)$ is given by
\begin{equation*}
  \diml(\nu) = \min\biggl\{ -\frac{h(\nu)}{\lambda_1(\nu)}, 1-\frac{h(\nu)+\lambda_1(\nu)}{\lambda_2(\nu)}, -\frac{2h(\nu)}{\lambda_1(\nu)+\lambda_2(\nu)} \biggr\}.
\end{equation*}
Finally, the \emph{Lyapunov cross-dimension} $\diml(\mu,\nu)$ of a sub-multiplicative measure $\mu$ relative to $\nu \in \MM_\sigma(\Sigma)$ is
\begin{equation*}
  \diml(\mu,\nu) = \min\biggl\{ -\frac{h(\mu,\nu)}{\lambda_1(\nu)}, 1-\frac{h(\mu,\nu)+\lambda_1(\nu)}{\lambda_2(\nu)}, -\frac{2h(\mu,\nu)}{\lambda_1(\nu)+\lambda_2(\nu)} \biggr\}.
\end{equation*}
In other words, the Lyapunov cross-dimension is obtained by replacing the entropy $h(\nu)$ in the definition of the Lyapunov dimension by the cross-entropy $h(\mu,\nu)$. Therefore, by Lemma \ref{thm:entropy-crossentropy}, we always have $\diml(\nu) \le \diml(\mu,\nu)$.

\subsection{Domination} \label{sec:domination}

We say that $\mathsf{A} = (A_1,\ldots,A_N) \in GL_2(\R)^N$ is \emph{dominated} if there exist constants $C>0$ and $0<\tau<1$ such that
\begin{equation*}
  \alpha_2(\iii) \le C\tau^n\alpha_1(\iii)
\end{equation*}
for all $\iii \in \Sigma_n$ and $n \in \N$. Note that if $\A$ is dominated, then $\lambda_2(\nu) < \lambda_1(\nu)$ for all $\nu \in \MM_\sigma(\Sigma)$. Let $\RP$ denote the real projective line, which is the set of straight lines going through the origin in $\R^2$ and which we identify with $[0,\pi)$. We call a proper subset $\CC \subset \RP$ a \emph{multicone} if it is a finite union of closed projective intervals. We say that a multicone $\CC \subset \RP$ is \emph{strongly invariant} for $\A$ if $A_i\CC \subset \CC^o$ for all $i \in \{1,\ldots,N\}$, where $\CC^o$ is the interior of $\CC$. For example, the union of the first and third quadrants is strongly invariant for any tuple of positive matrices. Notice that if a multicone $\CC \subset \RP$ is strongly invariant for $\A = (A_1,\ldots,A_N)$, then the closure of $\RP \setminus \CC$ is a strongly invariant multicone for $(A_1^{-1}\ldots,A_N^{-1})$. By \cite[Theorem B]{BochiGourmelon2009}, $\A$ has strongly invariant multicone if and only if $\A$ is dominated. Therefore, if $\A$ is dominated, then the tuple $(A_1^{-1}\ldots,A_N^{-1})$ of inverses has a strongly invariant multicone. Furthermore, if $\A$ is dominated, then \cite[Lemma 2.2]{BochiMorris2015} (see also \cite[Corollary 2.4]{BaranyKaenmakiMorris2018}) implies that the potential $\iii \mapsto \alpha_1(\iii)$ is almost-multiplicative. Since $|\det(A_\iii)| = \alpha_1(\iii)\alpha_2(\iii)$ for all $\iii \in \Sigma_*$ and the determinant is multiplicative, it follows that $\iii \mapsto \alpha_2(\iii) = \alpha_1(\iii)^{-1}|\det(A_\iii)|$ is also almost-multiplicative.

\begin{lemma} \label{thm:almost-cont}
  Let $\theta$ be an almost-multiplicative potential and $\nu \in \MM_\sigma(\Sigma)$. If $\nu_k \to \nu$ in the weak$^*$ topology, then
  \begin{equation*}
    \lim_{k\to\infty} \Lambda(\theta,\nu_k) = \Lambda(\theta,\nu).
  \end{equation*}
\end{lemma}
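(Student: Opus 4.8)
The plan is to deduce everything from the upper semi-continuity assertion of Lemma~\ref{thm:fekete}, applied once to $\theta$ and once to $1/\theta$.

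First I would fix, using almost-multiplicativity, a constant $C \ge 1$ such that $C\theta$ is sub-multiplicative and $C^{-1}\theta$ is super-multiplicative; the latter means exactly that $C/\theta$ is itself a sub-multiplicative potential. For any $\nu \in \MM_\sigma(\Sigma)$ and $n \in \N$ one has
\[
  \frac{1}{n}\int_\Sigma \log\bigl((C\theta)(\iii|_n)\bigr)\dd\nu(\iii) = \frac{\log C}{n} + \frac{1}{n}\int_\Sigma \log\theta(\iii|_n)\dd\nu(\iii),
\]
and since $\Lambda(C\theta,\nu)$ exists by Lemma~\ref{thm:fekete} while $\log C/n \to 0$, the limit $\lim_{n\to\infty}\tfrac1n\int_\Sigma\log\theta(\iii|_n)\dd\nu$ exists as well; this is the value $\Lambda(\theta,\nu)$, and in particular $\Lambda(\theta,\nu) = \Lambda(C\theta,\nu)$. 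The same computation applied to $C/\theta$ gives $\Lambda(C/\theta,\nu) = -\Lambda(\theta,\nu)$. In particular $\Lambda(\theta,\nu_k)$ is well defined for every $k$.

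Next, applying the last statement of Lemma~\ref{thm:fekete} to the sub-multiplicative potential $C\theta$ shows that $\nu \mapsto \Lambda(C\theta,\nu) = \Lambda(\theta,\nu)$ is upper semi-continuous in the weak$^*$ topology, whence $\limsup_{k\to\infty}\Lambda(\theta,\nu_k) \le \Lambda(\theta,\nu)$. Applying the same statement to the sub-multiplicative potential $C/\theta$ shows that $\nu \mapsto \Lambda(C/\theta,\nu) = -\Lambda(\theta,\nu)$ is upper semi-continuous, that is, $\nu \mapsto \Lambda(\theta,\nu)$ is lower semi-continuous, whence $\liminf_{k\to\infty}\Lambda(\theta,\nu_k) \ge \Lambda(\theta,\nu)$. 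Combining the two inequalities yields $\lim_{k\to\infty}\Lambda(\theta,\nu_k) = \Lambda(\theta,\nu)$.

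There is essentially no obstacle here: the only thing to verify is the elementary observation that rescaling a potential by a multiplicative constant changes $\tfrac1n\int\log(\cdot)\dd\nu$ only by the additive term $\tfrac{\log C}{n} \to 0$, so it affects neither the existence nor the value of $\Lambda$, which is precisely what lets us trade $\theta$ for $1/\theta$ at the cost of a single sign. All the semi-continuity content is already supplied by Lemma~\ref{thm:fekete}.
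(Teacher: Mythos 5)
Your proposal is correct and follows essentially the same route as the paper: both prove the upper bound by applying the upper semi-continuity assertion of Lemma \ref{thm:fekete} to the sub-multiplicative potential $C\theta$, and the lower bound by exploiting sub-multiplicativity of $C/\theta$ (the paper writes $\Lambda(C^{-1}\theta,\cdot)$ explicitly as a supremum of continuous functions via the Fekete representation, which is the same semi-continuity fact you invoke after negation). The observation that the constant $C$ washes out of $\frac1n\int\log(\cdot)\dd\nu$ is exactly the point the paper uses as well, so nothing is missing.
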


\begin{proof}
  By Lemma \ref{thm:fekete}, we have $\limsup_{k \to \infty} \Lambda(\theta,\nu_k) \le \Lambda(\theta,\nu)$. Since $C/\theta$ is sub-multiplicative for some $C \ge 1$, Lemma \ref{thm:fekete} implies that
  \begin{equation*}
    \Lambda(C^{-1}\theta,\nu) = -\Lambda(C/\theta,\nu) = \sup_{n \in \N} \biggl( \frac{1}{n} \int_\Sigma \log\theta(\iii|_n) \dd\nu(\iii) - \frac{1}{n} \log C \biggr).
  \end{equation*}
  Therefore, as each $\nu \mapsto \frac{1}{n}\sum_{\iii \in \Sigma_n} \nu([\iii])\log\theta(\iii)$ is continuous, we see that $\nu \mapsto \Lambda(C^{-1}\theta,\nu) = \Lambda(\theta,\nu)$ is lower semi-continuous and thus $\liminf_{k \to \infty} \Lambda(\theta,\nu_k) \ge \Lambda(\theta,\nu)$.
\end{proof}

Let $\A = (A_1,\ldots,A_N) \in GL_2(\R)^N$ be dominated and $\CC \subset \RP$ be a strongly invariant multicone for $\A$. The \emph{canonical projection} $\Pi \colon \Sigma \to \RP$ is defined by the relation
\begin{equation*}
  \{\Pi(\iii)\} = \bigcap_{n=1}^\infty A_{\overleftarrow{\iii|_n}}^{-1} \overline{\RP \setminus \CC}
\end{equation*}
and the set of \emph{Furstenberg directions} is the compact set
\begin{equation*}
  X_F = \bigcup_{\iii \in \Sigma} \Pi(\iii) = \bigcap_{n=1}^\infty \bigcup_{\iii \in \Sigma_n} A_{\overleftarrow{\iii}}^{-1} \overline{\RP \setminus \CC} \subset \overline{\RP \setminus \CC}.
\end{equation*}
Note that $X_F$ is uniformly perfect and hence has positive Hausdorff dimension unless it is a singleton. See \cite[Lemma 2.3]{AnttilaBaranyKaenmaki2024} for alternative characterizations of $X_F$. If $\nu \in \MM_\sigma(\Sigma)$ is ergodic, then the measure $\nu_F = \Pi_*\nu$ on $X_F$ is called the \emph{Furstenberg measure} with respect to $\nu$. Let $T \colon \Sigma \times \RP \to \Sigma \times \RP$ be the skew-product defined by
\begin{equation*}
  T(\iii,V) = (\sigma\iii,A_{\iii|_1}^{-1} V).
\end{equation*}
If $\nu$ is Bernoulli, the measure $\nu \times \nu_F$ on $\Sigma \times \RP$ is $T$-invariant and ergodic. If $\nu$ is only quasi-Bernoulli, then $\nu\times\nu_F$ may not be invariant, but is equivalent to a $T$-invariant ergodic measure; see \cite[Theorem 2.2]{BaranyKaenmaki2017} and \cite[Lemma 3.1]{KemptonFalconer}, respectively.

\subsection{Equilibrium States} \label{sec:equilibrium-states}

Let $\theta$ be a sub-multiplicative potential. We define the \emph{pressure} of $\theta$ by setting
\begin{equation*}
  P(\theta) = \lim_{n\to\infty} \frac{1}{n}\log\sum_{\iii\in\Sigma_n}\theta(\iii) = \inf_{n\in\N} \frac{1}{n}\log\sum_{\iii\in\Sigma_n}\theta(\iii).
\end{equation*}
As in Lemma \ref{thm:fekete}, the existence of the limit above and the equality are guaranteed by Fekete's Lemma. By \cite[Lemma 2.2]{KaenmakiReeve2014}, we see that
\begin{equation*}
  P(\theta) \ge h(\nu) + \Lambda(\theta,\nu)
\end{equation*}
for all $\nu \in \MM_\sigma(\Sigma)$. A measure $\nu \in \MM_\sigma(\Sigma)$ for which
\begin{equation*}
  P(\theta) = h(\nu) + \Lambda(\theta,\nu)
\end{equation*}
is called the \emph{equilibrium state} for $\theta$. If $\theta$ is an almost-multiplicative potential, then, by \cite[Theorem 5]{Mummert2006} (or \cite[Theorem 14]{Barreira2006}), there exists a unique equilibrium state for $\theta$ which furthermore is a fully supported quasi-Bernoulli measure.

\begin{lemma} \label{thm:eq-convergence}
 Let $(\theta_k)_{k\in\N}$ be a sequence of sub-multiplicative potentials and let $\nu_k \in \MM_\sigma(\Sigma)$ be an equilibrium state for $\theta_k$ for each $k \in \N$. If there exist a measure $\nu \in \MM_\sigma(\Sigma)$ and a sub-multiplicative potential $\theta$ such that $\nu_k \to \nu$ in the weak$^*$ topology and $\theta_k(\iii)^{1/|\iii|} \to \theta(\iii)^{1/|\iii|}$ uniformly in $\Sigma_*$ as $k \to \infty$, then
  \begin{equation*}
    \lim_{k \to \infty} \Lambda(\theta_k,\nu_k) - \Lambda(\theta,\nu_k) = 0
  \end{equation*}
  and $\nu$ is an equilibrium state for $\theta$.
\end{lemma}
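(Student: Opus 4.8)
The plan is to reduce both assertions to two elementary comparison estimates, exactly in the spirit of the preliminary results of Section \ref{sec:thermodynamic-formalism}. First I would fix the meaning of the hypothesis: writing
\[
  \delta_k:=\sup_{\iii\in\Sigma_*}\tfrac{1}{|\iii|}\bigl|\log\theta_k(\iii)-\log\theta(\iii)\bigr|,
\]
the uniform convergence $\theta_k(\iii)^{1/|\iii|}\to\theta(\iii)^{1/|\iii|}$ says precisely that $\delta_k\to 0$. (The two formulations agree because $\theta(\iii)^{1/|\iii|}$ is bounded: the upper bound $\theta(\iii)^{1/|\iii|}\le\max_i\theta(i)$ is immediate from sub-multiplicativity, and a uniform positive lower bound — needed to pass to logarithms — holds in every situation in which this lemma is applied.) From $e^{-|\iii|\delta_k}\theta(\iii)\le\theta_k(\iii)\le e^{|\iii|\delta_k}\theta(\iii)$ I would then extract, for \emph{every} $\mu'\in\MM_\sigma(\Sigma)$,
\[
  |\Lambda(\theta_k,\mu')-\Lambda(\theta,\mu')|\le\delta_k\qquad\text{and}\qquad|P(\theta_k)-P(\theta)|\le\delta_k,
\]
since in each case the relevant $\tfrac1n$-normalised quantity for $\theta_k$ differs from the one for $\theta$ by at most $\delta_k$ for every $n$, and one passes to the limit (resp.\ the infimum) in $n$ using Lemma \ref{thm:fekete}.

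Applying the first inequality with $\mu'=\nu_k$ gives the first assertion of the lemma immediately: $\Lambda(\theta_k,\nu_k)-\Lambda(\theta,\nu_k)\to 0$. For the second assertion I would use that $\nu_k$ is an equilibrium state for $\theta_k$ and rewrite
\[
  h(\nu_k)+\Lambda(\theta,\nu_k)=P(\theta_k)-\bigl(\Lambda(\theta_k,\nu_k)-\Lambda(\theta,\nu_k)\bigr),
\]
whose right-hand side tends to $P(\theta)$ by the two estimates above. Letting $k\to\infty$ on the left using upper semi-continuity — $\nu\mapsto\Lambda(\theta,\nu)$ is upper semi-continuous by Lemma \ref{thm:fekete}, and $\nu\mapsto h(\nu)$ is upper semi-continuous because it is the infimum of the weak$^*$-continuous functionals $\nu\mapsto-\tfrac1n\sum_{\iii\in\Sigma_n}\nu([\iii])\log\nu([\iii])$ — and recalling $\nu_k\to\nu$ weak$^*$, I obtain $P(\theta)\le h(\nu)+\Lambda(\theta,\nu)$. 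Since the opposite inequality $P(\theta)\ge h(\nu)+\Lambda(\theta,\nu)$ is the general variational bound from \cite[Lemma 2.2]{KaenmakiReeve2014} recalled in Section \ref{sec:equilibrium-states}, equality holds and $\nu$ is an equilibrium state for $\theta$.

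None of these steps is individually difficult; the part requiring genuine care is the very first one, namely justifying the logarithmic reformulation of the hypothesis so that the comparison estimates are uniform over $\iii\in\Sigma_*$ — concretely, ruling out that the equilibrium states $\nu_k$ place asymptotically non-negligible weight on the words where $\theta(\iii)^{1/|\iii|}$ is close to $0$. In the non-degenerate regime relevant here this is settled once and for all by the uniform positive lower bound on $\theta(\iii)^{1/|\iii|}$; the degenerate case in which some $\Lambda$ or $P$ equals $-\infty$ either makes the statement vacuous or goes through by the same argument with the obvious conventions, and I would dispatch it in a sentence. After that reduction the proof is short and uses no compactness beyond what is already contained in the assumption $\nu_k\to\nu$.
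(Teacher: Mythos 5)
Your argument is correct and follows essentially the same route as the paper's proof: convert the uniform convergence into the two-sided bound $e^{-|\iii|\delta_k}\theta(\iii)\le\theta_k(\iii)\le e^{|\iii|\delta_k}\theta(\iii)$, deduce $|\Lambda(\theta_k,\nu_k)-\Lambda(\theta,\nu_k)|\le\delta_k$ and $|P(\theta_k)-P(\theta)|\le\delta_k$, and then combine the equilibrium property of $\nu_k$ with upper semi-continuity of $h$ and of $\Lambda(\theta,\cdot)$ and the variational inequality $P(\theta)\ge h(\nu)+\Lambda(\theta,\nu)$. The only cosmetic differences are that you prove upper semi-continuity of entropy directly as an infimum of weak$^*$-continuous functionals where the paper cites Walters, and that you are more explicit than the paper about needing a uniform positive lower bound on $\theta(\iii)^{1/|\iii|}$ to pass from uniform convergence to the logarithmic estimate, a point the paper glosses over and which is harmless in all applications.
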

\begin{proof}
  Recall that, by \cite[Theorem 8.2]{Walters1982} and Lemma \ref{thm:fekete}, $\limsup_{k \to \infty} h(\nu_k) \le h(\nu)$ and $\limsup_{k \to \infty} \Lambda(\theta,\nu_k) \le \Lambda(\theta,\nu)$. If $\eps>0$, then the uniform convergence of $\theta_k$ implies that there exists $k_0 \in \N$ such that $\log\theta(\iii) - \eps|\iii| \le \log\theta_k(\iii) \le \log\theta(\iii) + \eps|\iii|$ for all $\iii \in \Sigma_*$ and
  \begin{equation*}
    \Lambda(\theta,\nu_k) - \eps \le \Lambda(\theta_k,\nu_k) \le \Lambda(\theta,\nu_k) + \eps
  \end{equation*}
  for all $k \ge k_0$. Therefore, $\lim_{k \to \infty} \Lambda(\theta_k,\nu_k) - \Lambda(\theta,\nu_k) = 0$ and
  \begin{align*}
    P(\theta) &= \lim_{k \to \infty} P(\theta_k) = \lim_{k \to \infty} h(\nu_k) + \Lambda(\theta_k,\nu_k) \\ 
    &\le \limsup_{k \to \infty} h(\nu_k) + \limsup_{k \to \infty} \Lambda(\theta,\nu_k) + \eps \\ 
    &\le h(\nu) + \Lambda(\theta,\nu) + \eps.
  \end{align*}
  By letting $\eps \downarrow 0$, we see that $\nu$ is an equilibrium state for $\theta$.
\end{proof}

Let $\mathsf{A} = (A_1,\ldots,A_N) \in GL_2(\R)^N$ be a tuple of contractive invertible matrices. For each $s \ge 0$, define a potential $\fii^s$ by setting
\begin{equation*}
  \fii^s(\iii) =
  \begin{cases}
    \alpha_1(\iii)^s, &\text{if } 0 \le s < 1, \\
    \alpha_1(\iii)\alpha_2(\iii)^{s-1}, &\text{if } 1 \le s < 2, \\ 
    |\det(A_\iii)|^{s/2}, &\text{if } 2 \le s < \infty,
  \end{cases}
\end{equation*}
for all $\iii \in \Sigma_*$. Since $\alpha(\iii)\alpha_2(\iii)^{s-1} = \alpha_1(\iii)^{2-s}|\det(A_\iii)|^{s-1}$, the \emph{singular value function} $\fii^s$ is sub-multiplicative. Therefore, the pressure $P(\fii^s)$ is well-defined for all $s \ge 0$. By \cite[Lemma 2.1]{KaenmakiVilppolainen2010}, the function $s \mapsto P(\fii^s)$ defined on $[0,\infty)$ is continuous and strictly decreasing, convex on intervals $[0,1]$, $[1,2]$, and $[2,\infty)$, and there exists a unique $s \ge 0$ such that $P(\fii^s)=0$. This unique value is called the \emph{affinity dimension}, denoted by $\dimaff(\fii^s)$.

If $\nu \in \MM_\sigma(\Sigma)$, then
\begin{equation*}
  \Lambda(\fii^s,\nu) =
  \begin{cases}
    s\lambda_1(\nu), &\text{if } 0 \le s < 1, \\
    \lambda_1(\nu)+(s-1)\lambda_2(\nu), &\text{if } 1 \le s < 2, \\ 
    \tfrac{s}{2}(\lambda_1(\nu)+\lambda_2(\nu)), &\text{if } 2 \le s < \infty,
  \end{cases}
\end{equation*}
where $\lambda_1(\nu)$ and $\lambda_2(\nu)$ are the Lyapunov exponents. It is straightforward to see that the Lyapunov dimension $\diml(\nu)$ is the unique $s \ge 0$ for which $h(\nu) + \Lambda(\fii^s,\nu) = 0$ or, equivalently,
\begin{equation} \label{eq:diml-min}
  s =
  \begin{cases}
    -\frac{h(\nu)}{\lambda_1(\nu)}, &\text{if } 0 \le s < 1, \\ 
    1-\frac{h(\nu)+\lambda_1(\nu)}{\lambda_2(\nu)}, &\text{if } 1 \le s < 2, \\ 
    -\frac{2h(\nu)}{\lambda_1(\nu)+\lambda_2(\nu)}, &\text{if } 2 \le s < \infty.
  \end{cases}
\end{equation}
Similarly, the Lyapunov cross-dimension $\diml(\mu,\nu)$ of a sub-multiplicative measure $\mu$ relative to $\nu \in \MM_\sigma(\Sigma)$ is the unique $s \ge 0$ for which $h(\mu,\nu) + \Lambda(\fii^s,\nu) = 0$ or, equivalently, $s$ is as in \eqref{eq:diml-min} with $h(\nu)$ replaced by $h(\mu,\nu)$.

By \cite[Theorems 2.6 and 4.1]{Kaenmaki2004}, there exists an ergodic equilibrium state $\nu$ for $\fii^s$. If $\A$ is dominated, then $\fii^s$ is almost-multiplicative and $\nu$ is quasi-Bernoulli. It is easy to see that an equilibrium state $\nu$ has maximal possible Lyapunov dimension, $\diml(\nu) = \max \{\diml(\eta) : \eta\in\MM_\sigma(\Sigma)\} = \dimaff(\fii^s)$.

\section{Differentiability of the Pressure} \label{sec:differentiability-of-the-pressure}

Let $(A_1,\ldots,A_N) \in GL_2(\R)^N$ be dominated and $\mu \in \MM(\Sigma)$ be a fully supported quasi-Bernoulli measure. For each $q \in \R$ and $s \ge 0$, following \cite{Falconer1999}, we consider the almost-multiplicative potential $\psi^{q,s}$ defined by
\begin{equation*}
  \psi^{q,s}(\iii) = \mu([\iii])^q \fii^s(\iii)^{1-q}.
\end{equation*}
If $\nu \in \MM_\sigma(\Sigma)$, then
\begin{equation*}
  \Lambda(\psi^{q,s},\nu) = -qh(\mu,\nu) + (1-q)\Lambda(\fii^s,\nu).
\end{equation*}
Since $\psi^{q,s}$ is almost-multiplicative, the pressure
\[
  P(\psi^{q,s})=\lim_{n\to\infty}\frac{1}{n}\log\sum_{\iii\in\Sigma_n}\mu([\iii])^q\varphi^s(\iii)^{1-q}
\]
is well-defined and there exists a unique ergodic equilibrium state for $\psi^{q,s}$, which is moreover a fully supported quasi-Bernoulli measure. 

In this section, we prove that the function $(q,s) \mapsto P(\psi^{q,s})$ is differentiable on $\R \times (0,\infty) \setminus \{1,2\}$ and compute its partial derivatives. The differentiability of the pressure with respect to several real parameters in the diagonal case for almost-multiplicative potentials is addressed in \cite{BarralFeng2012}. We begin by collecting some elementary properties of the pressure function.

\begin{lemma} \label{thm:pressure-convex}
  If $(A_1,\ldots,A_N) \in GL_2(\R)^N$ is a dominated tuple of contractive matrices and $\mu \in \MM(\Sigma)$ is a fully supported quasi-Bernoulli measure, then the following seven properties hold:
  \begin{enumerate}[(1)]
    \item The function $(q,s) \mapsto P(\psi^{q,s})$ is continuous on $\R \times [0,\infty)$.
    \item For each $q<1$, the function $s \mapsto P(\psi^{q,s})$ is strictly decreasing with $P(\psi^{q,0}) \ge 0$ and $\lim_{s\to\infty}P(\psi^{q,s})=-\infty$.
    \item For each $q>1$, the function $s \mapsto P(\psi^{q,s})$ is strictly increasing with $P(\psi^{q,0}) \le 0$ and $\lim_{s\to\infty}P(\psi^{q,s})=\infty$.
    \item For each $q \ne 1$, there exists a unique $s_q \in [0,\infty)$ so that $P(\psi^{q,s_q})=0$.
    \item The function $q \mapsto s_q$ is continuous on $\R \setminus \{1\}$.
    \item For each $q \in \R$, the function $s \mapsto P(\psi^{q,s})$ is convex on connected components of $[0,\infty) \setminus \{1,2\}$.
    \item For each $s \in [0,\infty) \setminus \{1,2\}$, the function $q \mapsto P(\psi^{q,s})$ is convex on $\R$.
  \end{enumerate}
\end{lemma}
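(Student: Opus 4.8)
The seven properties are all consequences of three facts about the pressure of an almost-multiplicative potential: monotonicity and strict monotonicity in the exponent, continuity, and convexity of $t \mapsto P(\theta_1^{t}\theta_2^{1-t})$ when $\theta_1,\theta_2$ are almost-multiplicative (i.e.\ the pressure is convex along affine families of exponents in log-scale, which follows from H\"older's inequality applied to the sums $\sum_{\iii \in \Sigma_n}$). I would organize the proof so that properties (6) and (7) are proved first, since they are purely about convexity of the one-variable slices and do not need $X$ to be dominated beyond ensuring $\psi^{q,s}$ is almost-multiplicative; then (1) follows from convexity in each variable separately plus finiteness, giving joint continuity on $\R \times [0,\infty)$; then (2)--(3) use strict monotonicity; then (4) is an intermediate value argument; and (5) follows from (1) together with strict monotonicity and an implicit-function-type argument.

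First I would prove (7). Fix $s \notin \{1,2\}$. On each of the intervals $[0,1)$, $(1,2)$, $(2,\infty)$ the potential $\fii^s$ is literally $\alpha_1^s$, $\alpha_1\alpha_2^{s-1}$, or $|\det|^{s/2}$, so $\psi^{q,s}(\iii) = \mu([\iii])^q \fii^s(\iii)^{1-q}$ can be written as $\mu([\iii])^q (\text{fixed multiplicative or almost-multiplicative factor})^{1-q}$, and in particular for $q = tq_0 + (1-t)q_1$ we get $\psi^{q,s}(\iii) = \psi^{q_0,s}(\iii)^{t}\psi^{q_1,s}(\iii)^{1-t}$ exactly (the exponents of both $\mu([\iii])$ and $\fii^s(\iii)$ are affine in $q$). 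Applying H\"older's inequality with exponents $1/t$ and $1/(1-t)$ to $\sum_{\iii \in \Sigma_n}\psi^{q_0,s}(\iii)^t\psi^{q_1,s}(\iii)^{1-t}$ gives
\begin{equation*}
  \tfrac1n\log\sum_{\iii \in \Sigma_n}\psi^{q,s}(\iii) \le t\cdot\tfrac1n\log\sum_{\iii \in \Sigma_n}\psi^{q_0,s}(\iii) + (1-t)\cdot\tfrac1n\log\sum_{\iii \in \Sigma_n}\psi^{q_1,s}(\iii),
\end{equation*}
and letting $n \to \infty$ yields convexity of $q \mapsto P(\psi^{q,s})$ on all of $\R$. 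For (6) the same computation works: on each component of $[0,\infty)\setminus\{1,2\}$ the map $s \mapsto \log\fii^s(\iii)$ is affine in $s$ for each fixed $\iii$ (respectively $s\log\alpha_1(\iii)$, $\log\alpha_1(\iii)+(s-1)\log\alpha_2(\iii)$, $\tfrac s2\log|\det A_\iii|$), so $\psi^{q,s}$ factors as a H\"older product along affine $s$-families and the same argument gives convexity of $s \mapsto P(\psi^{q,s})$ on each component.

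Next, monotonicity: for $q<1$ the exponent $1-q>0$, and since $\fii^s(\iii) \le \fii^{s'}(\iii)\cdot c^{s-s'}$-type comparisons hold (more simply, $s\mapsto \fii^s(\iii)$ is strictly decreasing because $\alpha_1(\iii),\alpha_2(\iii)\in(0,1)$ are bounded away from $1$ uniformly over $i$ by contractivity, with domination giving $\alpha_2(\iii)\le C\tau^n\alpha_1(\iii)$), we get $\psi^{q,s}(\iii)$ strictly decreasing in $s$ with a uniform geometric rate, hence $s\mapsto P(\psi^{q,s})$ strictly decreasing; at $s=0$, $\psi^{q,0}(\iii) = \mu([\iii])^q$ and $\sum_{\iii\in\Sigma_n}\mu([\iii])^q \ge (\sum_{\iii}\mu([\iii]))^q\cdot(\#\Sigma_n)^{1-q} = N^{(1-q)n}\ge 1$ (for $q<1$, by the power-mean/Jensen inequality on the simplex), giving $P(\psi^{q,0})\ge 0$; and $P(\psi^{q,s}) \le P(\fii^{s(1-q)/1})$-type bounds together with $\fii^s \to 0$ geometrically give $P(\psi^{q,s})\to-\infty$. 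Property (3) is the mirror image with $1-q<0$. Property (4) is then immediate from (2)--(3) and continuity in $s$ (from (6) plus finiteness, or from \cite[Lemma 2.1]{KaenmakiVilppolainen2010}-style arguments) via the intermediate value theorem, noting $P(\psi^{q,0})$ and $\lim_{s\to\infty}P(\psi^{q,s})$ have opposite signs. For (1), joint continuity on $\R\times[0,\infty)$: each slice is convex (on $[0,1]$, $[1,2]$, $[2,\infty)$ in $s$; on all of $\R$ in $q$) and finite, and a separately-convex, locally-bounded function on a product of intervals is continuous; local boundedness follows from the explicit bounds $\sum_{\iii\in\Sigma_n}\psi^{q,s}(\iii)\le N^n\sup_{\iii}\psi^{q,s}(\iii)$ together with $\psi^{q,s}(\iii)^{1/n}$ being bounded uniformly on compact $(q,s)$-sets. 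Finally (5): $q\mapsto s_q$ is the implicit solution of $P(\psi^{q,s})=0$; since $P$ is jointly continuous and strictly monotone in $s$ on each side of $q=1$, a standard argument (if $s_{q_k}\to s_\infty$ along a subsequence then $P(\psi^{q,s_\infty})=0$ by continuity, forcing $s_\infty=s_q$ by uniqueness) gives continuity on $\R\setminus\{1\}$, once one also checks $s_q$ stays in a bounded range locally, which follows from the geometric bounds above.

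**Main obstacle.** The delicate points are (i) getting the strict monotonicity in (2)--(3) with enough uniformity to conclude $P(\psi^{q,s})\to\pm\infty$ and strict (not just weak) monotonicity of the pressure — this needs the contractivity of the $A_i$ and domination to control $\alpha_1(\iii),\alpha_2(\iii)$ uniformly, and care because $\fii^s$ changes formula at $s=1,2$; and (ii) deducing joint continuity (1) from separate convexity — one must either invoke the standard theorem that separately convex functions that are locally bounded are continuous, or argue directly, and one must supply the local boundedness, which again rests on uniform control of $\psi^{q,s}(\iii)^{1/|\iii|}$. The H\"older-inequality convexity arguments for (6) and (7) are routine once one notices the exponents are affine in the relevant variable on each piece.
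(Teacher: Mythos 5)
Your overall route largely coincides with the paper's: you prove (6)--(7) by H\"older's inequality using that the exponents are affine in the relevant variable on each piece (with $\lceil s\rceil=\lceil t\rceil$), and (2)--(4) by uniform per-symbol comparisons coming from contractivity and the quasi-Bernoulli property; this is exactly the paper's mechanism. Two steps, however, do not work as written. First, the inequality $\sum_{\iii\in\Sigma_n}\mu([\iii])^q\ge(\#\Sigma_n)^{1-q}\bigl(\sum_{\iii\in\Sigma_n}\mu([\iii])\bigr)^q$ is false for $0<q<1$: Jensen's inequality for the concave map $x\mapsto x^q$ gives the reverse bound $\sum_{\iii\in\Sigma_n}\mu([\iii])^q\le N^{n(1-q)}$. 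The conclusion $P(\psi^{q,0})\ge 0$ is still correct, but it should be obtained from $x^q\ge x$ for $x\in(0,1]$ when $0<q<1$ (so the sum is at least $1$), and from $\mu([\iii])^q\ge 1$ termwise when $q\le 0$.

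Second, and more substantially, your derivation of (1) from ``separately convex and locally bounded implies continuous'' only yields continuity on the open strips $\R\times(0,1)$, $\R\times(1,2)$ and $\R\times(2,\infty)$. On the lines $s\in\{1,2\}$ (and at $s=0$) the $s$-slices are merely piecewise convex, and a finite convex function on a closed interval can fail to be continuous at an endpoint (it may jump upwards there), so continuity of $(q,s)\mapsto P(\psi^{q,s})$ across the break points does not follow from convexity plus local boundedness. What is needed there is a quantitative two-sided comparison such as $\lalpha^{(s'-s)|\iii|}\le\fii^{s'}(\iii)/\fii^{s}(\iii)\le\ualpha^{(s'-s)|\iii|}$ for $s'\ge s$, valid across $s=1,2$ with $\lalpha=\min_i\alpha_2(i)$ and $\ualpha=\max_i\alpha_1(i)$, combined with $K^{-|p-q||\iii|}\le\mu([\iii])^{q-p}\le K^{|p-q||\iii|}$; this yields a local Lipschitz estimate for the pressure on all of $\R\times[0,\infty)$, which is precisely how the paper proves (1), and then (2)--(5) are read off from the same estimate. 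You already have these comparisons in your monotonicity argument, so the gap is repairable, but as stated the continuity step would fail exactly at $s\in\{0,1,2\}$, and (5) inherits this issue since your implicit-function argument relies on (1).
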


\begin{proof}
  Although the proof is a straightforward modification of \cite[Lemma 2.1]{KaenmakiVilppolainen2010}, we include the details for the convenience of the reader. We prove the claims only for $s \in [0,2)$; the case $s \ge 2$ is left to the reader. Let $p,q \in \R$ and $s,t \in [0,2)$. Write
  \begin{equation*}
    \lalpha = \min_{i \in \{1,\ldots,N\}} \alpha_2(i), \quad
    \ualpha = \max_{i \in \{1,\ldots,N\}} \alpha_1(i), \quad
    K = \max_{i \in \{1,\ldots,N\}} C\mu([i])^{-1},
  \end{equation*}
  where $C \ge 1$ is the constant from the quasi-Bernoulli assumption. Then $0 < \lalpha \le \ualpha < 1 < K$. Furthermore, let
  \begin{equation*}
    \ualpha(q,s,t) =
    \begin{cases}
      \ualpha, &\text{if } (1-q)(s-t) \ge 0, \\ 
      \lalpha, &\text{if } (1-q)(s-t) < 0,
    \end{cases}
  \end{equation*}
  and
  \begin{equation*}
    \lalpha(q,s,t) =
    \begin{cases}
      \lalpha, &\text{if } (1-q)(s-t) \ge 0, \\ 
      \ualpha, &\text{if } (1-q)(s-t) < 0.
    \end{cases}
  \end{equation*}
  Then we have $K^{-|\iii|} \le \mu([\iii]) \le K^{|\iii|}$ and
  \begin{equation*}
    \fii^t(\iii)^{1-q} \lalpha(q,s,t)^{(s-t)(1-q)|\iii|} \le \fii^s(\iii)^{1-q} \le \fii^t(\iii)^{1-q} \ualpha(q,s,t)^{(s-t)(1-q)|\iii|}
  \end{equation*}
  for all $\iii \in \Sigma_*$. Since $\lalpha^{t|\iii|} \le \fii^t(\iii) \le \ualpha^{t|\iii|} \le \lalpha^{-t|\iii|}$, we have
  \begin{equation*}
    \fii^t(\iii)^{1-p} \lalpha^{t|p-q||\iii|} \le \fii^t(\iii)^{1-q} \le \fii^t(\iii)^{1-p} \lalpha^{-t|p-q||\iii|}
  \end{equation*}
  for all $\iii \in \Sigma_*$. As $K^{-|p-q||\iii|} \le \mu([\iii])^{q-p} \le K^{|p-q||\iii|}$ for all $\iii \in \Sigma_*$, we see that
  \begin{equation} \label{eq:psi-upperbound}
  \begin{split}
    \psi^{q,s}(\iii) &\le \mu([\iii])^p \mu([\iii])^{q-p} \fii^t(\iii)^{1-q} \ualpha(q,s,t)^{(s-t)(1-q)|\iii|} \\ 
    &\le \psi^{p,t}(\iii) K^{|p-q||\iii|} \lalpha^{-t|p-q||\iii|} \ualpha(q,s,t)^{(s-t)(1-q)|\iii|}
  \end{split}
  \end{equation}
  and, similarly,
  \begin{equation} \label{eq:psi-lowerbound}
    \psi^{q,s}(\iii) \ge \psi^{p,t}(\iii) K^{-|p-q||\iii|} \lalpha^{t|p-q||\iii|} \lalpha(q,s,t)^{(s-t)(1-q)|\iii|}
  \end{equation}
  for all $\iii \in \Sigma_*$. It follows that
  \begin{align*}
    -|p-q|\log K &+ t|p-q|\log\lalpha + (s-t)(1-q)\log\lalpha(q,s,t) \\ 
    &\le P(\psi^{q,s}) - P(\psi^{p,t}) \\ 
    &\le |p-q|\log K - t|p-q|\log\lalpha + (s-t)(1-q)\log\ualpha(q,s,t)
  \end{align*}
  and hence the function $(q,s) \mapsto P(\psi^{q,s})$ is continuous on $\R \times [0,2)$. This proves (1). In particular, if $q < 1$ and $s>t$, then the above estimate shows that $P(\psi^{q,s}) - P(\psi^{q,t}) \leq (s-t)(1-q)\log\ualpha(q,s,t) = (s-t)(1-q)\log\ualpha < 0$, and if $q > 1$ and $s>t$, we get $P(\psi^{q,s}) - P(\psi^{q,t}) \geq (s-t)(1-q)\log\ualpha > 0$. These observations give (2)--(4). Notice that (5) follows immediately from (1).

  It remains to prove (6) and (7). Fix $p,q \in \R$ and $0 < \lambda < 1$. Let $s,t \in [0,2) \setminus \{1\}$ be such that $\lceil s \rceil = \lceil t \rceil$. Since $\fii^{\lambda t + (1-\lambda)s}(\iii) = \fii^t(\iii)^\lambda \fii^s(\iii)^{1-\lambda}$, we have
  \begin{align*}
    \mu([\iii])^q \fii^{\lambda t + (1-\lambda)s}(\iii)^{1-q} &= \mu([\iii])^q \fii^t(\iii)^{\lambda(1-q)} \fii^s(\iii)^{(1-\lambda)(1-q)} \\ 
    &= \Bigl( \mu([\iii])^q \fii^t(\iii)^{1-q} \Bigr)^\lambda \Bigl( \mu([\iii])^q \fii^s(\iii)^{1-q} \Bigr)^{1-\lambda}
  \end{align*}
  and
  \begin{align*}
    \psi^{\lambda p+(1-\lambda)q,s}(\iii) &= \mu([\iii])^{\lambda p+(1-\lambda)q} \fii^s(\iii)^{1-\lambda p-(1-\lambda)q} \\ 
    &= \Bigl( \mu([\iii])^p \fii^s(\iii)^{1-p} \Bigr)^\lambda \Bigl( \mu([\iii])^q \fii^s(\iii)^{1-q} \Bigr)^{1-\lambda}
  \end{align*}
  for all $\iii \in \Sigma_*$. Therefore, by H\"older's inequality, we see that
  \begin{equation*}
    \sum_{\iii \in \Sigma_n} \psi^{q,\lambda t + (1-\lambda)s}(\iii) \le \biggl( \sum_{\iii \in \Sigma_n} \psi^{q,t}(\iii) \biggr)^\lambda \biggl( \sum_{\iii \in \Sigma_n} \psi^{q,s}(\iii) \biggr)^{1-\lambda}
  \end{equation*}
  and
  \begin{equation*}
    \sum_{\iii \in \Sigma_n} \psi^{\lambda p+(1-\lambda)q,s}(\iii) \le \biggl( \sum_{\iii \in \Sigma_n} \psi^{p,s}(\iii) \biggr)^\lambda \biggl( \sum_{\iii \in \Sigma_n} \psi^{q,s}(\iii) \biggr)^{1-\lambda}
  \end{equation*}
  for all $n \in \N$. The claims follow by taking logarithms, dividing by $n$, and letting $n \to \infty$.
\end{proof}

Although the coordinate functions $q \mapsto P(\psi^{q,s})$ and $s \mapsto P(\psi^{q,s})$ are convex on certain intervals by Lemma \ref{thm:pressure-convex}(6) and (7), the function $(q,s) \mapsto P(\psi^{q,s})$ need not be convex. For example, if $0<\alpha_2<\alpha_1<1$ and
\begin{equation*}
  A =
  \begin{pmatrix}
    \alpha_1 & 0 \\ 
    0 & \alpha_2
  \end{pmatrix},
\end{equation*}
then the tuple $(A,\ldots,A) \in GL_2(\R)^N$ is dominated, and if $\mu \in \MM_\sigma(\Sigma)$ is the Bernoulli measure obtained from the probability vector $(\frac{1}{N},\ldots,\frac{1}{N})$, then
\begin{equation*}
  P(\psi^{q,s}) =
  \begin{cases}
    \log N^{1-q}\alpha_1^{(1-q)s}, &\text{if } 0 \le s < 1, \\ 
    \log N^{1-q}\alpha_1^{1-q}\alpha_2^{(1-q)(s-1)}, &\text{if } 1 \le s < 2.
  \end{cases}
\end{equation*}
In this case, the function $(q,s) \mapsto P(\psi^{q,s})$ is not convex because its Hessian is indefinite. Indeed, it is an antidiagonal matrix with
\begin{equation*}
  \begin{cases}
    -\log\alpha_1, &\text{if } 0 < s < 1, \\ 
    -\log\alpha_2, &\text{if } 1 < s < 2
  \end{cases}
\end{equation*}
in the antidiagonal.
    
Let us next study the differentiability of the pressure. We will first recall some basic facts from convex analysis. Let $U \subset \R$ be an open set and let $f \colon U \to \R$ be convex. It is well known that such a function is continuous. We say that $G \in \R$ is a \emph{sub-derivative} of $f$ at $x \in U$ if
\begin{equation*}
  f(y)-f(x) \ge G(y-x)
\end{equation*}
for all $y \in U$. It is straightforward to see that any sub-derivative is contained in $[f_-'(x),f_+'(x)]$, where $f_-'(x)$ and $f_+'(x)$ are the left and right derivatives of $f$ at $x$, respectively; see \cite[Theorem 23.2]{Rockafellar1970}. Therefore, $f$ is differentiable at $x$ if and only if it has only one sub-derivative at $x$; see \cite[Theorem 25.1]{Rockafellar1970}.

\begin{prop} \label{thm:pressure-gradient}
  Let $(A_1,\ldots,A_N) \in GL_2(\R)^N$ be a dominated tuple of contractive matrices and $\mu \in \MM(\Sigma)$ be a quasi-Bernoulli measure. If $(q_0,s_0) \in \R \times (0,\infty) \setminus \{1,2\}$ and $\nu$ is the equilibrium state for $\psi^{q_0,s_0}$, then the partial derivatives of $(q,s) \mapsto P(\psi^{q,s})$ are
  \begin{equation*}
    \partial_q P(\psi^{q,s_0})|_{q=q_0} = -h(\mu,\nu)-\Lambda(\fii^{s_0},\nu)
  \end{equation*}
  and
  \begin{equation*}
    \partial_s P(\psi^{q_0,s})|_{s=s_0} =
    \begin{cases}
      (1-q_0)\lambda_1(\nu), &\text{if } 0 < s_0 < 1, \\
      (1-q_0)\lambda_2(\nu), &\text{if } 1 < s_0 < 2, \\
      \frac12(1-q_0)(\lambda_1(\nu)+\lambda_2(\nu)), &\text{if } 2 < s_0 < \infty
    \end{cases}
  \end{equation*}
  provided that they exist.
\end{prop}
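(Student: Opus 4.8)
The plan is to combine the variational lower bound for the pressure with the elementary "sub-derivative squeeze'' from convex analysis recalled above. Recall from Section~\ref{sec:equilibrium-states} that for every sub-multiplicative potential $\theta$ and every $\nu \in \MM_\sigma(\Sigma)$ one has $P(\theta) \ge h(\nu) + \Lambda(\theta,\nu)$, with equality precisely when $\nu$ is an equilibrium state for $\theta$; and recall from Section~\ref{sec:differentiability-of-the-pressure} that $\Lambda(\psi^{q,s},\nu) = -q\,h(\mu,\nu) + (1-q)\Lambda(\fii^s,\nu)$. Since $\mu$ is quasi-Bernoulli and, by domination, $\fii^s$ is almost-multiplicative, the potential $\psi^{q,s}$ is almost-multiplicative, so the equilibrium state $\nu$ for $\psi^{q_0,s_0}$ exists and realizes the pressure: $P(\psi^{q_0,s_0}) = h(\nu) - q_0 h(\mu,\nu) + (1-q_0)\Lambda(\fii^{s_0},\nu)$. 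The key structural observation is that $q \mapsto \Lambda(\psi^{q,s_0},\nu)$ is affine in $q$, and $s \mapsto \Lambda(\psi^{q_0,s},\nu)$ is affine in $s$ on each connected component of $(0,\infty)\setminus\{1,2\}$ (because $s \mapsto \Lambda(\fii^s,\nu)$ is). Thus the bound coming from the single measure $\nu$ is an exact affine support at $(q_0,s_0)$ in each coordinate direction.

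For the $q$-derivative, freeze $s=s_0$ and put $g(q) = P(\psi^{q,s_0})$, which is finite and convex by Lemma~\ref{thm:pressure-convex}(1),(7) as $s_0 \notin \{1,2\}$. For every $q \in \R$ the variational inequality applied to $\nu$ gives
\begin{equation*}
  g(q) \ge h(\nu) - q\,h(\mu,\nu) + (1-q)\Lambda(\fii^{s_0},\nu) = g(q_0) + (q-q_0)\bigl(-h(\mu,\nu) - \Lambda(\fii^{s_0},\nu)\bigr),
\end{equation*}
using $g(q_0) = h(\nu) - q_0 h(\mu,\nu) + (1-q_0)\Lambda(\fii^{s_0},\nu)$. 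Hence $G := -h(\mu,\nu) - \Lambda(\fii^{s_0},\nu)$ is a sub-derivative of $g$ at $q_0$; dividing by $q-q_0$ and letting $q\downarrow q_0$ and $q\uparrow q_0$ yields $g_-'(q_0) \le G \le g_+'(q_0)$, so whenever the derivative exists it equals $G$, which is the asserted formula for $\partial_q P(\psi^{q,s_0})|_{q=q_0}$.

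For the $s$-derivative, freeze $q=q_0$ and put $h_s(s) = P(\psi^{q_0,s})$, which is finite and convex on the connected component $I$ of $(0,\infty)\setminus\{1,2\}$ containing $s_0$, by Lemma~\ref{thm:pressure-convex}(1),(6). On $I$ the map $s \mapsto \Lambda(\fii^s,\nu)$ is affine with slope $\ell$ equal to $\lambda_1(\nu)$, $\lambda_2(\nu)$, or $\tfrac12(\lambda_1(\nu)+\lambda_2(\nu))$ according as $I=(0,1)$, $(1,2)$, or $(2,\infty)$. Then for every $s\in I$,
\begin{equation*}
  h_s(s) \ge h(\nu) - q_0 h(\mu,\nu) + (1-q_0)\Lambda(\fii^s,\nu) = h_s(s_0) + (s-s_0)(1-q_0)\ell,
\end{equation*}
so $(1-q_0)\ell$ is a sub-derivative of $h_s$ at $s_0$, and the same one-sided squeeze shows that the derivative, if it exists, equals $(1-q_0)\ell$; reading off $\ell$ in the three cases gives the stated formula.

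I do not expect a genuine obstacle here: differentiability itself is hypothesized, so the whole content is identifying the value. The only points needing care are verifying that $\psi^{q,s}$ is almost-multiplicative (hence that an equilibrium state exists and attains the pressure at the base point), and keeping track of the fact that $\Lambda(\psi^{q,s},\nu)$ is affine in $q$ for fixed $s$ and affine in $s$ on each component of $(0,\infty)\setminus\{1,2\}$ for fixed $q$ — it is exactly this affineness that makes the lower bound from $\nu$ a tight linear support, forcing the (assumed) derivative to coincide with its slope. Convexity from Lemma~\ref{thm:pressure-convex} is convenient for framing but is not strictly needed beyond finiteness, since the one-sided difference-quotient squeeze only uses the support inequality and the assumed existence of the derivative.
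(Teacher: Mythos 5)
Your proposal is correct and follows essentially the same route as the paper: apply the variational inequality $P(\psi^{q,s}) \ge h(\nu) + \Lambda(\psi^{q,s},\nu)$ with the equilibrium state $\nu$ at the base point, use the affineness of $\Lambda(\psi^{q,s},\nu)$ in $q$ (and in $s$ on each component of $(0,\infty)\setminus\{1,2\}$) to exhibit the claimed value as a sub-derivative, and conclude via the one-sided difference-quotient squeeze since the derivative is assumed to exist. Your side remark that convexity is not strictly needed for this last step is accurate; the paper invokes it only as a convenient framework.
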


\begin{proof}
  We prove the result only for $(q_0,s_0) \in \R \times (0,2) \setminus \{1\}$; the case $(q_0,s_0) \in \R \times (2,\infty)$ is left to the reader. To simplify notation, we write $Q(q,s) = P(\psi^{q,s})$ for all $(q,s) \in \R \times (0,2) \setminus \{1\}$. Fix $(q_0,s_0) \in \R \times (0,2) \setminus \{1\}$ and let $\nu$ be the equilibrium state for $\psi^{q_0,s_0}$.

  Let us first assume that the partial derivative $\partial_q Q(q,s_0)|_{q=q_0}$ exists. Recall that, by Lemma \ref{thm:pressure-convex}(7), the function $q \mapsto Q(q,s_0)$ is convex. Since $\Lambda(\psi^{q,s},\nu) = -qh(\mu,\nu) + (1-q)\Lambda(\fii^s,\nu)$, we see that   
  \begin{align*}
    Q(q,s_0)-Q(q_0,s_0) &\geq h(\nu)+\Lambda(\psi^{q,s_0},\nu)-h(\nu)-\Lambda(\psi^{q_0,s_0},\nu) \\
    &= (-h(\mu,\nu)-\Lambda(\fii^{s_0},\nu))(q-q_0)
  \end{align*}
  for all $q \in \R$. Therefore, $-h(\mu,\nu)-\Lambda(\fii^{s_0},\nu)$ is a sub-derivative of the convex function $q \mapsto Q(q,s_0)$ at $q_0$. As the partial derivative $\partial_q Q(q,s_0)|_{q=q_0}$ exists, we have $\partial_q Q(q,s_0)|_{q=q_0} = -h(\mu,\nu)-\Lambda(\fii^{s_0},\nu)$.

  Let us then assume that the partial derivative $\partial_s Q(q_0,s)|_{s=s_0}$ exists. Recall that, by Lemma \ref{thm:pressure-convex}(6), the function $s \mapsto Q(q_0,s)$ is convex on connected components of $(0,2) \setminus \{1\}$. Since $\Lambda(\psi^{q,s},\nu) = -qh(\mu,\nu)+(1-q)\Lambda(\fii^s,\nu)$ and $\Lambda(\fii^s,\nu) = \Lambda(\fii^{s_0},\nu) + (s-s_0)\lambda_{\lceil s_0 \rceil}(\nu)$, we see that
  \begin{align*}
    Q(q_0,s)-Q(q_0,s_0) &\geq h(\nu)+\Lambda(\psi^{q_0,s},\nu)-h(\nu)-\Lambda(\psi^{q_0,s_0},\nu) \\
    &= (1-q_0)\lambda_{\lceil s_0 \rceil}(\nu)(s-s_0)
  \end{align*}
  for all $s \in (0,2) \setminus \{1\}$ with $\lceil s \rceil = \lceil s_0 \rceil$. Therefore, $(1-q_0)\lambda_{\lceil s_0 \rceil}(\nu)$ is a sub-derivative of the convex function $s \mapsto Q(q_0,s)$ at $s_0$. As the partial derivative $\partial_s Q(q_0,s)|_{s=s_0}$ exists, we have $\partial_s Q(q_0,s)|_{s=s_0} = (1-q_0)\lambda_{\lceil s_0 \rceil}(\nu)$.
\end{proof}

Let us next show that $(q,s) \mapsto P(\psi^{q,s})$ is differentiable on $\R \times (0,\infty) \setminus \{1,2\}$. This allows us to apply Proposition \ref{thm:pressure-gradient}. The proof uses tools from thermodynamic formalism. The following lemma allows us to employ Lemma \ref{thm:eq-convergence} in this setting.

\begin{lemma} \label{thm:psi-convergence}
  Let $(q_k,s_k)_{k \in \N}$ be a sequence of points in $\R \times (0,\infty) \setminus \{1,2\}$ converging to $(q,s) \in \R \times (0,\infty) \setminus \{1,2\}$. Then $\psi^{q_k,s_k}(\iii)^{1/|\iii|} \to \psi^{q,s}(\iii)^{1/|\iii|}$ uniformly in $\Sigma_*$ as $k \to \infty$.
\end{lemma}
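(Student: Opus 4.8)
The plan is to pass to logarithms, where $\psi^{q,s}$ becomes a linear combination of $\log\mu([\iii])$ and $\log\fii^s(\iii)$, and then exhibit a uniform Lipschitz estimate in the parameter $(q,s)$ together with uniform two-sided bounds on the normalized quantities; exponentiating at the end is harmless since $e^{(\cdot)}$ is Lipschitz on a bounded interval. For the empty word the claim is trivial, since $\psi^{q,s}(\varnothing)=\mu(\Sigma)^q\fii^s(\varnothing)^{1-q}=1$, so throughout I take $\iii\in\Sigma_n$ with $n\ge1$ and write $u(\iii)=\tfrac1n\log\mu([\iii])$, $v_s(\iii)=\tfrac1n\log\fii^s(\iii)$, and $g_{q,s}(\iii)=\tfrac1n\log\psi^{q,s}(\iii)=q\,u(\iii)+(1-q)\,v_s(\iii)$.

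First I would fix constants valid for the whole sequence. Since $(q_k,s_k)\to(q,s)$, both coordinate sequences are bounded, so choose $Q>0$ and $M\ge2$ with $|q_k|,|q|\le Q$ and $s_k,s\le M$ for all $k$. As in the proof of Lemma \ref{thm:pressure-convex}, with $\lalpha=\min_i\alpha_2(i)$, $\ualpha=\max_i\alpha_1(i)$ (so $0<\lalpha\le\ualpha<1$) and $K=\max_i C\mu([i])^{-1}\ge1$, one has $K^{-n}\le\mu([\iii])\le K^n$, hence $|u(\iii)|\le\log K$; and from $\lalpha^n\le\alpha_2(\iii)\le\alpha_1(\iii)\le\ualpha^n$ together with the three cases in the definition of $\fii^s$ one gets $\lalpha^{sn}\le\fii^s(\iii)\le\ualpha^{sn}$, hence $|v_s(\iii)|\le s|\log\lalpha|\le M|\log\lalpha|$ for $s\le M$. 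Consequently $|g_{q,s}(\iii)|\le B:=Q\log K+(1+Q)M|\log\lalpha|$ uniformly over $\iii$ and over $(q,s)\in\{(q_k,s_k):k\in\N\}\cup\{(q,s)\}$.

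The key step is the uniform Lipschitz bound $|v_{s'}(\iii)-v_s(\iii)|\le|\log\lalpha|\,|s'-s|$ for all $\iii$. Indeed $s\mapsto\log\fii^s(\iii)$ is continuous and piecewise affine, with slope $\log\alpha_1(\iii)$ on $[0,1]$, slope $\log\alpha_2(\iii)$ on $[1,2]$, and slope $\tfrac12\log|\det A_\iii|$ on $[2,\infty)$, and each of these lies in $[n\log\lalpha,n\log\ualpha]\subset[-n|\log\lalpha|,0]$; continuity across $s=1$ and $s=2$ then makes it globally $n|\log\lalpha|$-Lipschitz, and dividing by $n$ gives the claim. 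With this in hand I would write
\[
g_{q_k,s_k}(\iii)-g_{q,s}(\iii)=(q_k-q)\bigl(u(\iii)-v_s(\iii)\bigr)+(1-q_k)\bigl(v_{s_k}(\iii)-v_s(\iii)\bigr),
\]
whence
\[
\bigl|g_{q_k,s_k}(\iii)-g_{q,s}(\iii)\bigr|\le|q_k-q|\bigl(\log K+M|\log\lalpha|\bigr)+(1+Q)|\log\lalpha|\,|s_k-s|,
\]
which tends to $0$ as $k\to\infty$, uniformly in $\iii\in\Sigma_*$. Finally, since $t\mapsto e^t$ is $e^B$-Lipschitz on $[-B,B]$,
\[
\bigl|\psi^{q_k,s_k}(\iii)^{1/|\iii|}-\psi^{q,s}(\iii)^{1/|\iii|}\bigr|=\bigl|e^{g_{q_k,s_k}(\iii)}-e^{g_{q,s}(\iii)}\bigr|\le e^B\bigl|g_{q_k,s_k}(\iii)-g_{q,s}(\iii)\bigr|\longrightarrow0
\]
uniformly, as required.

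There is no real obstacle here: the argument is bookkeeping, and the only mildly delicate point is the piecewise definition of $\fii^s$, which is exactly why I isolate the uniform Lipschitz bound on $s\mapsto v_s(\iii)$ and use the continuity of $\fii^s$ at $s=1,2$. Note also that the hypothesis $(q_k,s_k),(q,s)\notin\R\times\{1,2\}$ is not used in this lemma; it is recorded because the lemma is invoked only in that regime (where the pressure is differentiable).
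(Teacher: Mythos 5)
Your proof is correct and is essentially the paper's argument in logarithmic form: the paper simply cites the two-sided estimates \eqref{eq:psi-upperbound}--\eqref{eq:psi-lowerbound} from the proof of Lemma \ref{thm:pressure-convex} (which encode exactly your bounds $K^{-|\iii|}\le\mu([\iii])\le K^{|\iii|}$ and the $|\log\lalpha|$-Lipschitz dependence of $\log\fii^s(\iii)$ on $s$) and takes $|\iii|$-th roots, so that the ratio $(\psi^{q,s}(\iii)/\psi^{q_k,s_k}(\iii))^{1/|\iii|}$ is squeezed between constants tending to $1$. Your self-contained rederivation via $g_{q,s}=\tfrac1n\log\psi^{q,s}$, including the observation that the hypothesis excluding $s\in\{1,2\}$ is not needed, is fine and matches the paper's route.
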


\begin{proof}
  Using the notation from the proof of Lemma \ref{thm:pressure-convex}, the estimates \eqref{eq:psi-upperbound} and \eqref{eq:psi-lowerbound} imply
  \begin{align*}
    K^{-|q_k-q|} \lalpha^{s_k|q_k-q|} \lalpha(q,s,s_k)^{(s-s_k)(1-q)} &\le \biggl( \frac{\psi^{q,s}(\iii)}{\psi^{q_k,s_k}(\iii)} \biggr)^{1/|\iii|} \\ 
    &\le K^{|q_k-q|} \lalpha^{-s_k|q_k-q|} \ualpha(q,s,s_k)^{(s-s_k)(1-q)}
  \end{align*}
  for all $\iii \in \Sigma_*$ and $k \in \N$. This proves the claim.
\end{proof}

Before proving the differentiability, let us recall some further facts from convex analysis. Let $U \subset \R$ be an open set and let $f \colon U \to \R$ be convex. Let $D \subset U$ be the set of points where $f$ is differentiable. If $z_1,z_2 \in D$ and $x \in U$ such that $z_1 \le x \le z_2$, then, by \cite[Theorem 24.1]{Rockafellar1970}, $f'(z_1) \le G \le f'(z_2)$ for all sub-derivatives $G$ at $x$. It also follows that the set $U \setminus D$ is at most countable and $f'$ is continuous on $D$; see \cite[Theorem 25.3]{Rockafellar1970}. In particular, $D$ is dense in $U$ and if $f$ is differentiable on $U$, then it is continuously differentiable on $U$. 

\begin{prop} \label{thm:pressure-differentiable}
  Let $(A_1,\ldots,A_N) \in GL_2(\R)^N$ be a dominated tuple of contractive matrices and $\mu \in \MM(\Sigma)$ be a quasi-Bernoulli measure. Then the function $(q,s) \mapsto P(\psi^{q,s})$ is differentiable on $\R \times (0,\infty) \setminus \{1,2\}$.
\end{prop}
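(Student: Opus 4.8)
\section*{Proof proposal}

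The plan is to prove that both partial derivatives of $Q(q,s):=P(\psi^{q,s})$ exist at every point of the open set $\R\times(0,\infty)\setminus\{1,2\}$ and that the resulting functions $\partial_qQ$ and $\partial_sQ$ are continuous there; since a function whose partial derivatives exist on an open set and are continuous is differentiable (indeed $C^1$) there, this finishes the proof. The two engines are Proposition \ref{thm:pressure-gradient}, which expresses the partials — wherever they exist — through the equilibrium state of $\psi^{q,s}$, and the weak$^*$-continuity of equilibrium states for the family $\psi^{q,s}$: Lemma \ref{thm:psi-convergence} gives uniform convergence of $\psi^{q_k,s_k}(\iii)^{1/|\iii|}$, Lemma \ref{thm:eq-convergence} then identifies weak$^*$ limits of the corresponding equilibrium states, the equilibrium state of an almost-multiplicative potential is unique, and Lemma \ref{thm:almost-cont} transports the cross-entropy $h(\mu,\cdot)$ (via the almost-multiplicative potential $\iii\mapsto\mu([\iii])$), the quantity $\Lambda(\fii^{s},\cdot)$, and the Lyapunov exponents $\lambda_1,\lambda_2$ through weak$^*$ limits.

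First I would show the partials exist everywhere. Fix $(q_0,s_0)\in\R\times(0,\infty)\setminus\{1,2\}$. The slice $q\mapsto Q(q,s_0)$ is convex by Lemma \ref{thm:pressure-convex}(7), so by convex analysis its one-sided derivatives at $q_0$ are the one-sided limits of $Q'(q,s_0)$ taken along the (dense) set of points where this slice is differentiable. At such a point $q$, Proposition \ref{thm:pressure-gradient} gives $Q'(q,s_0)=-h(\mu,\nu_q)-\Lambda(\fii^{s_0},\nu_q)$, where $\nu_q$ is the unique equilibrium state for $\psi^{q,s_0}$. Letting $q\to q_0$: by weak$^*$ compactness of $\MM_\sigma(\Sigma)$ every subsequential limit $\nu$ of $(\nu_q)$ is, by Lemmas \ref{thm:psi-convergence} and \ref{thm:eq-convergence}, an equilibrium state for $\psi^{q_0,s_0}$, hence equals the unique one $\nu_0$; so $\nu_q\to\nu_0$, and Lemma \ref{thm:almost-cont} yields $Q'(q,s_0)\to-h(\mu,\nu_0)-\Lambda(\fii^{s_0},\nu_0)$. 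As this limit is independent of whether $q\uparrow q_0$ or $q\downarrow q_0$, the left and right derivatives agree, so $q\mapsto Q(q,s_0)$ is differentiable at $q_0$. The same argument — now on a connected component of $(0,\infty)\setminus\{1,2\}$, using Lemma \ref{thm:pressure-convex}(6), the second formula in Proposition \ref{thm:pressure-gradient}, and continuity of $\nu\mapsto\lambda_i(\nu)$ — shows $s\mapsto Q(q_0,s)$ is differentiable at $s_0$.

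Next I would prove continuity of the partials. Knowing now that they exist throughout $\R\times(0,\infty)\setminus\{1,2\}$, Proposition \ref{thm:pressure-gradient} applies and gives their values. Take $(q_k,s_k)\to(q_0,s_0)$ in this set and let $\nu_k$ be the equilibrium state for $\psi^{q_k,s_k}$; Lemmas \ref{thm:psi-convergence}, \ref{thm:eq-convergence} and uniqueness force $\nu_k\to\nu_0$, the equilibrium state of $\psi^{q_0,s_0}$. For large $k$ one has $\lceil s_k\rceil=\lceil s_0\rceil$, so $\Lambda(\fii^{s_k},\nu_k)=\Lambda(\fii^{s_0},\nu_k)+(s_k-s_0)\lambda_{\lceil s_0\rceil}(\nu_k)$, and using Lemma \ref{thm:almost-cont} together with the boundedness of $\lambda_{\lceil s_0\rceil}(\nu_k)$ we get $\partial_qQ(q_k,s_k)=-h(\mu,\nu_k)-\Lambda(\fii^{s_k},\nu_k)\to-h(\mu,\nu_0)-\Lambda(\fii^{s_0},\nu_0)=\partial_qQ(q_0,s_0)$, and similarly $\partial_sQ(q_k,s_k)=(1-q_k)\lambda_{\lceil s_0\rceil}(\nu_k)\to(1-q_0)\lambda_{\lceil s_0\rceil}(\nu_0)=\partial_sQ(q_0,s_0)$. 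Hence $\partial_qQ$ and $\partial_sQ$ are continuous on $\R\times(0,\infty)\setminus\{1,2\}$, and the $C^1$ criterion completes the argument.

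The main obstacle is the first step — ruling out kinks of the one-variable slices. Separate convexity of $Q$ does not by itself exclude phase transitions, and the argument genuinely relies on the uniqueness of equilibrium states for almost-multiplicative potentials, funneled through the weak$^*$-continuity package (Lemmas \ref{thm:psi-convergence}, \ref{thm:eq-convergence}, \ref{thm:almost-cont}); once that is available, the continuity of the partials and the passage from separate to joint differentiability are routine. As in Proposition \ref{thm:pressure-gradient}, I would write the details only for $s\in(0,2)\setminus\{1\}$ and leave the analogous case $s\in(2,\infty)$ to the reader.
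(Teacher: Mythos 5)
Your proposal is correct and follows essentially the same route as the paper: show each partial derivative exists at every point by approaching along the dense set of differentiability points of the convex slices, identifying the limiting equilibrium states via Lemmata \ref{thm:psi-convergence}, \ref{thm:eq-convergence}, uniqueness, and Lemma \ref{thm:almost-cont}, and then deduce differentiability from continuity of the partials. The only cosmetic difference is that you phrase the key convex-analysis step as ``one-sided derivatives are one-sided limits of derivatives at differentiability points'' while the paper squeezes an arbitrary sub-derivative between the derivatives at nearby differentiability points; these are the same fact from \cite[Theorem 24.1]{Rockafellar1970}.
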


\begin{proof}
  We prove the result only on $\R \times (0,2) \setminus \{1\}$; the case $\R \times (2,\infty)$ is left to the reader. To simplify notation, we write $Q(q,s) = P(\psi^{q,s})$ for all $(q,s) \in \R \times (0,2) \setminus \{1\}$. To see that $Q$ is differentiable on $\R \times (0,2) \setminus \{1\}$, it suffices to show that both partial derivatives of $Q$ exist at each point of $\R \times (0,2) \setminus \{1\}$. Indeed, assuming this is the case, then using Proposition \ref{thm:pressure-gradient} combined with Lemmata \ref{thm:eq-convergence}, \ref{thm:almost-cont}, and \ref{thm:psi-convergence} it is straightforward to prove the continuity of the partial derivatives which then implies the differentiability. 

  Fix $(q_0,s_0) \in \R \times (0,2) \setminus \{1\}$. By Lemma \ref{thm:pressure-convex}(7), we know that the partial derivative $\partial_q Q(q,s_0)$ exists for all, except possibly at countably many points of $\R$. Relying on this, choose two sequences $(q_k^-)_{k \in \N}$ and $(q_k^+)_{k \in \N}$ of points in $\R$ with $q_k^- \uparrow q_0$ and $q_k^+ \downarrow q_0$ as $k \to \infty$ so that the partial derivatives $\partial_q Q(q,s_0)|_{q=q_k^-}$ and $\partial_q Q(q,s_0)|_{q=q_k^+}$ exist.

  Let $\nu_k^-$ and $\nu_k^+$ be the equilibrium states associated to $\psi^{q_k^-, s_0}$ and $\psi^{q_k^+, s_0}$, respectively. Then, by Lemmata \ref{thm:psi-convergence} and \ref{thm:eq-convergence}, any limit point of the sequences $(\nu_k^-)_{k\in\mathbb N}$ and $(\nu_k^+)_{k\in\mathbb N}$ must be the unique equilibrium state $\nu$ of $\psi^{q_0,s_0}$. Thus, $\nu_k^-\to \nu$ and $\nu_k^+\to \nu$ by the compactness of $\MM_\sigma(\Sigma)$. Let $G_1$ be a sub-derivative of $q \mapsto Q(q,s_0)$ at $q_0$. It follows from Proposition \ref{thm:pressure-gradient} that
  \begin{align*}
    -h(\mu,\nu_k^-)-\Lambda(\fii^{s_0},\nu_k^-) &= \partial_q Q(q,s_0)|_{q=q_k^-} \le G_1 \\ 
    &\le \partial_q Q(q,s_0)|_{q=q_k^+} = -h(\mu,\nu_k^+)-\Lambda(\fii^{s_0},\nu_k^+),
  \end{align*}
   where both bounds converge to the same value by Lemmata \ref{thm:almost-cont} and \ref{thm:eq-convergence}. Hence, $G_1 = \partial_q Q(q,s_0)|_{q=q_0}$.
   
   Now we show that the other partial derivative exists. By Lemma \ref{thm:pressure-convex}(6), we know that the partial derivative $\partial_s Q(q_0,s)$ exists for all, except possibly at countably many points on $[0,\infty)$. Relying on this, choose two sequences $(s_k^-)_{k \in \N}$ and $(s_k^+)_{k \in \N}$ of points in $(0,2) \setminus \{1\}$ with $\lceil s_k^- \rceil = \lceil s_k^+ \rceil = \lceil s_0 \rceil$, $s_k^- \uparrow s_0$, and $s_k^+ \downarrow s_0$ as $k \to \infty$ so that the partial derivatives $\partial_s Q(q_0,s_k^-)|_{s=s_k^-}$ and $\partial_s Q(q_0,s_k^+)|_{s=s_k^+}$ exist.

  Similarly as before, let $\eta_k^-$ and $\eta_k^+$ be the equilibrium states associated to $\psi^{q_0, s_k^-}$ and $\psi^{q_0, s_k^+}$, respectively, and notice that $\eta_k^-\to \nu$ and $\eta_k^+\to \nu$, where $\nu$ is the unique equilibrium state of $\psi^{q_0,s_0}$. Let $G_2$ be a sub-derivative of $s \mapsto Q(q_0,s)$ at $s_0$. It follows from Proposition \ref{thm:pressure-gradient} that
  \begin{align*}
    (1-q_0)\lambda_{\lceil s_k^- \rceil}(\eta_k^-) &= \partial_s Q(q_0,s)|_{s=s_k^-} \le G_2 \\ 
    &\le \partial_s Q(q_0,s)|_{s=s_k^+} = (1-q_0)\lambda_{\lceil s_k^+ \rceil}(\eta_k^+),
  \end{align*}
  where both bounds converge to the same value by Lemmata \ref{thm:almost-cont} and \ref{thm:eq-convergence}. Hence, $G_2 = \partial_s Q(q_0,s)|_{s=s_0}$ finishing the proof.
\end{proof}

\section{Symbolic Spectrum} \label{sec:symbolic-spectra}

For each $q \ne 1$ let $s_q \in [0,\infty)$ be as in Lemma \ref{thm:pressure-convex}(4). The \emph{symbolic $L^q$-spectrum} is the function $\tau \colon \R \to \R$ defined by $\tau(q)=(q-1)s_q$ for all $q \in \R \setminus \{1\}$ and $\tau(1)=0$. The \emph{symbolic multifractal spectrum} is the function $f \colon [0,\infty) \to \R$ defined by
\begin{equation*}
  f(s) = \sup\{\diml(\eta):\eta\in\MM_\sigma(\Sigma)\text{ such that }\diml(\mu,\eta)=s\}.
\end{equation*}
We say that $\tau$ and $f$ form a \emph{Legendre transform pair}  at $(q,s)$ if $f(s) = qs-\tau(q)$. In this section, we show that $\tau$ and $f$ form a Legendre transform pair at $(q,\tau'(q))$. To that end, following \cite[\S 3]{BarralFeng}, let us begin by verifying the existence of $\tau'$ and reveal its connection to the Lyapunov cross-dimension.

\begin{prop} \label{thm:spectrum-derivative}
  Let $(A_1,\ldots,A_N) \in GL_2(\R)^N$ be a dominated tuple of contractive matrices, $\mu \in \MM(\Sigma)$ be a fully supported quasi-Bernoulli measure, and $q \ne 1$. Then the symbolic $L^q$-spectrum $\tau$ is continuously differentiable on a neighborhood of $q$ with
  \begin{equation*}
    \tau'(q) =
    \begin{cases}
      -\frac{h(\mu,\nu)}{\lambda_1(\nu)}, &\text{if } 0 < \tau(q)/(q-1) < 1, \\ 
      1-\frac{h(\mu,\nu)+\lambda_1(\nu)}{\lambda_2(\nu)}, &\text{if } 1 < \tau(q)/(q-1) < 2, \\ 
      -\frac{2h(\mu,\nu)}{\lambda_1(\nu)+\lambda_2(\nu)}, &\text{if } 2 < \tau(q)/(q-1) < \infty,
    \end{cases}
  \end{equation*}
  where $\nu \in \MM_\sigma(\Sigma)$ is the equilibrium state for $\psi^{q,\tau(q)/(q-1)}$. In particular, if $\tau'(q)$ and $\tau(q)/(q-1)$ are contained in the same interval, $(0,1)$, $(1,2)$ or $(2,\infty)$, then
  \begin{equation*}
    \tau'(q) = \diml(\mu,\nu).
  \end{equation*}
\end{prop}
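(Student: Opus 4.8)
The plan is to differentiate the relation $\tau(q) = (q-1)s_q$ using the implicit description of $s_q$ as the zero of $s \mapsto P(\psi^{q,s})$, and then to substitute the partial derivatives computed in Proposition \ref{thm:pressure-gradient}. First I would note that by Proposition \ref{thm:pressure-differentiable} the function $(q,s) \mapsto P(\psi^{q,s})$ is $C^1$ on $\R \times (0,\infty) \setminus \{1,2\}$; since by hypothesis $s_q = \tau(q)/(q-1)$ lies in one of the open intervals $(0,1)$, $(1,2)$, $(2,\infty)$, the point $(q,s_q)$ is in this domain. By Lemma \ref{thm:pressure-convex}(2)--(3) the partial derivative $\partial_s P(\psi^{q,s})|_{s=s_q}$ is nonzero (strictly negative if $q<1$, strictly positive if $q>1$), so the implicit function theorem applies to $P(\psi^{q,s}) = 0$ and gives a $C^1$ function $q \mapsto s_q$ on a neighborhood of the given $q$, with
\begin{equation*}
  \frac{\mathrm{d}s_q}{\mathrm{d}q} = -\frac{\partial_q P(\psi^{q,s})|_{s=s_q}}{\partial_s P(\psi^{q,s})|_{s=s_q}}.
\end{equation*}
Consequently $\tau(q) = (q-1)s_q$ is $C^1$ near $q$ (hence, being convex by standard arguments, continuously differentiable there), with $\tau'(q) = s_q + (q-1)\,\mathrm{d}s_q/\mathrm{d}q$.

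Next I would plug in the formulas from Proposition \ref{thm:pressure-gradient}. With $\nu$ the equilibrium state for $\psi^{q,s_q}$, we have $\partial_q P(\psi^{q,s})|_{s=s_q} = -h(\mu,\nu) - \Lambda(\fii^{s_q},\nu)$ and $\partial_s P(\psi^{q,s})|_{s=s_q} = (1-q)\lambda_{\lceil s_q \rceil}(\nu)$, where I write $\lambda_{\lceil s_q\rceil}$ for $\lambda_1$, $\lambda_2$, or $\tfrac12(\lambda_1+\lambda_2)$ according to which of the three intervals contains $s_q$. Since $\nu$ is the equilibrium state and $P(\psi^{q,s_q}) = 0$, we have $h(\nu) + \Lambda(\psi^{q,s_q},\nu) = 0$, i.e. $h(\nu) - q\,h(\mu,\nu) + (1-q)\Lambda(\fii^{s_q},\nu) = 0$; this lets me rewrite $\Lambda(\fii^{s_q},\nu)$ in the numerator, or alternatively I can carry the computation directly. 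Then
\begin{equation*}
  \tau'(q) = s_q - (q-1)\cdot\frac{-h(\mu,\nu)-\Lambda(\fii^{s_q},\nu)}{(1-q)\lambda_{\lceil s_q\rceil}(\nu)} = s_q - \frac{h(\mu,\nu)+\Lambda(\fii^{s_q},\nu)}{\lambda_{\lceil s_q\rceil}(\nu)}.
\end{equation*}
Using $\Lambda(\fii^{s_q},\nu) = s_q\lambda_1(\nu)$ on $(0,1)$, $= \lambda_1(\nu) + (s_q-1)\lambda_2(\nu)$ on $(1,2)$, and $= \tfrac{s_q}{2}(\lambda_1(\nu)+\lambda_2(\nu))$ on $(2,\infty)$, a short algebraic simplification collapses the $s_q$ terms and yields precisely the three displayed expressions: $-h(\mu,\nu)/\lambda_1(\nu)$, $1 - (h(\mu,\nu)+\lambda_1(\nu))/\lambda_2(\nu)$, and $-2h(\mu,\nu)/(\lambda_1(\nu)+\lambda_2(\nu))$ respectively.

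For the final assertion, I would recall from Section \ref{sec:equilibrium-states} that the Lyapunov cross-dimension $\diml(\mu,\nu)$ is characterized as the unique $s \ge 0$ with $h(\mu,\nu) + \Lambda(\fii^s,\nu) = 0$, equivalently given by the branch formulas in \eqref{eq:diml-min} with $h(\nu)$ replaced by $h(\mu,\nu)$. Comparing these with the three expressions just obtained for $\tau'(q)$, they are formally identical; so provided $\tau'(q)$ and $s_q = \tau(q)/(q-1)$ lie in the \emph{same} one of the intervals $(0,1)$, $(1,2)$, $(2,\infty)$, the relevant branch of $\diml(\mu,\nu)$ is the active one and $\tau'(q) = \diml(\mu,\nu)$. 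I expect the main obstacle to be purely bookkeeping: verifying that $\partial_s P \neq 0$ so the implicit function theorem genuinely applies (this is where Lemma \ref{thm:pressure-convex}(2)--(3) is essential, and where the open-interval hypothesis on $\tau(q)/(q-1)$ is used to stay away from the nonsmooth points $s \in \{1,2\}$), and then carefully matching the three branch formulas — including checking that when $\tau'(q)$ and $s_q$ land in different intervals the clean identification with $\diml(\mu,\nu)$ can fail, which is exactly why that hypothesis is imposed in the ``in particular'' clause.
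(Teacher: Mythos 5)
Your proposal is correct and follows essentially the same route as the paper: implicit differentiation of $P(\psi^{q,s_q})=0$ using the partial derivatives from Proposition \ref{thm:pressure-gradient}, simplification via the explicit formula for $\Lambda(\fii^{s_q},\nu)$ on each branch, and comparison with the cross-dimension version of \eqref{eq:diml-min} for the final claim. You even supply a detail the paper leaves implicit (nonvanishing of $\partial_s P$ so the implicit function theorem applies); only the passing remark that $\tau$ is ``convex'' is off (it is concave), but this plays no role in your argument.
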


\begin{proof}
  Fix $q_0 \ne 1$ and, by Lemma \ref{thm:pressure-convex}(4), let $s_{q_0} \ge 0$ be such that $P(\psi^{q_0,s_{q_0}}) = 0$. Note that, by the definition of $\tau$, $s_{q_0} = \tau(q_0)/(q_0-1)$. Recalling Propositions \ref{thm:pressure-differentiable} and \ref{thm:pressure-gradient}, implicit differentiation gives
  \begin{equation*}
    \frac{\mathrm{d}s_q}{\mathrm{d}q}\bigg|_{q=q_0} = -\frac{\partial_q P(\psi^{q,s_{q_0}})|_{q=q_0}}{\partial_s P(\psi^{q_0,s})|_{s=s_{q_0}}} =
    \begin{cases}
      \frac{h(\mu,\nu)+\Lambda(\fii^{s_{q_0}},\nu)}{(1-q_0)\lambda_1(\nu)}, &\text{if } 0<s_{q_0}<1, \\ 
      \frac{h(\mu,\nu)+\Lambda(\fii^{s_{q_0}},\nu)}{(1-q_0)\lambda_2(\nu)}, &\text{if } 1<s_{q_0}<2, \\ 
      \frac{h(\mu,\nu)+\Lambda(\fii^{s_{q_0}},\nu)}{\frac12(1-q_0)(\lambda_1(\nu)+\lambda_2(\nu))}, &\text{if } 2<s_{q_0}<\infty.
    \end{cases}
  \end{equation*}
  Since $\tau'(q_0) = s_{q_0}+(q_0-1)\frac{\mathrm{d}s_q}{\mathrm{d}q}|_{q=q_0}$, we get
  \begin{equation} \label{eq:spectrum-derivative1}
    \tau'(q_0) =
    \begin{cases}
      s_{q_0}-\frac{h(\mu,\nu)+\Lambda(\fii^{s_{q_0}},\nu)}{\lambda_1(\nu)}, &\text{if } 0<s_{q_0}<1, \\ 
      s_{q_0}-\frac{h(\mu,\nu)+\Lambda(\fii^{s_{q_0}},\nu)}{\lambda_2(\nu)}, &\text{if } 1<s_{q_0}<2, \\ 
      s_{q_0}-\frac{h(\mu,\nu)+\Lambda(\fii^{s_{q_0}},\nu)}{\frac12(\lambda_1(\nu)+\lambda_2(\nu))}, &\text{if } 2<s_{q_0}<\infty.
    \end{cases}
  \end{equation}
  The first claim follows from this by a simple calculation. If both $\tau'(q_0)$ and $s_{q_0}$ are contained in the same interval, then the first claim and the version of \eqref{eq:diml-min} for the Lyapunov cross-dimension imply the second claim.
\end{proof}

Let us next determine the Lyapunov dimension of the equilibrium state we are considering.

\begin{theorem}\label{thm:dim-tau-derivative}
  Let $(A_1,\ldots,A_N) \in GL_2(\R)^N$ be a dominated tuple of contractive matrices, $\mu \in \MM(\Sigma)$ be a fully supported quasi-Bernoulli measure, and $q \ne 1$. If $\tau(q)/(q-1)$ and $q\tau'(q)-\tau(q)$ are contained in the same interval, $(0,1)$, $(1,2)$ or $(2,\infty)$, then
  \begin{equation*}
    \diml(\nu) = q\tau'(q)-\tau(q),
  \end{equation*}
  where $\nu \in \MM_\sigma(\Sigma)$ is the equilibrium state for $\psi^{q,\tau(q)/(q-1)}$.
\end{theorem}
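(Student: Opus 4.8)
The plan is to compute $\diml(\nu)$ directly from the definition using the fact that $\nu$ is an equilibrium state for $\psi^{q,s_q}$, where $s_q = \tau(q)/(q-1)$, and to substitute the formula for $\tau'(q)$ obtained in Proposition \ref{thm:spectrum-derivative}. Write $s_q = \tau(q)/(q-1)$ for brevity. Since $P(\psi^{q,s_q}) = 0$ and $\nu$ is its equilibrium state, we have the identity $h(\nu) = qh(\mu,\nu) - (1-q)\Lambda(\fii^{s_q},\nu)$, because $0 = h(\nu) + \Lambda(\psi^{q,s_q},\nu) = h(\nu) - qh(\mu,\nu) + (1-q)\Lambda(\fii^{s_q},\nu)$. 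This expresses the entropy $h(\nu)$, which enters the definition of $\diml(\nu)$, in terms of the cross-entropy $h(\mu,\nu)$, which is what appears in the formula for $\tau'(q)$ from Proposition \ref{thm:spectrum-derivative}.

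The second ingredient is that, since $\diml(\nu)$ is by \eqref{eq:diml-min} the unique $t \ge 0$ with $h(\nu) + \Lambda(\fii^t,\nu) = 0$, I only need to verify that $t = q\tau'(q) - \tau(q)$ satisfies this equation, using that $t$ lies in the same interval $(0,1)$, $(1,2)$ or $(2,\infty)$ as $s_q$ by hypothesis. Consider the case $s_q \in (1,2)$, the others being identical in spirit. Here $\Lambda(\fii^{s_q},\nu) = \lambda_1(\nu) + (s_q-1)\lambda_2(\nu)$, and by Proposition \ref{thm:spectrum-derivative} we have $\tau'(q) = 1 - (h(\mu,\nu)+\lambda_1(\nu))/\lambda_2(\nu)$. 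Plugging the identity for $h(\nu)$ into $h(\nu) + \Lambda(\fii^t,\nu)$ with $t \in (1,2)$, that is $h(\nu) + \lambda_1(\nu) + (t-1)\lambda_2(\nu)$, and then solving $h(\nu) + \lambda_1(\nu) + (t-1)\lambda_2(\nu) = 0$ for $t$, one obtains after collecting terms exactly $t = q\tau'(q) - \tau(q)$. The computation is a linear manipulation: one substitutes $h(\nu) = qh(\mu,\nu) - (1-q)(\lambda_1(\nu)+(s_q-1)\lambda_2(\nu))$, uses $(q-1)s_q = \tau(q)$ and the $\tau'(q)$ formula to replace $h(\mu,\nu)$, and checks the coefficients of $\lambda_1(\nu)$ and $\lambda_2(\nu)$ match on both sides.

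The only genuine content beyond algebra is ensuring the formula for $\tau'(q)$ we use is the one valid for the interval containing $s_q$; this is guaranteed because Proposition \ref{thm:spectrum-derivative} provides the three-case formula indexed by which interval contains $\tau(q)/(q-1) = s_q$, independently of where $\tau'(q)$ lies. The hypothesis that $q\tau'(q)-\tau(q)$ lies in the \emph{same} interval as $s_q$ is exactly what lets us invoke the matching branch of \eqref{eq:diml-min} when we declare $\diml(\nu)$ to equal this value. I expect the main (modest) obstacle to be bookkeeping the sign conventions $\lambda_2 \le \lambda_1 \le 0$ and the $(1-q)$ versus $(q-1)$ factors consistently across the three cases, but there is no conceptual difficulty: the identity $P(\psi^{q,s_q})=0$ does all the work, converting the entropy-based expression $\diml(\nu)$ into the cross-entropy-based expression that Proposition \ref{thm:spectrum-derivative} has already tied to $\tau'(q)$.
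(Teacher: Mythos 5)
Your proposal is correct and follows essentially the same route as the paper: both use the equilibrium-state identity $0 = P(\psi^{q,s_q}) = h(\nu) - qh(\mu,\nu) + (1-q)\Lambda(\fii^{s_q},\nu)$, substitute the branch formula for $\tau'(q)$ from Proposition \ref{thm:spectrum-derivative}, and use the hypothesis that $q\tau'(q)-\tau(q)$ lies in the same interval as $s_q$ to invoke the matching case of \eqref{eq:diml-min}. The only cosmetic difference is that you solve $h(\nu)+\Lambda(\fii^t,\nu)=0$ for $t$ while the paper directly computes the corresponding entropy-to-exponent ratio, which is the same algebra.
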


\begin{proof}
  Recall that $s_{q} = \tau(q)/(q-1) \ge 0$ is such that $P(\psi^{q,s_{q}}) = 0$. Therefore,
  \begin{equation} \label{eq:eq-state-calc2}
  \begin{split}
    0 &= P(\psi^{q,s_q}) = h(\nu) + \Lambda(\psi^{q,s_q},\nu) \\
    &= h(\nu) - qh(\mu,\nu) + (1-q)\Lambda(\fii^{s_q},\nu)
  \end{split}
  \end{equation}
  If $s_q \in (0,1)$, then \eqref{eq:eq-state-calc2} and Proposition \ref{thm:spectrum-derivative} imply
  \begin{equation*}
    -\frac{h(\nu)}{\lambda_1(\nu)} = -q\frac{h(\mu,\nu)}{\lambda_1(\nu)} - \tau(q) = q\tau'(q) - \tau(q).
  \end{equation*}
  Since $q\tau'(q) - \tau(q) \in (0,1)$, we have, by \eqref{eq:diml-min},
  \begin{equation*}
    \diml(\nu) = -\frac{h(\nu)}{\lambda_1(\nu)} = q\tau'(q) - \tau(q).
  \end{equation*}
  Furthermore, if $s_q \in (1,2)$, then \eqref{eq:eq-state-calc2} and Proposition \ref{thm:spectrum-derivative} give
  \begin{equation*}
    1 - \frac{h(\nu)+\lambda_1(\nu)}{\lambda_2(\nu)} = q\biggl(1-\frac{h(\mu,\nu)+\lambda_1(\nu)}{\lambda_2(\nu)}\biggr) - \tau(q) = q\tau'(q)-\tau(q).
  \end{equation*}
  Since $q\tau'(q) - \tau(q) \in (1,2)$, we have, by \eqref{eq:diml-min},
  \begin{equation*}
    \diml(\nu) = 1 - \frac{h(\nu)+\lambda_1(\nu)}{\lambda_2(\nu)} = q\tau'(q) - \tau(q).
  \end{equation*}
  Finally, if $s_q \in (2,\infty)$, then \eqref{eq:eq-state-calc2} and Proposition \ref{thm:spectrum-derivative} imply
  \begin{equation*}
    -\frac{2h(\nu)}{\lambda_1(\nu)+\lambda_2(\nu)} = -q\frac{2h(\mu,\nu)}{\lambda_1(\nu)+\lambda_2(\nu)} - \tau'(q) = q\tau'(q)-\tau(q).
  \end{equation*}
  Since $q\tau'(q) - \tau(q) \in (2,\infty)$, we have, by \eqref{eq:diml-min},
  \begin{equation*}
    \diml(\nu) = -\frac{2h(\nu)}{\lambda_1(\nu)+\lambda_2(\nu)} = q\tau'(q) - \tau(q)
  \end{equation*}
  and the proof is finished.
\end{proof}

In the main result of this section, we completely characterize the behavior of the symbolic multifractal spectrum.

\begin{theorem}\label{LegendreThm}
  Let $(A_1,\ldots,A_N) \in GL_2(\R)^N$ be a dominated tuple of contractive matrices, $\mu \in \MM(\Sigma)$ be a fully supported quasi-Bernoulli measure, and $q \ne 1$. If $\tau'(q)$, $\tau(q)/(q-1)$, and $q\tau'(q)-\tau(q)$ are contained in the same interval, $(0,1)$, $(1,2)$ or $(2,\infty)$, then $\tau'(q) \le \tau(q)/(q-1)$ and
  \begin{align*}
    f(\tau'(q)) &= \sup\{\diml(\eta):\eta\in\MM_\sigma(\Sigma)\text{ such that }\diml(\mu,\eta)=\tau'(q)\} \\ 
    &= \diml(\nu) = q\tau'(q)-\tau(q),
  \end{align*}
  where $\nu \in \MM_\sigma(\Sigma)$ is the equilibrium state for $\psi^{q,\tau/(q-1)}$, i.e.\ $\tau$ and $f$ form a Legendre transform pair at $(q,\tau'(q))$.
\end{theorem}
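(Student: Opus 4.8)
The plan is to establish the Legendre transform identity $f(\tau'(q)) = q\tau'(q) - \tau(q)$ by proving the two inequalities separately, using the machinery already developed. The inequality $\tau'(q) \le \tau(q)/(q-1)$ and the computation $\diml(\nu) = q\tau'(q) - \tau(q)$ are already in hand: the latter is exactly Theorem \ref{thm:dim-tau-derivative}, and the former follows because (with $s_q = \tau(q)/(q-1)$) equation \eqref{eq:spectrum-derivative1} shows $\tau'(q_0) = s_{q_0} - (h(\mu,\nu)+\Lambda(\fii^{s_{q_0}},\nu))/\lambda_{\lceil s_{q_0}\rceil}(\nu)$, and since $P(\psi^{q,s_q}) = 0 = h(\nu) + \Lambda(\psi^{q,s_q},\nu)$ one gets $h(\mu,\nu) + \Lambda(\fii^{s_q},\nu) \ge 0$ (this is the inequality $P(\fii^{s_q}) \ge h(\nu) + \Lambda(\fii^{s_q},\nu)$ rearranged, together with $P(\fii^{s_q})\le 0$ which holds since $s_q \ge \dimaff(\fii^s)$ whenever... actually one argues directly from the equilibrium equation and $h(\mu,\nu)\ge h(\nu)\ge 0$ via Lemma \ref{thm:entropy-crossentropy}), and dividing by the negative number $\lambda_{\lceil s_q\rceil}(\nu)$ flips the sign. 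So the crux is the variational identity for $f$.

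For the lower bound $f(\tau'(q)) \ge q\tau'(q) - \tau(q)$, I would exhibit a witness measure in the supremum defining $f(\tau'(q))$, namely the equilibrium state $\nu$ for $\psi^{q,s_q}$ itself. By Proposition \ref{thm:spectrum-derivative}, under the stated hypothesis (all three quantities in the same interval) we have $\tau'(q) = \diml(\mu,\nu)$; thus $\nu$ is admissible in the supremum, and $\diml(\nu) = q\tau'(q)-\tau(q)$ by Theorem \ref{thm:dim-tau-derivative}. Hence $f(\tau'(q)) \ge \diml(\nu) = q\tau'(q) - \tau(q)$.

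For the upper bound $f(\tau'(q)) \le q\tau'(q) - \tau(q)$, I would take an arbitrary $\eta \in \MM_\sigma(\Sigma)$ with $\diml(\mu,\eta) = \tau'(q)$ and bound $\diml(\eta)$. The point is that $\diml(\mu,\eta)$ is characterized (via the analogue of \eqref{eq:diml-min}) as the unique $s$ with $h(\mu,\eta) + \Lambda(\fii^s,\eta) = 0$; writing $t := \tau'(q) = \diml(\mu,\eta)$ this pins down $h(\mu,\eta) = -\Lambda(\fii^t,\eta)$. Similarly $\diml(\eta)$ is the unique $s'$ with $h(\eta) + \Lambda(\fii^{s'},\eta) = 0$. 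Now use $P(\psi^{q,s_q}) = 0$, i.e. $h(\kappa) + \Lambda(\psi^{q,s_q},\kappa) \le 0$ for every $\kappa \in \MM_\sigma(\Sigma)$ with equality at $\kappa = \nu$; applied to $\kappa = \eta$ this gives $h(\eta) - q\,h(\mu,\eta) + (1-q)\Lambda(\fii^{s_q},\eta) \le 0$, so $h(\eta) \le q\,h(\mu,\eta) - (1-q)\Lambda(\fii^{s_q},\eta)$. Substituting $h(\mu,\eta) = -\Lambda(\fii^t,\eta)$ and using the linearity of $s \mapsto \Lambda(\fii^s,\eta)$ on each of the intervals $(0,1),(1,2),(2,\infty)$ — which applies since $t$, $s_q$, and (as I will check) the resulting value $q t - \tau(q)$ all lie in the common interval — one algebraically manipulates $h(\eta) + \Lambda(\fii^{s'},\eta) = 0$ to conclude $s' \le qt - \tau(q)$, i.e. $\diml(\eta) \le q\tau'(q)-\tau(q)$; taking the supremum over admissible $\eta$ finishes it.

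The main obstacle is the bookkeeping in this last step: one must verify that the candidate value $s' = q\tau'(q) - \tau(q)$ genuinely falls in the same interval (guaranteeing the affine formula for $\Lambda(\fii^{\cdot},\eta)$ is the one being used), and then carry the substitution $h(\mu,\eta) = -\Lambda(\fii^t,\eta)$ through the three cases of \eqref{eq:diml-min} cleanly. In each case the algebra mirrors exactly the computation in the proof of Theorem \ref{thm:dim-tau-derivative} but with $\eta$ in place of $\nu$ and with the inequality $h(\eta) \le q h(\mu,\eta) - (1-q)\Lambda(\fii^{s_q},\eta)$ replacing the equality; the sign of $\lambda_{\lceil\cdot\rceil}(\eta) < 0$ must be tracked carefully when it is divided out, since it is what converts the inequality on entropies into the desired inequality $\diml(\eta) \le q\tau'(q) - \tau(q)$. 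Because the equilibrium state $\nu$ saturates the pressure inequality, the two bounds meet and the supremum is attained at $\nu$, yielding the Legendre pair identity.
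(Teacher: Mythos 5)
Your proposal is correct and follows essentially the same route as the paper: the lower bound by exhibiting the equilibrium state $\nu$ for $\psi^{q,\tau(q)/(q-1)}$ as the witness (via Proposition \ref{thm:spectrum-derivative} and Theorem \ref{thm:dim-tau-derivative}), and the upper bound by applying the variational inequality $h(\eta)+\Lambda(\psi^{q,s_q},\eta)\le P(\psi^{q,s_q})=0$ to an arbitrary admissible $\eta$, substituting $h(\mu,\eta)=-\Lambda(\fii^{\tau'(q)},\eta)$, and using the affinity of $s\mapsto\Lambda(\fii^s,\eta)$ on the common interval to get $h(\eta)+\Lambda(\fii^{q\tau'(q)-\tau(q)},\eta)\le 0$, hence $\diml(\eta)\le q\tau'(q)-\tau(q)$. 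The only wobble is your aside on $\tau'(q)\le\tau(q)/(q-1)$: the equilibrium relation gives $h(\mu,\nu)+\Lambda(\fii^{s_q},\nu)=(h(\mu,\nu)-h(\nu))/(1-q)$, whose sign depends on whether $q<1$ or $q>1$, so the cleaner argument is the paper's, which reads the inequality directly off $\diml(\nu)\le\diml(\mu,\nu)$ (Lemma \ref{thm:entropy-crossentropy}) together with the identifications $\diml(\mu,\nu)=\tau'(q)$ and $\diml(\nu)=q\tau'(q)-\tau(q)$.
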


\begin{proof}
  By Proposition \ref{thm:spectrum-derivative} and Theorem \ref{thm:dim-tau-derivative}, the equilibrium state $\nu$ satisfies $\diml(\mu,\nu) = \tau'(q)$ and $\diml(\nu) = q\tau'(q)-\tau(q)$. Since $\diml(\nu) \le \diml(\mu,\nu)$ by Lemma \ref{thm:entropy-crossentropy}, we have $\tau'(q) \le \tau(q)/(q-1)$. Furthermore, if $\eta \in \MM_\sigma(\Sigma)$ is such that $\diml(\mu,\eta) = \tau'(q)$, then we have
  \begin{align*}
    0 &= P(\psi^{q,s_q}) \ge h(\eta) + \Lambda(\psi^{q,s_q},\eta) \\
    &= h(\eta) - qh(\mu,\eta) + (1-q)\Lambda(\fii^{s_q},\eta) \\
    &= h(\eta) + q\Lambda(\fii^{\tau'(q)},\eta) + (1-q)\Lambda(\fii^{s_q},\eta) \\
    &= h(\eta)+\Lambda(\fii^{q\tau'(q)-\tau(q)},\eta)
  \end{align*}
  and, consequently, $\diml(\eta) \le q\tau'(q)-\tau(q)$. Therefore, the supremum in the claim is attained with $\diml(\nu)$ and the proof is finished.
\end{proof}

\section{Exact Dimensionality} \label{sec:exact-dimensionality}

Given an affine IFS $(\fii_1,\ldots, \fii_N)$, where $\fii_i(x) = A_ix+v_i$, let $X$ be the planar self-affine set satisfying \eqref{eq:self-affine-set-def}. The \emph{canonical projection} $\pi\colon \Sigma \to X$ is defined by
\begin{equation*}
  \pi(\iii) = \lim_{n\to\infty} \fii_{\iii|_n}(0) = \sum_{n=1}^\infty A_{\iii|_{n-1}} v_{i_n}
\end{equation*}
for all $\iii = i_1i_2\cdots \in \Sigma$. Here $\fii_\iii = \fii_{i_1} \circ \cdots \circ \fii_{i_n}$ for all $\iii = i_1 \cdots i_n \in \Sigma_n$ and $n \in \N$. It is easy to see that $\pi(\Sigma) = X$. We say that $X$ satisfies the \emph{strong separation condition} if $\fii_i(X) \cap \fii_j(X) = \emptyset$ whenever $i \ne j$. As $\pi([\iii]) = \fii_\iii(X)$ for all $\iii \in \Sigma_*$, the strong separation condition is characterized by the property that the canonical projection $\pi$ is one-to-one. We adopt the convention that whenever we speak about a self-affine set, it is automatically accompanied by the tuple of affine maps that defines it. This allows us to write, for example, ``a self-affine set $X$ is dominated,'' which is understood to mean that ``the associated tuple $\A$ of matrices is dominated.''

Let $\nu \in \MM_\sigma(\Sigma)$ be a quasi-Bernoulli measure. If $X$ is dominated, then, by \cite[Theorem 2.6]{BaranyKaenmaki2017} and \cite[Theorem 2.2]{Rossi2014},
\begin{equation} \label{eq:exact-dim}
  \dimloc(\pi_*\nu,\pi(\iii)) = \ldimh(\pi_*\nu) \le \diml(\nu)
\end{equation}
for $\nu$-almost all $\iii \in \Sigma$. In other words, canonical projections of quasi-Bernoulli measures, which we call quasi-Bernoulli measures on $X$, are \emph{exact-dimensional}. We say that $\A = (A_1,\ldots,A_N) \in GL_2(\R)^N$ is \emph{irreducible} if there is no line in $\R^2$ which is invariant under all of the matrices in $\A$. If $\A$ is dominated such that there is a finite set of lines in $\R^2$ invariant under all of the matrices in $\A$, then, by \cite[Lemma 2.10]{BaranyKaenmakiYu2021}, $\A$ is not irreducible. Therefore, if $X$ is dominated and irreducible, and satisfies the strong separation condition, then, by \cite[Theorem 1.2 and the associated footnote]{BaranyHochmanRapaport},
\begin{equation} \label{eq:BHR}
  \ldimh(\pi_*\nu) = \min\{2,\diml(\nu)\}
\end{equation}
for all quasi-Bernoulli measures $\nu \in \MM_\sigma(\Sigma)$. As observed in \cite[Theorem 2.18]{BaranyKaenmakiYu2021}, \eqref{eq:BHR} holds also when $X$ is dominated and $X_F$ is not a singleton.

In this section, by generalizing the approach used in \cite[\S 4.1]{KemptonFalconer}, we study the local dimension of a quasi-Bernoulli measure on a self-affine set with respect to another measure. We first recall the following lemma.

\begin{lemma} \label{thm:falconer-kempton-lemma}
  Let $X$ be a dominated planar self-affine set satisfying the strong separation condition. If $\mu \in \MM(\Sigma)$ is fully supported quasi-Bernoulli, then there are constants $C \ge 1$ and $\roo_2 \ge \roo_1 > 0$ such that
  \begin{align*}
    C^{-1}&\pi_*\mu(B(\pi(\iii),\roo_1\alpha_2(\iii|_n))) \\
    &\le \mu([\iii|_n]) \bigl(\proj_{(A_{\overleftarrow{\iii|_n}}^{-1}V)^\bot}\bigr)_*(\pi_*\mu)\biggl(\proj_{(A_{\overleftarrow{\iii|_n}}^{-1}V)^\bot}\biggl(B\biggl(\pi(\sigma^n\iii),\frac{\alpha_2(\iii|_n)}{\alpha_1(\iii_n)}\biggr)\biggr)\biggr) \\
    &\le C\pi_*\mu(B(\pi(\iii),\roo_2\alpha_2(\iii|_n)))
  \end{align*}
  for all $\iii \in \Sigma$, $V \in \overline{\RP \setminus \CC}$, and $n \in \N$.
\end{lemma}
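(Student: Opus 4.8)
The plan is to run a renormalisation argument in the spirit of \cite[\S4.1]{KemptonFalconer}, combining the strong separation condition, the quasi-Bernoulli property of $\mu$, and the geometry of cylinders forced by domination. First I would reduce everything to a single cylinder. Put $\delta=\min_{a\ne b}\dist(\fii_a(X),\fii_b(X))>0$. If $\jjj\in\Sigma_n\setminus\{\iii|_n\}$ and $\kkk=\jjj\wedge\iii|_n$, then $\fii_{\iii|_n}(X)$ and $\fii_\jjj(X)$ lie in distinct children of $\fii_\kkk(X)$, so $\dist(\fii_{\iii|_n}(X),\fii_\jjj(X))\ge\alpha_2(\kkk)\,\delta$; since $\alpha_2$ is almost-multiplicative and every $\alpha_2(\cdot)<1$, this gives $\alpha_2(\kkk)\gtrsim\alpha_2(\iii|_n)$ with an implicit constant depending only on $\A$. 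Hence there is $\roo_1>0$, depending only on $\A$, such that $B(\pi(\iii),\roo_1\alpha_2(\iii|_n))$ misses every $\fii_\jjj(X)$ with $\jjj\in\Sigma_n\setminus\{\iii|_n\}$; together with the injectivity of $\pi$ under strong separation this yields $\pi_*\mu(B(\pi(\iii),\roo_1\alpha_2(\iii|_n)))=\pi_*\mu(B(\pi(\iii),\roo_1\alpha_2(\iii|_n))\cap\fii_{\iii|_n}(X))$ and $\pi^{-1}(\fii_{\iii|_n}(E))=\iii|_n\cdot\pi^{-1}(E)$ for $E\subset X$. Using $\pi(\iii|_n\jjj)=\fii_{\iii|_n}(\pi(\jjj))$ and the quasi-Bernoulli inequalities for $\mu$ (applied after approximating $\pi^{-1}(E)$ by cylinders), I would record the renormalisation identity
\[
  \pi_*\mu(\fii_{\iii|_n}(E))\asymp\mu([\iii|_n])\,\pi_*\mu(E)\qquad(E\subset X),
\]
with constant independent of $\iii$, $n$, $E$.

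The geometric input is the following. For $r\asymp\alpha_2(\iii|_n)$ the set $\fii_{\iii|_n}^{-1}(B(\pi(\iii),r))$ is an ellipse centred at $\pi(\sigma^n\iii)$ with semi-axes $\asymp r/\alpha_2(\iii|_n)\asymp1$ and $\asymp r/\alpha_1(\iii|_n)\asymp\alpha_2(\iii|_n)/\alpha_1(\iii|_n)$. The place where domination enters is that the short axis of this ellipse is, \emph{uniformly over} $V\in\overline{\RP\setminus\CC}$, within angle $\lesssim\alpha_2(\iii|_n)/\alpha_1(\iii|_n)$ of the line $A_{\overleftarrow{\iii|_n}}^{-1}V$: by \cite[Theorem B]{BochiGourmelon2009} and \cite[Lemma 2.2]{BochiMorris2015} the map $A_{\overleftarrow{\iii|_n}}^{-1}$ contracts the whole cone $\overline{\RP\setminus\CC}$ into an arc of diameter $\lesssim\alpha_2(\iii|_n)/\alpha_1(\iii|_n)$ about the relevant singular direction, so the choice of $V$ perturbs the projection direction only within the tolerance we need. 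Consequently both the ellipse and, more generally, $X\cap\fii_{\iii|_n}^{-1}(B(\pi(\iii),r))$ are squeezed between two strips orthogonal to $A_{\overleftarrow{\iii|_n}}^{-1}V$ of widths $\asymp\alpha_2(\iii|_n)/\alpha_1(\iii|_n)$ centred at $\pi(\sigma^n\iii)$, and these strips are exactly the level sets of $\proj_{(A_{\overleftarrow{\iii|_n}}^{-1}V)^\bot}$ through the ball $B(\pi(\sigma^n\iii),\alpha_2(\iii|_n)/\alpha_1(\iii|_n))$.

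With these two ingredients the two inequalities are bookkeeping. For the left inequality I would shrink $\roo_1$ further so that $\fii_{\iii|_n}^{-1}(B(\pi(\iii),\roo_1\alpha_2(\iii|_n)))$ lies inside that strip; the renormalisation identity and monotonicity of $\pi_*\mu$ then bound $\pi_*\mu(B(\pi(\iii),\roo_1\alpha_2(\iii|_n)))$ by $\mu([\iii|_n])$ times the $\proj_{(A_{\overleftarrow{\iii|_n}}^{-1}V)^\bot}$-image of $\pi_*\mu$ on the strip, which is the middle quantity. For the right inequality I would take $\roo_2$ large — depending only on $\diam X$ and the angular error above — so that the part of $X$ lying in the strip is contained in $\fii_{\iii|_n}^{-1}(B(\pi(\iii),\roo_2\alpha_2(\iii|_n)))$ (this uses only that $\pi_*\mu$ is supported on the bounded set $X$), and then apply the renormalisation identity once more; this also makes $\roo_2\ge\roo_1$ automatic. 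Collecting the finitely many comparability constants into a single $C$ completes the argument.

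The only genuinely nontrivial step is the uniform angular estimate of the second paragraph: one must show that $A_{\overleftarrow{\iii|_n}}^{-1}V$ tracks the minor-axis direction of the renormalised cylinder to within an error \emph{of the order of the eccentricity} $\alpha_2(\iii|_n)/\alpha_1(\iii|_n)$, and does so for all $V$ in the cone simultaneously. This is precisely the content that domination (via the Bochi--Gourmelon theory already invoked in Section~\ref{sec:domination}) is there to supply, and it is the step where the definition of the Furstenberg directions is used; everything else is routine manipulation with the strong separation condition and the quasi-Bernoulli constant of $\mu$.
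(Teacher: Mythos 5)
Your outer architecture is the right one, and it is essentially the argument the paper has in mind: the paper's own proof of Lemma \ref{thm:falconer-kempton-lemma} is nothing more than the reference to \cite[Lemma 4.2]{KemptonFalconer}, and your first paragraph (localisation to the level-$n$ cylinder via strong separation, plus the renormalisation $\pi_*\mu(\fii_{\iii|_n}(E))\asymp\mu([\iii|_n])\pi_*\mu(E)$ from the quasi-Bernoulli property) and your final bookkeeping are sound reconstructions of that argument.

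The gap is exactly at the step you flag as the only nontrivial one, and as written it does not work. The set $\fii_{\iii|_n}^{-1}(B(\pi(\iii),\roo\alpha_2(\iii|_n)))=\{y:|A_{\iii|_n}(y-\pi(\sigma^n\iii))|\le\roo\alpha_2(\iii|_n)\}$ has its \emph{long} axis along the singular direction $v_2(A_{\iii|_n})$ belonging to $\alpha_2(\iii|_n)$, and to squeeze it into the strip $\{y:\proj_{W^\bot}(y)\in\proj_{W^\bot}(B(\pi(\sigma^n\iii),\alpha_2(\iii|_n)/\alpha_1(\iii|_n)))\}$, whose unbounded direction is $W$, you need the \emph{long} axis to be within angle $O(\alpha_2(\iii|_n)/\alpha_1(\iii|_n))$ of $W$; your claim that the \emph{short} axis is within that angle of $W$ is the opposite alignment and would put the ellipse across the strip. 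More seriously, the mechanism you invoke does not give the alignment for the direction you use: the projective contraction of $A_{\overleftarrow{\iii|_n}}^{-1}=A_{i_1}^{-1}\cdots A_{i_n}^{-1}$ on $\overline{\RP\setminus\CC}$ is governed by the singular values of the \emph{reversed} word, and it clusters the image arc near the Furstenberg direction $\Pi(\iii)$, which in general stays a definite angle away from $v_2(A_{\iii|_n})$. For instance, take $A_1=\diag(1/2,1/4)$ and $A_2$ upper triangular with diagonal entries $1/2,1/4$ and a nonzero corner, and $\iii=1222\cdots$: then $v_2(A_{\iii|_n})=v_2(A_1A_2^{n-1})$ converges to the contracting eigendirection $E$ of $A_2$, while $A_{\overleftarrow{\iii|_n}}^{-1}V=A_1^{-1}A_2^{-(n-1)}V$ converges to $A_1^{-1}E\ne E$, so the angle does not tend to $0$ even though $\alpha_2(\iii|_n)/\alpha_1(\iii|_n)\to 0$. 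The alignment that is true, and the one the paper actually needs when it feeds the lemma into Theorem \ref{thm:exact-dim} (there the projection direction is the second coordinate of $T^n(\iii,V)$, namely $A_{\iii|_n}^{-1}V=A_{i_n}^{-1}\cdots A_{i_1}^{-1}V$, so the arrow in the printed statement should be read consistently with that use), is: for every $V\in\overline{\RP\setminus\CC}$ the line $A_{\iii|_n}^{-1}V$ makes angle $O(\alpha_2(\iii|_n)/\alpha_1(\iii|_n))$ with $v_2(A_{\iii|_n})$. Its proof is not ``the image arc is small'' but the cone estimate supplied by domination: there is $c>0$ with $|A_{\iii|_n}^{-1}v|\ge c\|A_{\iii|_n}^{-1}\|$ for every unit vector $v$ spanning a line of $\overline{\RP\setminus\CC}$, so the unit vector $w=A_{\iii|_n}^{-1}v/|A_{\iii|_n}^{-1}v|$ satisfies $|A_{\iii|_n}w|\le c^{-1}\alpha_2(\iii|_n)$, and decomposing $w$ along the singular directions of $A_{\iii|_n}$ gives $\sin\phi\le c^{-1}\alpha_2(\iii|_n)/\alpha_1(\iii|_n)$ for the angle $\phi$ between $w$ and $v_2(A_{\iii|_n})$. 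With this corrected statement and proof of the angular step your remaining estimates go through; without it, the squeezing you assert is unproved, and for the reversed composition you wrote it is false.
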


\begin{proof}
  The proof is the same as that of \cite[Lemma 4.2]{KemptonFalconer} where the result is formulated in the case of positive matrices.
\end{proof}

The main result of this section is the following theorem.

\begin{theorem} \label{thm:exact-dim}
  Let $X$ be a dominated planar self-affine set satisfying the strong separation condition. If $\mu \in \MM(\Sigma)$ is fully supported quasi-Bernoulli and $\nu \in \MM_\sigma(\Sigma)$ is ergodic and quasi-Bernoulli, and assume that 
  \[
  \dimloc((\proj_{V^\bot})_*(\pi_*\mu),\proj_{V^\bot}(\iii))=1
  \]
  for $\nu\times \nu_F$-almost all $(\iii,V) \in \Sigma\times\mathbb R\mathbb P^1$, then
  \begin{equation*}
    \dimloc(\pi_*\mu,\pi(\iii)) = \min\{ 2, \diml(\mu,\nu) \}
  \end{equation*}
  for $\nu$-almost all $\iii \in \Sigma$.
\end{theorem}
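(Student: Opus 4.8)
The plan is to compute the local dimension of $\pi_*\mu$ at $\pi(\iii)$ by relating balls $B(\pi(\iii),r)$ to symbolic cylinders via Lemma~\ref{thm:falconer-kempton-lemma}, following the strategy of \cite[\S4.1]{KemptonFalconer}. The first step is to pick, for a given small $r>0$ and a typical $\iii\in\Sigma$, the stopping time $n=n(\iii,r)$ defined by $\alpha_2(\iii|_n)\approx r$ (using domination and almost-multiplicativity of $\alpha_2$, such an $n$ is essentially unique up to a bounded additive error, and $n\to\infty$ as $r\downarrow 0$). Then Lemma~\ref{thm:falconer-kempton-lemma} sandwiches $\pi_*\mu(B(\pi(\iii),r))$ between constant multiples of
\[
  \mu([\iii|_n])\,\bigl(\proj_{(A_{\overleftarrow{\iii|_n}}^{-1}V)^\bot}\bigr)_*(\pi_*\mu)\Bigl(\proj_{(A_{\overleftarrow{\iii|_n}}^{-1}V)^\bot}\bigl(B(\pi(\sigma^n\iii),\alpha_2(\iii|_n)/\alpha_1(\iii|_n))\bigr)\Bigr),
\]
so that $\log\pi_*\mu(B(\pi(\iii),r))$ splits, up to $O(1)$, as $\log\mu([\iii|_n])$ plus the log of the projected-measure of a ball of radius $\approx\alpha_2(\iii|_n)/\alpha_1(\iii|_n)$ in the one-dimensional fibre direction $W_n:=(A_{\overleftarrow{\iii|_n}}^{-1}V)^\bot$.

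The second step is the asymptotic evaluation of each piece. For the first piece, since $\mu$ is quasi-Bernoulli and $\nu$ is ergodic, Kingman's/Birkhoff's theorem gives $\frac1n\log\mu([\iii|_n])\to -h(\mu,\nu)$ for $\nu$-a.e.\ $\iii$; likewise $\frac1n\log\alpha_1(\iii|_n)\to\lambda_1(\nu)$, $\frac1n\log\alpha_2(\iii|_n)\to\lambda_2(\nu)$, and $\frac1n\log r\to\lambda_2(\nu)$ for the chosen $n$. For the projected piece: the relevant line $A_{\overleftarrow{\iii|_n}}^{-1}V$ is exactly the object tracked by the skew-product $T(\iii,V)=(\sigma\iii,A_{\iii|_1}^{-1}V)$, and $\pi(\sigma^n\iii)$ is the base point after applying $\sigma^n$; by hypothesis the Furstenberg-direction family of projections of $\pi_*\mu$ has local dimension $1$ at $\nu\times\nu_F$-a.e.\ point, and since $\nu\times\nu_F$ is (equivalent to) a $T$-invariant ergodic measure, one upgrades this to a statement that holds along the orbit $T^n(\iii,\Pi(\iii))$ for $\nu$-a.e.\ $\iii$ — giving that the log of the projected-measure of the radius-$(\alpha_2(\iii|_n)/\alpha_1(\iii|_n))$ ball divided by $\log(\alpha_2(\iii|_n)/\alpha_1(\iii|_n))=n(\lambda_2(\nu)-\lambda_1(\nu))+o(n)$ tends to $\min\{1,\dim\}$; here exactly one projection direction is singled out per $\iii$ (namely $V=\Pi(\iii)$, the top Oseledets/canonical direction), and a separate argument (Fubini in $V$ together with the a.e.\ hypothesis, as in \cite{KemptonFalconer}) is needed to make the hypothesis about $\nu\times\nu_F$-a.e.\ $(\iii,V)$ bite at this single $V$. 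Assembling, $\frac{\log\pi_*\mu(B(\pi(\iii),r))}{\log r}\to\frac{-h(\mu,\nu)+\min\{1,d\}\cdot(\lambda_2(\nu)-\lambda_1(\nu))}{\lambda_2(\nu)}$ where $d$ is the projected dimension, and a short case analysis ($d\ge 1$ vs.\ $d<1$, equivalently whether $\diml(\mu,\nu)$ is $\ge1$ or $<1$, and whether it exceeds $2$) shows this equals $\min\{2,\diml(\mu,\nu)\}$ using the defining formula \eqref{eq:diml-min} for $\diml(\mu,\nu)$ with $h(\nu)$ replaced by $h(\mu,\nu)$. In fact the hypothesis gives $d=1$, so only the sub-case $\diml(\mu,\nu)\ge1$ is realized, and one checks that $\tfrac{1}{\lambda_2(\nu)}\bigl(-h(\mu,\nu)+\lambda_2(\nu)-\lambda_1(\nu)\bigr)=1-\tfrac{h(\mu,\nu)+\lambda_1(\nu)}{\lambda_2(\nu)}$ matches the middle branch, and the $\min\{2,\cdot\}$ accounts for the ambient dimension ceiling when this quantity would exceed $2$.

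The main obstacle I expect is the transfer of the $\nu\times\nu_F$-a.e.\ projection hypothesis into a statement usable along the single orbit $(\sigma^n\iii, A_{\overleftarrow{\iii|_n}}^{-1}\Pi(\iii))_{n}$ for $\nu$-a.e.\ fixed $\iii$: one must show that for $\nu$-a.e.\ $\iii$, the direction $\Pi(\iii)$ is a "good" direction for the projected local dimension, and that the local dimension is attained uniformly enough (no drift of the radius, control of the $O(1)$ errors from Lemma~\ref{thm:falconer-kempton-lemma} and from the choice of $n$) so that the ratio of logarithms genuinely converges rather than merely having controlled upper/lower limits. This is where the ergodicity of the $T$-invariant measure equivalent to $\nu\times\nu_F$ (cited at the end of Section~\ref{sec:domination}) does the work, via a Birkhoff argument on the skew-product applied to the (suitably measurable) function recording the local dimension of the fibre projection; the exact-dimensionality of the projected measures packaged in the hypothesis is what guarantees the Birkhoff limit is a genuine limit. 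The remaining steps — the stopping-time construction, the Birkhoff limits for $h(\mu,\nu)$ and the Lyapunov exponents, and the final arithmetic reconciliation with $\diml(\mu,\nu)$ and the cap at $2$ — are routine given the earlier lemmas.
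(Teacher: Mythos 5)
Your overall architecture matches the paper's: Lemma \ref{thm:falconer-kempton-lemma} to split $\log\pi_*\mu(B(\pi(\iii),r))$ into $\log\mu([\iii|_n])$ plus the projected measure of a ball of radius $\alpha_2(\iii|_n)/\alpha_1(\iii|_n)$, Kingman/Birkhoff for $h(\mu,\nu)$, $\lambda_1(\nu)$, $\lambda_2(\nu)$, the skew product $T$ to track the directions, and the final arithmetic giving $1-\frac{h(\mu,\nu)+\lambda_1(\nu)}{\lambda_2(\nu)}$. But the step you yourself flag as ``the main obstacle'' is precisely the heart of the proof, and the mechanisms you propose for it do not suffice. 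Applying Birkhoff's theorem to ``the function recording the local dimension of the fibre projection'' gives nothing: that function equals $1$ almost everywhere by hypothesis, so its ergodic averages are trivially $1$ and carry no information about the measure of balls at the specific finite scales $r_n(\iii)=\alpha_2(\iii|_n)/\alpha_1(\iii|_n)$ that Lemma \ref{thm:falconer-kempton-lemma} produces. Likewise, trying to make the hypothesis ``bite'' at the single direction $V=\Pi(\iii)$ is both unavailable (the singleton $\{\Pi(\iii)\}$ may be $\nu_F$-null, so the a.e.\ hypothesis says nothing about it) and unnecessary: the lemma holds for \emph{every} $V\in\overline{\RP\setminus\CC}$, so by Fubini one may simply work with any $\nu_F$-typical $V$ for $\nu$-typical $\iii$.

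The paper closes this gap as follows. Writing $d_r(\iii,V)$ for the ratio of logarithms of the projected ball measure, Egorov's theorem gives, for each $\kappa>0$, a set $\Gamma_\kappa$ with $\nu\times\nu_F(\Gamma_\kappa)>1-\kappa$ on which $|d_r(\iii,V)-1|<\eps$ \emph{uniformly} for all $r<r_0$; Birkhoff's theorem applied to the indicator function of $\Gamma_\kappa$ (using the $T$-invariant ergodic measure equivalent to $\nu\times\nu_F$) shows the set of times $n$ with $T^n(\iii,V)\in\Gamma_\kappa$ has density at least $1-\kappa$, hence density one after $\kappa\downarrow 0$; since by domination $r_n(\iii)<r_0$ eventually, this yields $|d_{r_{n_k}(\iii)}(T^{n_k}(\iii,V))-1|<\eps$ along a sequence $(n_k)$ with $n_k/k\to 1$. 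This density-one control is weaker than what your stopping-time construction implicitly demands (an estimate at essentially every large $n$, since $n(\iii,r)$ sweeps through all integers as $r\downarrow 0$), so the paper adds one more step: using almost-multiplicativity of $\alpha_2$ under domination and $n_{k+1}/n_k\to 1$, it shows $\log\alpha_2(\iii|_{n_{k+1}})/\log\alpha_2(\iii|_{n_k})\to 1$, so the upper and lower local dimensions of $\pi_*\mu$ at $\pi(\iii)$ may legitimately be computed along the scales $\roo\alpha_2(\iii|_{n_k})$ alone. Without the Egorov uniformization, the density-one selection of times, and this scale-comparison argument, your sketch does not yield convergence of the ratio of logarithms, so as written there is a genuine gap at the decisive step.
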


\begin{proof}
  Fix $\eps > 0$ and write
  \begin{equation*}
    d_r(\iii,V) = \frac{\log (\proj_{V^\bot})_*(\pi_*\mu)(\proj_{V^\bot}(B(\pi(\iii),r)))}{\log r}
  \end{equation*}
  for all $\iii \in \Sigma$, $V \in \RP$, and $r > 0$. By the assumption of the theorem, we have $\lim_{r \downarrow 0} d_r(\iii,V) = 1$ for $\nu \times \nu_F$-almost all $(\iii,V) \in \Sigma \times \RP$. Let $\kappa>0$ and observe that, by Egorov's theorem, there exists $r_0>0$ such that the set
  \begin{equation*}
    \Gamma_\kappa = \{(\iii,V) \in \Sigma \times \RP : |d_r(\iii,V) - 1| < \eps \text{ for all } 0<r<r_0\}
  \end{equation*}
  satisfies $\nu \times \nu_F(\Gamma_\kappa) > 1-\kappa$. Since $\nu$ is quasi-Bernoulli, there exists a $T$-invariant ergodic measure $\nu^\ast$ on $\Sigma \times \RP$ which is equivalent to $\nu \times \nu_F$. Therefore, $\nu^\ast(\Gamma_\kappa) > 1-\kappa$ and Birkhoff's ergodic theorem implies that
  \begin{align*}
    \lim_{k \to \infty} \frac{1}{k} \#\{n \in \{1,\ldots,k\} &: |d_r(T^n(\iii,V))-1| < \eps \text{ for all } 0<r<r_0\} \\ 
    &= \lim_{k \to \infty} \frac{1}{k} \#\{n \in \{1,\ldots,k\} : T^n(\iii,V) \in \Gamma_\kappa\} \\ 
    &= \nu^\ast(\Gamma_\kappa) > 1-\kappa
  \end{align*}
  for $\nu^\ast$-almost all $(\iii,V) \in \Sigma \times \RP$ and hence for $\nu \times \nu_F$-almost all $(\iii,V) \in \Sigma \times \RP$. Write $r_n(\iii) = \alpha_2(\iii|_n)/\alpha_1(\iii|_n)$ for all $\iii \in \Sigma$ and $n \in \N$, and note that, by domination, there exists $n_0 \in \N$ such that $r_n(\iii) < r_0$ for all $\iii \in \Sigma$ and $n \ge n_0$. By letting $\kappa \downarrow 0$, it follows from the above estimate that
  \begin{equation} \label{eq:density-one}
    \lim_{k \to \infty} \frac{1}{k} \#\{n \in \{1,\ldots,k\} : |d_{r_n(\iii)}(T^n(\iii,V))-1| < \eps\} = 1
  \end{equation}
  for $\nu \times \nu_F$-almost all $(\iii,V) \in \Sigma \times \RP$.

  Let $(\iii,V) \in \Sigma \times \RP$ be such that \eqref{eq:density-one} is satisfied. We may choose a strictly increasing sequence $(n_k)_{k \in \N}$ of integers such that $\lim_{k \to \infty} n_k/k = 1$ and
  \begin{equation} \label{eq:dim-quantify}
    |d_{r_{n_k}(\iii)}(T^{n_k}(\iii,V))-1| < \eps
  \end{equation}
  for all $k \in \N$. Since $\nu$ is ergodic and $\mu$, $\alpha_1$, and $1/\alpha_2$ are sub-multiplicative, Lemma \ref{thm:fekete} implies that
  \begin{equation} \label{eq:kingman-versions}
  \begin{split}
    \lambda_1(\nu) &= \lim_{k \to \infty} \frac{1}{n_k} \log\alpha_1(\iii|_{n_k}), \\ 
    \lambda_2(\nu) &= \lim_{k \to \infty} \frac{1}{n_k} \log\alpha_2(\iii|_{n_k}), \\
    h(\mu,\nu) &= -\lim_{k \to \infty} \frac{1}{n_k} \log\mu([\iii|_{n_k}])
  \end{split}
  \end{equation}
  for $\nu$-almost all $\iii \in \Sigma$.

  Let $\roo_2>0$ be as in Lemma \ref{thm:falconer-kempton-lemma} and notice that, since $\alpha_2$ is almost-multiplicative by domination, there exists a constant $C \ge 1$ such that
  \begin{align*}
    1 &\ge \frac{\log \roo_2\alpha_2(\iii|_{n_{k+1}})}{\log \roo_2\alpha_2(\iii|_{n_k})} \ge \frac{\log \roo_2\alpha_2(\iii|_{n_k}) + \log \alpha_2(\sigma^{n_k}\iii|_{n_{k+1}-n_k}) + \log C}{\log \roo_2\alpha_2(\iii|_{n_k})} \\ 
    &\ge 1 + \frac{(n_{k+1}-n_k)\log C\max_{i \in \{1,\ldots,N\}}\alpha_2(i)}{n_k\log \roo_2\min_{i \in \{1,\ldots,N\}}\alpha_2(i)} + \frac{\log C}{\log \roo_2\alpha_2(\iii|_{n_k})}.
  \end{align*}
  Since $\lim_{k \to \infty} n_k/k = 1$, we see that $\lim_{k \to \infty} (n_{k+1}-n_k)/n_k = 0$ and hence,
  \begin{equation*}
    \lim_{k \to \infty} \frac{\log \roo_2\alpha_2(\iii|_{n_{k+1}})}{\log \roo_2\alpha_2(\iii|_{n_k})} = 1
  \end{equation*}
  and
  \begin{equation*}
    \udimloc(\pi_*\mu,\pi(\iii)) = \limsup_{k \to \infty} \frac{\log \pi_*\mu(B(\pi(\iii),\roo_2\alpha_2(\iii|_{n_k})))}{\log \roo_2\alpha_2(\iii|_{n_k})}.
  \end{equation*}
  Therefore, by Lemma \ref{thm:falconer-kempton-lemma}, \eqref{eq:kingman-versions}, and \eqref{eq:dim-quantify}, there exists a constant $C \ge 1$ such that
  \begin{align*}
    &\udimloc(\pi_*\mu,\pi(\iii)) \le \limsup_{k \to \infty} \Biggl( \frac{\log C^{-1}\mu([\iii|_{n_k}])}{\log \roo_2\alpha_2(\iii|_{n_k})} \\ 
    &\qquad\quad+ \frac{\log \bigl(\proj_{(A_{\overleftarrow{\iii|_{n_k}}}^{-1}V)^\bot}\bigr)_*(\pi_*\mu)\bigl(\proj_{(A_{\overleftarrow{\iii|_{n_k}}}^{-1}V)^\bot}\bigl(B\bigl(\pi(\sigma^{n_k}\iii),\frac{\alpha_2(\iii|_{n_k})}{\alpha_1(\iii|_{n_k})}\bigr)\bigr)\bigr)}{\log \roo_2\alpha_2(\iii|_{n_k})} \Biggr) \\ 
    &\quad\le \limsup_{k \to \infty} \frac{\log C^{-1}\mu([\iii|_{n_k}])}{\log \roo_2\alpha_2(\iii|_{n_k})} + \limsup_{k \to \infty} d_{r_{n_k}(\iii)}(T^{n_k}(\iii,V)) \frac{\log \frac{\alpha_2(\iii|_{n_k})}{\alpha_1(\iii|_{n_k})}}{\log \roo_2\alpha_2(\iii|_{n_k})} \\ 
    &\quad\le \frac{-h(\mu,\nu)}{\lambda_2(\nu)} + (1+\eps)\frac{\lambda_2(\nu)-\lambda_1(\nu)}{\lambda_2(\nu)}.
  \end{align*}
  By letting $\eps \downarrow 0$, we see that
  \begin{equation*}
    \udimloc(\pi_*\mu,\pi(\iii)) \le \min\biggl\{2, 1 - \frac{h(\mu,\nu)+\lambda_1(\nu)}{\lambda_2(\nu)} \biggr\}=\min\{2, \diml(\mu,\nu)\}
  \end{equation*}
  for $\nu$-almost all $\iii \in \Sigma$. As a similar argument shows that the right-hand side above is a lower bound for $\ldimloc(\pi_*\mu,\pi(\iii))$ for $\nu$-almost all $\iii \in \Sigma$, we have finished the proof.
\end{proof}

\section{Multifractal Formalism} \label{sec:multifractal-formalism}

In this section, we extend the symbolic multifractal formalism results of Section \ref{sec:symbolic-spectra} to obtain results in terms of the Hausdorff dimension of the sets
\begin{equation*}
  X_s = X(\mu,s) = \{x\in X : \dimloc(\mu,x)=s\}.
\end{equation*}
This section contains several short propositions, some of which are quite similar; the proofs of Theorems \ref{1.2} and \ref{1.3} are presented after them.

\begin{prop} \label{thm:upper-bound-small-q}
   Let $X$ be a dominated planar self-affine set satisfying the strong separation condition, $\mu \in \MM(\Sigma)$ be a fully supported quasi-Bernoulli measure, $0<q<1$, and $s \ge 0$. Then 
  \begin{equation*}
      \dimh(X_s) \le qs-\tau(q).
  \end{equation*}
\end{prop}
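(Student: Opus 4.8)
The plan is to obtain an upper bound for $\dimh(X_s)$ by a standard covering argument based on the pressure function, using the fact that $P(\psi^{q,s_q})=0$ where $s_q = \tau(q)/(q-1)$. Since $0<q<1$, we will be able to apply H\"older's inequality in the favorable direction. First I would fix $s \ge 0$ and, given $\eps>0$, consider the points $x = \pi(\iii) \in X_s$. For such $x$ we have $\mu(B(x,r)) \ge r^{s+\eps}$ for all small $r$, which by the strong separation condition (so that $\pi$ is a bijection and $\pi_*\mu([\iii|_n]) = \mu(\fii_{\iii|_n}(X))$ is comparable to the $\mu$-mass of an approximate ball) translates, using domination and the almost-multiplicativity of $\alpha_1,\alpha_2$, into a lower bound $\mu([\iii|_n]) \gtrsim \fii^s(\iii|_n)^{?}$-type control at the cylinder scale. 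More precisely, I would pass to the natural covering of $X_s$ by cylinders $\fii_\iii(X)$ whose diameter is comparable to $\alpha_1(\iii)$, grouped so that $\alpha_1(\iii) \approx \delta$, and estimate the $t$-dimensional Hausdorff content of $X_s$ from above by a sum of the form $\sum \alpha_1(\iii)^t$ over the relevant cylinders.

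The key step is then to bound this sum. For any cylinder $\iii$ contributing to the cover of $X_s$ at scale $\delta$, the local dimension condition gives $\mu([\iii]) \ge (\text{diam}\,\fii_\iii(X))^{s+\eps} \gtrsim \alpha_1(\iii)^{s+\eps}$ up to uniform constants (for the relevant range we may as well reduce to $s<1$, where $\fii^s(\iii) = \alpha_1(\iii)^s$; the other ranges are handled by the same device with $\fii^s$ in place of $\alpha_1^s$). Raising to the power $q \in (0,1)$ reverses nothing but lets us write $\alpha_1(\iii)^{(s+\eps)q} \le \mu([\iii])^q$, hence
\begin{equation*}
  \alpha_1(\iii)^{qs - \tau(q) + q\eps} \le \mu([\iii])^q \,\alpha_1(\iii)^{-\tau(q)} = \mu([\iii])^q \,\fii^{s_q}(\iii)^{1-q}
\end{equation*}
since $-\tau(q) = (1-q)s_q$ and $\fii^{s_q}(\iii) = \alpha_1(\iii)^{s_q}$ when $s_q<1$ (when $s_q \ge 1$ one replaces $\alpha_1(\iii)^{-\tau(q)}$ by $\fii^{s_q}(\iii)^{1-q}$, which by domination is comparable to a fixed power of $\alpha_1(\iii)$ and the argument is unchanged up to constants). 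Summing over the $\iii$ in the cover, which are pairwise incomparable words, gives
\begin{equation*}
  \sum_{\iii} \alpha_1(\iii)^{qs-\tau(q)+q\eps} \le \sum_{\iii} \psi^{q,s_q}(\iii) \le \sum_{n} \sum_{\jjj \in \Sigma_n} \psi^{q,s_q}(\jjj),
\end{equation*}
and since $P(\psi^{q,s_q}) = 0$ with $\psi^{q,s_q}$ almost-multiplicative, $\sum_{\jjj \in \Sigma_n}\psi^{q,s_q}(\jjj)$ is bounded (up to a constant) by $e^{\eps' n}$ for any $\eps'>0$; arranging the cover so that all contributing words have comparable length $n(\delta) \to \infty$ as $\delta \to 0$, the right-hand side stays bounded. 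Hence the $(qs-\tau(q)+q\eps)$-dimensional Hausdorff content of $X_s$ is finite, so $\dimh(X_s) \le qs - \tau(q) + q\eps$, and letting $\eps \downarrow 0$ finishes the proof.

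The main obstacle is the bookkeeping that turns the metric condition $\mu(B(x,r)) \ge r^{s+\eps}$ for all $x \in X_s$ into the clean cylinder-level inequality $\mu([\iii]) \gtrsim \fii^s(\iii)$-type bound along a covering family, uniformly in $\iii$. This requires: (i) using the strong separation condition together with the bounded distortion/almost-multiplicativity supplied by domination to compare $\mu(B(\pi(\iii), c\,\alpha_1(\iii)))$ with $\mu([\iii]) = \pi_*\mu(\fii_\iii(X))$, absorbing the multiplicative constant into $\eps$ at the cost of a $\log$-correction that vanishes in the limit; and (ii) controlling the fact that the local-dimension hypothesis is only \emph{eventual} (holds for $r < r_0(x)$ with $r_0$ depending on $x$), which one handles in the usual way by a level-set decomposition $X_s = \bigcup_k \{x : \mu(B(x,r)) \ge r^{s+\eps} \text{ for } r<1/k\}$ and bounding each piece's Hausdorff content separately, since $\dimh$ is countably stable. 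Once this reduction is in place, everything else is the routine pressure estimate above, and the restriction $0<q<1$ is exactly what makes the exponentiation step go the right way without needing reverse H\"older or any separation beyond strong separation.
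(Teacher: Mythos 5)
Your approach has a genuine gap at its central step. You convert the local dimension hypothesis $\pi_*\mu(B(\pi(\iii),r)) \ge r^{s+\eps}$ at scale $r \approx \alpha_1(\iii|_n)$ into the per-cylinder bound $\mu([\iii|_n]) \gtrsim \alpha_1(\iii|_n)^{s+\eps}$, which requires the comparison $\pi_*\mu(B(\pi(\iii),c\,\alpha_1(\iii|_n))) \le C\mu([\iii|_n])$. This does \emph{not} follow from the strong separation condition together with domination: a ball whose radius is comparable to the \emph{long} semi-axis $\alpha_1(\iii|_n)$ can engulf many thin sibling cylinders (think of nearly stacked parallelograms of size $\alpha_1\times\alpha_2$ with $\alpha_2\ll\alpha_1$ that are disjoint but vertically within distance far smaller than $\alpha_1$), so the ball's mass can exceed $\mu([\iii|_n])$ by a factor exponential in $n$, which shifts the exponent and destroys the bound $qs-\tau(q)$. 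Indeed, this single-cylinder comparison is exactly hypothesis \eqref{it:upper-bound-large-q-assumption-1} of Proposition \ref{thm:upper-bound-large-q}, and in the paper it is only established under the \emph{projective} strong separation condition (in the proof of Theorem \ref{1.2}); the present proposition assumes only strong separation. The reason the statement nevertheless holds for $0<q<1$ — and the paper's proof, which simply invokes the ball-moment multifractal formalism of Olsen/Falconer and Lemmas 5.1 and 5.3 of Barral--Feng — is that one works with packing sums $\sum_i \mu(B(x_i,r))^q$ and uses the subadditivity $(\sum_j a_j)^q \le \sum_j a_j^q$ valid for $q\in(0,1)$ to pass from balls to cylinders \emph{without} any single-cylinder mass comparison. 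Your remark that $q<1$ ``makes the exponentiation go the right way'' is the right instinct, but in your write-up the inequality $\alpha_1^{(s+\eps)q}\le\mu([\iii])^q$ holds for every $q>0$; the place where $q<1$ is genuinely needed (many cylinders per ball) is skipped.

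There is a secondary error in your reduction for $s_q\ge 1$: $\fii^{s_q}(\iii)$ is \emph{not} comparable to a fixed power of $\alpha_1(\iii)$ under domination (the ratio $\log\alpha_2/\log\alpha_1$ varies with the word), and the inequality your exponent bookkeeping needs, namely $\alpha_1(\iii)^{-\tau(q)}\le\fii^{s_q}(\iii)^{1-q}$, i.e.\ $\alpha_1(\iii)^{s_q}\le\fii^{s_q}(\iii)$, actually reverses when $s_q>1$, since $\fii^{s_q}(\iii)=\alpha_1(\iii)\alpha_2(\iii)^{s_q-1}\le\alpha_1(\iii)^{s_q}$. So in that range the per-cylinder estimate fails even if one grants the ball-to-cylinder comparison; handling $s_q\in(1,2)$ requires covering at the scale $\alpha_2$ and controlling projections, as in case \eqref{it:upper-bound-large-q-assumption-2} of Proposition \ref{thm:upper-bound-large-q}. (Your fix for the ``eventual'' nature of the local dimension via a countable level-set decomposition is fine, but the summation over all word lengths also needs a strictly negative perturbed pressure, as in the $\alpha_m^\kappa\psi^{q,s_q}$ device of Proposition \ref{thm:upper-bound-large-q}, rather than the claim that all contributing words have comparable length.)
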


\begin{proof}
  In general, the result follows from \cite[Theorem 6.2(a)]{Falconer1999} and \cite[Proposition 2.5(iv)]{OLSEN199582}. Our version follows directly from \cite[Lemmas 5.1 and 5.3]{BarralFeng} and the comments after the lemmas. The restriction $0<q<1$ arises because, in the language of \cite[Lemma 5.1]{BarralFeng}, the inequality between $\tau(\mu,q)$ and $\tau(q)$ changes sign at $q=1$.
\end{proof}

The arguments of the following proposition are an adjustment of those in \cite[Theorem 4.1]{Lau}.

\begin{prop} \label{thm:upper-bound-large-q}
  Let $X$ be a dominated planar self-affine set satisfying the strong separation condition, $\mu \in \MM(\Sigma)$ be a fully supported quasi-Bernoulli measure, and $q > 1$. Suppose that one of the following two conditions is satisfied:
  \begin{enumerate}[(1)]
    \item \label{it:upper-bound-large-q-assumption-1}
    It holds that $0 \le \tau(q)/(q-1) \le 1$ and there exist constants $\roo_3>0$ and $C \ge 1$ such that
    \begin{equation*}
      \pi_*\mu(B(\pi(\iii),\roo_3\alpha_1(\iii|_n))) \le C\mu([\iii|_n])
    \end{equation*}
    for all $\iii \in \Sigma$ and $n \in \N$. 
    \item \label{it:upper-bound-large-q-assumption-2}
    It holds that $1 \le \tau(q)/(q-1) \le 2$ and there exists a constant $C \ge 1$ such that
    \begin{equation*}
      (\proj_{V^\bot})_*(\pi_*\mu)(\proj_{V^\bot}(B(x,r))) \le Cr
    \end{equation*}
    for all $x \in \R^2$, $V \in X_F$, and $r>0$.
  \end{enumerate}
  Then
  \begin{equation*}
    \dimh(X_s) \le qs-\tau(q)
  \end{equation*}
  for all $s \ge 0$.
\end{prop}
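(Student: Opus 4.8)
The plan is to produce, for each $s \ge 0$, an efficient cover of $X_s$ by cylinder images $\fii_\iii(X)$ whose diameters are comparable and then estimate the associated Hausdorff sum using the defining property $P(\psi^{q,\tau(q)/(q-1)}) = 0$. Fix $s \ge 0$ and $\delta > 0$. For $x \in X_s$ with $\dimloc(\mu,x) = s$, there is $r_x > 0$ so that $\mu(B(x,r)) \ge r^{s+\delta}$ for all $0 < r \le r_x$; passing to a countable subcover and a level set $X_{s,j}$ on which $r_x \ge 1/j$, it suffices to bound $\dimh(X_{s,j})$. For such $x$ and small scales we want to convert the measure lower bound into an upper bound on $\mu([\iii])$ summed over an appropriate stopping set, and this is where the two hypotheses enter: in case \eqref{it:upper-bound-large-q-assumption-1} we use the inclusion $\pi_*\mu(B(\pi(\iii),\roo_3\alpha_1(\iii|_n))) \le C\mu([\iii|_n])$ to compare the ball mass at scale $\alpha_1(\iii|_n) \asymp \diam\fii_{\iii|_n}(X)$ with $\mu([\iii|_n])$, while in case \eqref{it:upper-bound-large-q-assumption-2} we instead work at the smaller scale $\alpha_2(\iii|_n)$ and use Lemma \ref{thm:falconer-kempton-lemma} together with the bounded-projection hypothesis $(\proj_{V^\bot})_*(\pi_*\mu)(\proj_{V^\bot}(B(x,r))) \le Cr$ to get $\pi_*\mu(B(\pi(\iii),\roo\alpha_2(\iii|_n))) \le C\mu([\iii|_n]) \cdot \alpha_2(\iii|_n)/\alpha_1(\iii|_n)$, which is the natural substitute when $\tau(q)/(q-1) \in (1,2)$ and the relevant singular value function is $\fii^s(\iii) = \alpha_1(\iii)\alpha_2(\iii)^{s-1}$.

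Next I would set up a Vitali-type stopping time. For a small target scale $t$, let $\Sigma(t)$ be the antichain of words $\iii$ with $\diam\fii_\iii(X) \approx t$ (using $\alpha_1$ in case \eqref{it:upper-bound-large-q-assumption-1} or the pair $\alpha_1,\alpha_2$ in case \eqref{it:upper-bound-large-q-assumption-2}), which by the strong separation condition and bounded distortion gives a bounded-overlap cover of $X$ by the sets $\fii_\iii(X)$, $\iii \in \Sigma(t)$. Restricting to those $\iii$ whose image meets $X_{s,j}$, the measure lower bound reads $\mu([\iii]) \gtrsim t^{s+\delta}$ for these $\iii$, hence $\mu([\iii])^{1-q} \lesssim t^{(s+\delta)(1-q)}$ since $q > 1$. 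Writing $u = qs - \tau(q) + \text{(error in $\delta$)}$, the Hausdorff sum is
\begin{equation*}
  \sum_{\iii \in \Sigma(t), \ \fii_\iii(X) \cap X_{s,j} \ne \emptyset} (\diam\fii_\iii(X))^{u} \lesssim t^{u - q(s+\delta) + \text{(shift)}} \sum_{\iii \in \Sigma(t)} \mu([\iii])^{q} \fii^{s_q}(\iii)^{1-q},
\end{equation*}
where $s_q = \tau(q)/(q-1)$, after absorbing $\mu([\iii])^{1-q}$ against $\mu([\iii])^{-q}\cdot\mu([\iii])$ via the pointwise bound and matching powers of $t$ so that the surviving exponent of $\psi^{q,s_q}(\iii) = \mu([\iii])^q\fii^{s_q}(\iii)^{1-q}$ is exactly $1$. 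Since $\mu$ is almost-multiplicative and $\psi^{q,s_q}$ is almost-multiplicative with $P(\psi^{q,s_q}) = 0$, the sum $\sum_{\iii \in \Sigma_n}\psi^{q,s_q}(\iii)$ is bounded above and below by constants uniformly in $n$, and passing from words of fixed length $n$ to the stopping set $\Sigma(t)$ costs only a multiplicative constant by bounded distortion of $\alpha_1$ (domination gives the almost-multiplicativity needed). Letting $t \downarrow 0$ forces $\HH^{u}(X_{s,j}) < \infty$, hence $\dimh(X_{s,j}) \le u$, and sending $\delta \downarrow 0$ and taking the countable union over $j$ gives $\dimh(X_s) \le qs - \tau(q)$.

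The main obstacle is bookkeeping the exponents correctly so that the power of the target scale $t$ cancels exactly and the remaining sum is the zero-pressure sum $\sum \psi^{q,s_q}(\iii)$, particularly in case \eqref{it:upper-bound-large-q-assumption-2}, where the stopping scale, the ball radius $\roo\alpha_2(\iii|_n)$, the diameter $\asymp\alpha_1(\iii|_n)$, and the projection factor $\alpha_2(\iii|_n)/\alpha_1(\iii|_n)$ must all be reconciled; this is precisely where one uses that $\fii^{s}(\iii) = \alpha_1(\iii)\alpha_2(\iii)^{s-1} = \alpha_1(\iii)^{2-s}\lvert\det A_\iii\rvert^{s-1}$ and that $s_q \in [1,2]$. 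A secondary point requiring care is that the covering must be by sets of genuinely comparable size, so one restricts to a level set where the distortion $\alpha_1(\iii|_n)/\alpha_2(\iii|_n)$, though tending to $\infty$ by domination, grows slowly enough relative to $n$; this is handled exactly as in the proof of Theorem \ref{thm:exact-dim} via the density-one subsequence argument, or alternatively by the cruder bound $\alpha_2 \le C\tau^n\alpha_1$ which suffices here since we only need an upper bound on dimension. The reduction to countably many level sets $X_{s,j}$ and the extraction of the stopping antichain are the routine adaptations of \cite[Theorem 4.1]{Lau} referenced in the statement.
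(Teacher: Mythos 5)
Your case \eqref{it:upper-bound-large-q-assumption-1} reduction is essentially sound and close in spirit to the paper's argument (the paper uses a Vitali extraction of disjoint balls of radius $\roo_3\alpha_1(\iii|_n)$ rather than an antichain, but the exponent arithmetic is the same). The genuine gap is in case \eqref{it:upper-bound-large-q-assumption-2}: the inequality you actually display, a sum of $(\diam\fii_\iii(X))^{u}$ over an antichain of cylinders with $\diam\fii_\iii(X)\approx\alpha_1(\iii)\approx t$, cannot be bounded by the zero-pressure sum $\sum\psi^{q,s_q}$. Indeed, the only ball-to-cylinder comparison available under hypothesis \eqref{it:upper-bound-large-q-assumption-2} (via Lemma \ref{thm:falconer-kempton-lemma}) lives at the \emph{small} scale $\roo\alpha_2(\jjj)$, giving $\mu([\jjj])\gtrsim\alpha_2(\jjj)^{s+\delta-1}\alpha_1(\jjj)$, and if you then charge each cylinder its full diameter $\alpha_1(\jjj)^{u}$ you are left needing $\alpha_1(\jjj)^{u-1}\le\alpha_2(\jjj)^{u-1}$, which is false whenever $u>1$ --- precisely the regime where case \eqref{it:upper-bound-large-q-assumption-2} is invoked ($s_q$, $u\in(1,2)$). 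The per-cylinder deficit is the factor $(\alpha_1(\jjj)/\alpha_2(\jjj))^{u-1}$, which is unbounded by domination. You flag this reconciliation as ``the main obstacle'' but never resolve it, and the reduction as written fails. (Your fallback remarks do not repair it: the density-one subsequence argument from Theorem \ref{thm:exact-dim} is a statement about $\nu$-typical points and has no bearing on the whole level set $X_s$, and the crude bound $\alpha_2\le C\tau^n\alpha_1$ goes in the wrong direction.)

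The missing idea is that in case \eqref{it:upper-bound-large-q-assumption-2} one must cover $X_s$ inside each cylinder $\fii_\jjj(X)$ by pieces of size $\approx\alpha_2(\jjj)$, not by the cylinder itself. The paper does this by taking balls $B(\pi(\iii),\roo_1\alpha_2(\iii|_n))$ centered at points of $X_s$, extracting a disjoint subfamily by the Vitali covering theorem, and bounding the number of such balls attached to a fixed $\jjj\in\Sigma_n$ by a volume argument, $\#\VV_\jjj\le c\,\alpha_1(\jjj)/\alpha_2(\jjj)$; this count, multiplied by the factor $(\alpha_2(\jjj)/\alpha_1(\jjj))^{q}$ coming from Lemma \ref{thm:falconer-kempton-lemma} together with the bounded projected density, leaves exactly $(\alpha_2/\alpha_1)^{q-1}$, which recombines with $\alpha_2(\jjj)^{-\tau(q)}$ to give $\fii^{s_q}(\jjj)^{1-q}$, so the total is controlled by $\sum_{\jjj\in\Sigma_n}\psi^{q,s_q}(\jjj)$. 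A further small point: because the Vitali family (or any cover built from the local-dimension condition) mixes words of many lengths, one cannot simply invoke uniform boundedness of the fixed-level sums; the paper inserts a perturbation $\alpha_m^{\kappa}\psi^{q,s_q}$ with strictly negative pressure so that the level-$n$ contributions decay like $e^{n\gamma}$, $\gamma<0$, and are summable over $n$ (your single-scale antichain could instead absorb a logarithmic loss, but you would need to say so). As written, your proof establishes the claim only under hypothesis \eqref{it:upper-bound-large-q-assumption-1}.
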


\begin{proof}
  Fix $q > 1$ and, recalling Lemma \ref{thm:pressure-convex}(4), let $s_q = \tau(q)/(q-1) \ge 0$ be such that $P(\psi^{q,s_q})=0$. Observe that if the assumption \ref{it:upper-bound-large-q-assumption-1} holds then $s_q \in (0,1)$, whereas if the assumption \ref{it:upper-bound-large-q-assumption-2} holds then $s_q \in (1,2)$. Let $\kappa > 0$ and notice that the potential $\alpha_m^\kappa\psi^{q,s_q}$ is almost-multiplicative for both $m \in \{1,2\}$. Under the assumption \ref{it:upper-bound-large-q-assumption-1}, we choose $m=1$, and under the assumption \ref{it:upper-bound-large-q-assumption-2}, we let $m=2$. If $\nu \in \MM_\sigma(\Sigma)$ is the equilibrium state for $\alpha_m^\kappa\psi^{q,s_q}$, then
  \begin{equation*}
      P(\alpha_m^\kappa\psi^{q,s_q}) = h(\nu)+\Lambda(\psi^{q,s_q},\nu) + \kappa\lambda_m(\nu) \le P(\psi^{q,s_q})+\kappa\lambda_m(\nu) < 0.
  \end{equation*}
  Fix $\kappa\lambda_2(\nu) \le \kappa\lambda_1(\nu) < \gamma < 0$ and observe that there is $n_\kappa \in \N$ such that
  \begin{equation} \label{eq:upper-bound-large-q-pressure}
    \sum_{\iii \in \Sigma_n} \alpha_m(\iii)^\kappa \mu([\iii])^q \fii^{s_q}(\iii)^{1-q} < e^{n\gamma}
  \end{equation}
  for all $n \ge n_\kappa$ and both $m \in \{1,2\}$.

  Fix $\delta > 0$ and choose $n_{\delta,\kappa} \ge n_\kappa$ such that $\alpha_m(\iii) < \delta$ whenever $|\iii| \ge n_{\delta,\kappa}$. Under the assumption \ref{it:upper-bound-large-q-assumption-1}, we denote $\roo=\roo_3>0$, and under the assumption \ref{it:upper-bound-large-q-assumption-2}, let $\roo>0$ be $\roo_1$ from Lemma \ref{thm:falconer-kempton-lemma}. Fix $s \ge 0$ and $\eps>0$ and notice that for each $\iii \in \pi^{-1}(X_s)$ there is $n_\iii \in \N$ such that
  \begin{equation*}
    \frac{\log \pi_*\mu(B(\pi(\iii),\roo\alpha_m(\iii|_n)))}{\log \alpha_m(\iii|_n)} < s + \eps
  \end{equation*}
  for all $n \ge n_\iii$. Therefore, the family
  \begin{align*}
    \{B(\pi(\iii),\roo\alpha_m(\iii|_n)) : \;&\iii \in \Sigma \text{ and } n \ge n_{\delta,\kappa} \text{ are such that} \\ 
    &\pi_*\mu(B(\pi(\iii),\roo\alpha_m(\iii|_n))) > \alpha_m(\iii|_n)^{s + \eps} \}
  \end{align*}
  is a Vitali covering of $X_s$. By the Vitali covering theorem (see e.g.\ \cite[Theorem 1.10]{Fractalsets}), the above family contains a countable sub-family $\VV$ of pairwise disjoint balls satisfying
  \begin{equation*}
    \sum_{B \in \VV} \diam(B)^t = \infty \quad \text{or} \quad \HH^t\biggl(X_s \setminus \bigcup_{B \in \VV} B\biggr) = 0,
  \end{equation*}
  where $t = (s+\eps)q - \tau(q) + \kappa$. If we can show that there is a constant $M \in \R$ not depending on $\delta$ such that
  \begin{equation} \label{eq:upper-bound-large-q-sum-finite}
    \sum_{B \in \VV} \diam(B)^t \le M < \infty,
  \end{equation}
  then
  \begin{equation*}
    \HH^t_\delta(X_s) \le \HH^t_\delta\biggl(X_s \cap \bigcup_{B \in \VV} B\biggr) + \HH^t_\delta\biggl(X_s \setminus \bigcup_{B \in \VV} B\biggr) \le \HH^t_\delta\biggl(\bigcup_{B \in \VV} B\biggr) \le M.
  \end{equation*}
  By letting $\delta \downarrow 0$, we get $\HH^t(X_s) \le M < \infty$ and $\dimh(X_s) \le t = (s+\eps)q - \tau(q) + \kappa$. The claim follows by letting $\eps,\kappa \downarrow 0$.

  Let us now prove \eqref{eq:upper-bound-large-q-sum-finite}. If $\jjj \in \Sigma_n$ and $n \ge n_{\delta,\kappa}$, then we write
  \begin{equation*}
    \VV_\jjj = \{B(\pi(\iii_k),\roo\alpha_m(\jjj)) \in \VV : k \in \{1,\ldots,\# \VV_\jjj\}\},
  \end{equation*}
  Since the elements of $\VV$ are pairwise disjoint, a simple volume argument shows that, under the assumption \ref{it:upper-bound-large-q-assumption-1}, $\# \VV_\jjj \le c$, and under the assumption \ref{it:upper-bound-large-q-assumption-2}, $\# \VV_\jjj \le c\frac{\alpha_1(\jjj)}{\alpha_2(\jjj)}$ for some constant $c>0$ not depending on $\jjj$. Notice that, as $q>0$, the definition of $\VV$ yields
  \begin{equation} \label{eq:upper-bound-large-q-calc1}
  \begin{split}
    \sum_{\jjj \in \Sigma_n} \sum_{B \in \VV_\jjj} \diam(B)^t &\le \sum_{\jjj \in \Sigma_n} \sum_{B \in \VV_\jjj} \diam(B)^t \biggl(\frac{\pi_*\mu(B)}{\alpha_m(\jjj)^{s + \eps}}\biggr)^q \\ 
    &= (2\roo)^t \sum_{\jjj \in \Sigma_n} \sum_{B \in \VV_\jjj} \alpha_m(\jjj)^{(s+\eps)q - \tau(q) + \kappa} \biggl(\frac{\pi_*\mu(B)}{\alpha_m(\jjj)^{s + \eps}}\biggr)^q \\ 
    &= (2\roo)^t \sum_{\jjj \in \Sigma_n} \sum_{B \in \VV_\jjj} \alpha_m(\jjj)^{-\tau(q)+\kappa} \pi_*\mu(B)^q
  \end{split}
  \end{equation}
  for all $n \ge n_{\delta,\kappa}$.

  By \eqref{eq:upper-bound-large-q-calc1}, the assumption \ref{it:upper-bound-large-q-assumption-1}, the fact that $\#\VV_\jjj \le c$, and \eqref{eq:upper-bound-large-q-pressure}, we have
  \begin{align*}
    \sum_{\jjj \in \Sigma_n} \sum_{B \in \VV_\jjj} \diam(B)^t &\le (2\roo)^t \sum_{\jjj \in \Sigma_n} \sum_{k=1}^{\#\VV_\jjj} \alpha_1(\jjj)^{-\tau(q)+\kappa} \pi_*\mu(B(\pi(\iii_k),\roo\alpha_1(\jjj)))^q \\ 
    &\le C^q(2\roo)^t \sum_{\jjj \in \Sigma_n} \sum_{k=1}^{\#\VV_\jjj} \alpha_1(\jjj)^{-\tau(q)+\kappa} \mu([\jjj])^q \\ 
    &\le cC^q(2\roo)^t \sum_{\jjj \in \Sigma_n} \alpha_1(\jjj)^\kappa \mu([\jjj])^q \fii^{s_q}(\jjj)^{1-q} \\ 
    &\le cC^q(2\roo)^t e^{n\gamma}.
  \end{align*}
  On the other hand, by Lemma \ref{thm:falconer-kempton-lemma} and the assumption \ref{it:upper-bound-large-q-assumption-2}, there are constants $\tilde C,C \ge 1$ such that
  \begin{equation} \label{eq:upper-bound-large-q-calc2}
  \begin{split}
    \pi_*\mu(&B(\pi(\iii),\roo\alpha_2(\iii|_n))) \\
    &\le \tilde C\mu([\iii|_n]) \bigl(\proj_{V^\bot}\bigr)_*(\pi_*\mu)\biggl(\proj_{V^\bot}\biggl(B\biggl(\pi(\sigma^n\iii),\frac{\alpha_2(\iii|_n)}{\alpha_1(\iii|_n)}\biggr)\biggr)\biggr) \\ 
    &\le C\mu([\iii|_n]) \frac{\alpha_2(\iii|_n)}{\alpha_1(\iii|_n)}
  \end{split}
  \end{equation}
  for all $\iii \in \Sigma$, $V \in X_F$, and $n \in \N$. Therefore, by \eqref{eq:upper-bound-large-q-calc1}, \eqref{eq:upper-bound-large-q-calc2}, the fact that $\#\VV_\jjj \le c\frac{\alpha_1(\jjj)}{\alpha_2(\jjj)}$, and \eqref{eq:upper-bound-large-q-pressure}, we similarly get
  \begin{align*}
    \sum_{\jjj \in \Sigma_n} \sum_{B \in \VV_\jjj} \diam(B)^t &\le (2\roo)^t \sum_{\jjj \in \Sigma_n} \sum_{k=1}^{\#\VV_\jjj} \alpha_2(\jjj)^{-\tau(q)+\kappa} \pi_*\mu(B(\pi(\iii_k),\roo\alpha_2(\jjj)))^q \\ 
    &\le C^q(2\roo)^t \sum_{\jjj \in \Sigma_n} \sum_{k=1}^{\#\VV_\jjj} \alpha_2(\jjj)^{-\tau(q)+\kappa} \mu([\jjj])^q \biggl(\frac{\alpha_2(\jjj)}{\alpha_1(\jjj)}\biggr)^q \\ 
    &\le cC^q(2\roo)^t \sum_{\jjj \in \Sigma_n} \alpha_2(\jjj)^\kappa \mu([\jjj])^q \fii^{s_q}(\jjj)^{1-q} \\ 
    &\le cC^q(2\roo)^t e^{n\gamma}.
  \end{align*}
  Consequently, under either assumption,
  \begin{align*}
    \sum_{B \in \VV} \diam(B)^t &= \sum_{n=1}^\infty \sum_{\jjj \in \Sigma_n} \sum_{B \in \VV_\jjj} \diam(B)^t \\ 
    &\le c(2\roo)^t(2C)^q \sum_{n=1}^\infty e^{n\gamma} = c(2\roo)^t(2C)^q \frac{e^{\gamma}}{1-e^{\gamma}}.
  \end{align*}
  Since the upper bound above does not depend on $\delta$, we have shown \eqref{eq:upper-bound-large-q-sum-finite} and finished the proof.
\end{proof}

\begin{prop} \label{thm:main-large-q}
  Let $X$ be a dominated planar self-affine set satisfying the strong separation condition such that $X_F$ is not a singleton, $\mu \in \MM(\Sigma)$ be a fully supported quasi-Bernoulli measure, and let $q > 0$ be such that $q \ne 1$. If $\tau(q)/(q-1)$ and $q\tau'(q)-\tau(q)$ are contained in $(0,1)$, and there exist constants $\roo_3>0$ and $C \ge 1$ such that
    \begin{equation} \label{eq:main-large-q1}
      \pi_*\mu(B(\pi(\iii),\roo_3\alpha_1(\iii|_n))) \le C\mu([\iii|_n])
    \end{equation}
    for all $\iii \in \Sigma$ and $n \in \N$, then
  \begin{equation*}
    \dimh(X_{\tau'(q)}) \ge q\tau'(q)-\tau(q).
  \end{equation*}
\end{prop}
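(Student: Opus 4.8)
The plan is to follow the proof of Proposition~\ref{thm:main-small-q} almost verbatim, the only change being that when $q>1$ one invokes Proposition~\ref{thm:upper-bound-large-q}\ref{it:upper-bound-large-q-assumption-1} in place of Proposition~\ref{thm:upper-bound-small-q}. Hypothesis \eqref{eq:main-large-q1} is exactly assumption \ref{it:upper-bound-large-q-assumption-1} of Proposition~\ref{thm:upper-bound-large-q}, and $\tau(q)/(q-1)\in(0,1)\subset[0,1]$, so that proposition is available; for $0<q<1$ the statement is already covered by Proposition~\ref{thm:main-small-q} and one uses Proposition~\ref{thm:upper-bound-small-q} instead.

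In detail, I would let $\nu\in\MM_\sigma(\Sigma)$ be the equilibrium state for $\psi^{q,\tau(q)/(q-1)}$. Since $\nu$ is quasi-Bernoulli and both $\tau(q)/(q-1)$ and $q\tau'(q)-\tau(q)$ lie in $(0,1)$, \eqref{eq:BHR} together with Theorem~\ref{thm:dim-tau-derivative} gives $\ldimh(\pi_*\nu)=\min\{2,\diml(\nu)\}=\diml(\nu)=q\tau'(q)-\tau(q)$. As in the proof of Proposition~\ref{thm:main-small-q}, Theorem~\ref{thm:exact-dim} (more precisely the ergodic-theoretic argument in its proof, which shows that $\iii\mapsto\dimloc(\pi_*\mu,\pi(\iii))$ is $\nu$-a.e.\ defined and constant) yields some $0\le t\le 2$ with $\dimloc(\pi_*\mu,\pi(\iii))=t$ for $\nu$-a.e.\ $\iii$, so $\pi_*\nu(X_t)=1$. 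Exactly as in \eqref{eq:main-small-q2}, the inclusion $\pi([\iii|_n])=\fii_{\iii|_n}(X)\subset B(\pi(\iii),\diam(X)\alpha_1(\iii|_n))$ gives $\pi_*\mu(B(\pi(\iii),\diam(X)\alpha_1(\iii|_n)))\ge\mu([\iii|_n])$, and evaluating the limit defining $\dimloc(\pi_*\mu,\pi(\iii))=t$ along the radii $\diam(X)\alpha_1(\iii|_n)\downarrow 0$, Lemma~\ref{thm:fekete} and Proposition~\ref{thm:spectrum-derivative} give $t\le -h(\mu,\nu)/\lambda_1(\nu)=\tau'(q)$ for $\nu$-a.e.\ $\iii$. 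Finally, since $\pi_*\nu(X_t)=1>0$ we have $\dimh(X_t)\ge\ldimh(\pi_*\nu)=q\tau'(q)-\tau(q)$, while Proposition~\ref{thm:upper-bound-large-q}\ref{it:upper-bound-large-q-assumption-1} (for $q>1$; Proposition~\ref{thm:upper-bound-small-q} for $0<q<1$) gives $\dimh(X_t)\le qt-\tau(q)$; hence $q\tau'(q)-\tau(q)\le qt-\tau(q)$, so $t\ge\tau'(q)$ as $q>0$. Thus $t=\tau'(q)$, and $\dimh(X_{\tau'(q)})=\dimh(X_t)\ge q\tau'(q)-\tau(q)$, which is the claim.

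The remaining checks are routine: that evaluating the local dimension along the subsequence of scales $\diam(X)\alpha_1(\iii|_n)$ recovers $t$ (the limit exists by exact dimensionality, the consecutive ratios $\log\alpha_1(\iii|_{n+1})/\log\alpha_1(\iii|_n)\to 1$ for $\nu$-a.e.\ $\iii$ by almost-multiplicativity of $\alpha_1$ and domination, and $\diam(X)\alpha_1(\iii|_n)\to 0$), and that the constant $t$ from the exact-dimensionality argument is well-defined $\nu$-a.e. There is no genuinely new obstacle compared with Proposition~\ref{thm:main-small-q}: the only content is recognising that hypothesis \eqref{eq:main-large-q1} is precisely what is needed to apply the large-$q$ upper bound of Proposition~\ref{thm:upper-bound-large-q}, so that the same chain of inequalities closes up when $q>1$.
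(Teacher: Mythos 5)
Your argument is correct, but it closes the proof by a different mechanism than the paper does. The paper also follows Proposition~\ref{thm:main-small-q}, but it spends the hypothesis \eqref{eq:main-large-q1} inside \eqref{eq:main-small-q2}: combined with the trivial bound $\pi_*\mu(B(\pi(\iii),\diam(X)\alpha_1(\iii|_n)))\ge\mu([\iii|_n])$, the assumption gives a two-sided comparison between $\pi_*\mu(B(\pi(\iii),r))$ and $\mu([\iii|_n])$ at scales $r$ comparable to $\alpha_1(\iii|_n)$, so the limit in \eqref{eq:main-small-q2} becomes an equality and $\dimloc(\pi_*\mu,\pi(\iii))=\tau'(q)$ for $\nu$-almost every $\iii$ directly; hence $\pi_*\nu(X_{\tau'(q)})=1$ and $\dimh(X_{\tau'(q)})\ge\ldimh(\pi_*\nu)=q\tau'(q)-\tau(q)$, with no multifractal upper bound invoked at all (and, as a by-product, the appeal to Theorem~\ref{thm:exact-dim} for the a priori existence of the constant $t$ is essentially dispensable, since the squeeze itself produces the local dimension). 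You instead keep only the one-sided estimate $t\le\tau'(q)$ and recover $t\ge\tau'(q)$ by sandwiching with the upper bound $\dimh(X_t)\le qt-\tau(q)$, correctly observing that \eqref{eq:main-large-q1} is verbatim assumption~\ref{it:upper-bound-large-q-assumption-1} of Proposition~\ref{thm:upper-bound-large-q} and that $\tau(q)/(q-1)\in(0,1)$, so the upper bound is available for $q>1$; for $0<q<1$ you fall back on Propositions~\ref{thm:upper-bound-small-q} and \ref{thm:main-small-q}. All the steps you use, namely \eqref{eq:BHR} together with Theorem~\ref{thm:dim-tau-derivative}, Proposition~\ref{thm:spectrum-derivative} with Lemma~\ref{thm:fekete}, and the chain $q\tau'(q)-\tau(q)=\ldimh(\pi_*\nu)\le\dimh(X_t)\le qt-\tau(q)$ with $q>0$, are valid under the stated hypotheses, so there is no gap. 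The trade-off is that your route imports the full strength of Proposition~\ref{thm:upper-bound-large-q} and still leans on the $\nu$-almost-everywhere existence of a constant local dimension $t$ (the same appeal made in the proof of Proposition~\ref{thm:main-small-q}), whereas the paper's use of \eqref{eq:main-large-q1} makes the statement a two-line upgrade of the small-$q$ argument, self-contained in both respects.
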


\begin{proof}
  Let $\nu$ be the equilibrium state for $\psi^{q,\tau(q)/(q-1)}$. The containment $\fii_{\iii|_n}(X) \subset B(\pi(\iii),\diam(X)\alpha_1(\iii|_n))$ and \eqref{eq:main-large-q1} imply
  \begin{equation*}
    \frac{1}{C}\pi_*\mu(B(\pi(\iii),\roo_3\alpha_1(\iii|_n))) \le \mu([\iii|_n]) \le \pi_*\mu(B(\pi(\iii),\diam(X)\alpha_1(\iii|_n))).
  \end{equation*}
  Since $\log C/\log \alpha_1(\iii|_n) \to 0$ for $\nu$-almost all $\iii$, Lemma \ref{thm:fekete} and Proposition \ref{thm:spectrum-derivative} yield $\dimloc(\pi_*\mu,\pi(\iii)) = \tau'(q)$ for $\nu$-almost all $\iii$. Hence $\pi_*\nu(X_{\tau'(q)})=1$. Since $\nu$ is quasi-Bernoulli, \eqref{eq:BHR} and Theorem \ref{thm:dim-tau-derivative} imply $\ldimh(\pi_*\nu) = q\tau'(q)-\tau(q)$. Therefore
  \begin{equation*}
    \dimh(X_{\tau'(q)}) \ge \ldimh(\pi_*\nu) = q\tau'(q)-\tau(q)
  \end{equation*}
  and the proof is finished.
\end{proof}

\begin{prop} \label{thm:main-small-q-large-other}
  Let $X$ be a dominated planar self-affine set satisfying the strong separation condition such that $X_F$ is not a singleton, $\mu \in \MM(\Sigma)$ be a fully supported quasi-Bernoulli measure, and $q > 0$ is such that $q \ne 1$. If $\tau(q)/(q-1)$ and $q\tau'(q)-\tau(q)$ are contained in $(1,2)$, and
  \begin{equation} \label{eq:main-small-q-large-other1}
    \dimloc((\proj_{V^\bot})_*(\pi_*\mu),\proj_{V^\bot}(\pi(\iii))) = 1
  \end{equation}
  for $\nu \times \nu_F$-almost all $(\iii,V) \in \Sigma \times \RP$, where $\nu \in \MM_\sigma(\Sigma)$ is the equilibrium state for $\psi^{q,\tau(q)/(q-1)}$, then
  \begin{equation*}
    \dimh(X_{\tau'(q)}) \ge q\tau'(q)-\tau(q).
  \end{equation*}
\end{prop}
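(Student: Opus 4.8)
The plan is to follow the scheme of Propositions \ref{thm:main-small-q} and \ref{thm:main-large-q}, the point being that the hypothesis \eqref{eq:main-small-q-large-other1} is exactly the projection assumption needed to invoke Theorem \ref{thm:exact-dim}. Write $s_q = \tau(q)/(q-1) \in (1,2)$ and $D = q\tau'(q)-\tau(q) \in (1,2)$, and let $\nu$ be the (ergodic, fully supported, quasi-Bernoulli) equilibrium state for $\psi^{q,s_q}$. Since $s_q$ and $D$ lie in the same interval $(1,2)$, Theorem \ref{thm:dim-tau-derivative} gives $\diml(\nu) = D$; as $X$ is dominated, irreducible, and satisfies the strong separation condition, \eqref{eq:BHR} then yields $\ldimh(\pi_*\nu) = \min\{2,\diml(\nu)\} = D$.

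Next, by \eqref{eq:main-small-q-large-other1} and Theorem \ref{thm:exact-dim}, there is a constant $t := \min\{2,\diml(\mu,\nu)\} \in [0,2]$ with $\dimloc(\pi_*\mu,\pi(\iii)) = t$ for $\nu$-almost every $\iii \in \Sigma$. Consequently $\pi_*\nu(X_t) = 1$, and therefore $\dimh(X_t) \ge \ldimh(\pi_*\nu) = D$. It remains to identify $t$ with $\tau'(q)$. Since $s_q \in (1,2)$, Proposition \ref{thm:spectrum-derivative} gives $\tau'(q) = 1 - (h(\mu,\nu)+\lambda_1(\nu))/\lambda_2(\nu)$, which is one of the three quantities whose minimum defines $\diml(\mu,\nu)$; hence $t \le \diml(\mu,\nu) \le \tau'(q)$.

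If $0 < q < 1$, I would finish as in Proposition \ref{thm:main-small-q}: Proposition \ref{thm:upper-bound-small-q} gives $\dimh(X_t) \le qt-\tau(q) \le q\tau'(q)-\tau(q) = D$, using $t \le \tau'(q)$ and $q > 0$. Combined with $\dimh(X_t) \ge D$ all inequalities become equalities, so in particular $qt-\tau(q) = D = q\tau'(q)-\tau(q)$, whence $t = \tau'(q)$ and $\dimh(X_{\tau'(q)}) = D$. If $q > 1$ the upper bound of Proposition \ref{thm:upper-bound-large-q}(2) is unavailable (it needs a stronger projection hypothesis), so instead I would first verify $\tau'(q) \in (1,2)$ directly. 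Feeding the equilibrium identity $0 = h(\nu) - qh(\mu,\nu) + (1-q)\bigl(\lambda_1(\nu)+(s_q-1)\lambda_2(\nu)\bigr)$ and the relation $D = \diml(\nu) = 1-(h(\nu)+\lambda_1(\nu))/\lambda_2(\nu)$ into the expression for $\tau'(q)$ from Proposition \ref{thm:spectrum-derivative} yields $\tau'(q) = 1 + \bigl((D-1)+(q-1)(s_q-1)\bigr)/q$, and since $0 < D-1 < 1$, $0 < s_q-1 < 1$ and $q-1 > 0$ the numerator lies strictly between $0$ and $q$, so $\tau'(q) \in (1,2)$. Rearranging the formula $\tau'(q) = 1-(h(\mu,\nu)+\lambda_1(\nu))/\lambda_2(\nu)$ then shows $h(\mu,\nu)+\Lambda(\fii^{\tau'(q)},\nu) = 0$, so $\diml(\mu,\nu) = \tau'(q) < 2$ and $t = \tau'(q)$; thus $\pi_*\nu(X_{\tau'(q)}) = 1$ and $\dimh(X_{\tau'(q)}) \ge D$.

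The delicate point in both cases is the exceptional possibility $\diml(\mu,\nu) \ge 2$, where Theorem \ref{thm:exact-dim} only concentrates $\pi_*\nu$ on $X_2$ rather than on $X_{\tau'(q)}$. For $q > 1$ this is excluded arithmetically by the hypotheses $\tau(q)/(q-1) < 2$ and $q\tau'(q)-\tau(q) < 2$; for $0 < q < 1$ it is handled indirectly, the upper bound of Proposition \ref{thm:upper-bound-small-q} collapsing the sandwich and thereby forcing both $t = \tau'(q)$ and, a posteriori, $\tau'(q) \le 2$.
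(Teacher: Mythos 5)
Your argument is correct and follows the same skeleton as the paper's proof: first $\ldimh(\pi_*\nu)=\diml(\nu)=q\tau'(q)-\tau(q)$ via \eqref{eq:BHR} and Theorem \ref{thm:dim-tau-derivative}, then Theorem \ref{thm:exact-dim} (whose hypothesis is exactly \eqref{eq:main-small-q-large-other1}) to give $\pi_*\nu$ full measure on the level set $X_t$ with $t=\min\{2,\diml(\mu,\nu)\}$. Where you genuinely differ is in identifying $t$ with $\tau'(q)$: the paper does this in one line by invoking Proposition \ref{thm:spectrum-derivative} to write $\min\{2,\diml(\mu,\nu)\}=\min\{2,\tau'(q)\}$ and then tacitly treats this as $\tau'(q)$, whereas you justify the identification explicitly --- for $q>1$ by the identity $\tau'(q)=1+\bigl((q\tau'(q)-\tau(q)-1)+(q-1)(s_q-1)\bigr)/q$, which places $\tau'(q)$ in $(1,2)$ and hence gives $\diml(\mu,\nu)=\tau'(q)$, and for $0<q<1$ by a sandwich with the upper bound of Proposition \ref{thm:upper-bound-small-q} that forces $t=\tau'(q)$ a posteriori. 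Both supplementary arguments check out (the algebra is right, and the sandwich is legitimate since Proposition \ref{thm:upper-bound-small-q} applies to every $s\ge 0$), and what they buy is an explicit treatment of the a priori possibility $\diml(\mu,\nu)\ge 2$, i.e.\ $\tau'(q)>2$, which the paper's terser proof passes over; your version is therefore, if anything, more complete than the one in the paper.
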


\begin{proof}
  Set $s=\tau(q)/(q-1)$ and $t=q\tau'(q)-\tau(q)$. By assumption, $s,t \in (1,2)$ and therefore
  \begin{equation*}
    \tau'(q) = \frac{t+(q-1)s}{q} \in (1,2).
  \end{equation*}
  Since $\nu$ is quasi-Bernoulli, \eqref{eq:BHR} and Theorem \ref{thm:dim-tau-derivative} imply that
  \begin{equation*}
    \ldimh(\pi_*\nu) = \diml(\nu) = q\tau'(q)-\tau(q).
  \end{equation*}
  Proposition \ref{thm:spectrum-derivative} gives $\diml(\mu,\nu) = \tau'(q)$. Therefore, the assumption \eqref{eq:main-small-q-large-other1} and Theorem \ref{thm:exact-dim} imply that
  \begin{equation*}
    \dimloc(\pi_*\mu,\pi(\iii)) = \min\{2,\diml(\mu,\nu)\} = \tau'(q)
  \end{equation*}
  for $\nu$-almost all $\iii \in \Sigma$. Thus $\pi_*\nu(X_{\tau'(q)})=1$ and so $\dimh(X_{\tau'(q)}) \ge \ldimh(\pi_*\nu)$ as claimed.
\end{proof}

We are now ready to prove the main results stated in the introduction.

\begin{proof}[Proof of Theorem \ref{1.2}]
  By Proposition \ref{thm:upper-bound-small-q}, we have
  \begin{equation} \label{eq:main1-bound}
    \dimh(X_{\tau'(q)}) \le q\tau'(q)-\tau(q)
  \end{equation}
  whenever $0<q<1$. The projective strong separation condition \eqref{eq:PSSC} and the compactness of both the self-affine set $X$ and the set of Furstenberg directions $X_F$ guarantee that
  \begin{equation*}
    \roo = \min_{V \in X_F} \min_{i \ne j} \dist(\proj_{V^\bot}(\conv(\fii_i(X))), \proj_{V^\bot}(\conv(\fii_i(X)))) > 0.
  \end{equation*}
  Fix $V \in X_F$ and let us show that
  \begin{equation} \label{eq:main1-iteration}
    \dist(\proj_{V^\bot}(\conv(\fii_\iii(X))), \proj_{V^\bot}(\conv(\fii_\iii(X)))) \ge \|\proj_{V^\bot}A_\iii\| \roo
  \end{equation}
  for all $\iii, \jjj \in \Sigma_n$ with $\iii \ne \jjj$. Write $\hhh = \iii \wedge \jjj$ and note that $\iii|_{|\hhh|+1} = \hhh i$ and $\jjj|_{|\hhh|+1} = \hhh j$ where $i \ne j$. Since $(\proj_{V^\bot}A)^\top = A^\top\proj_{V^\bot}$ for all $A \in GL_2(\R)$, we have
  \begin{equation*}
    \proj_{V^\bot}(\fii_\hhh(x)) = \pm\|A_\hhh^\top|V^\bot\|\proj_{V^\bot}(x) + \proj_{V^\bot}(\fii_\hhh(0)),
  \end{equation*}
  where $\pm$ indicates that the equality holds for one of the signs. Define a mapping $g \colon V^\bot \to V^\bot$ by setting
  \begin{equation*}
    g(x) = \pm\|A_\hhh^\top|V^\bot\|x + \proj_{V^\bot}(\fii_\hhh(0))
  \end{equation*}
  for all $x \in V^\bot$. If \eqref{eq:main1-iteration} does not hold, then there are $x \in \proj_{V^\bot}(\conv(\fii_\iii(X)))$ and $y \in \proj_{V^\bot}(\conv(\fii_\jjj(X)))$ such that $|x-y| < \|\proj_{V^\bot}A_\iii\| \roo$. Since
  \begin{equation*}
    |g^{-1}(x)-g^{-1}(y)| = \|A_\hhh^\top|V^\bot\|^{-1}|x-y| < \roo,
  \end{equation*}
  where $g^{-1}(x) \in \proj_{V^\bot}(\conv(\fii_i(X)))$ and $g^{-1}(y) \in \proj_{V^\bot}(\conv(\fii_j(X)))$, we have a contradiction with the choice of $\roo$. Therefore \eqref{eq:main1-iteration} holds.

  By domination, it follows from \cite[Lemma 2.8]{BaranyKaenmakiYu2021} that there exists a constant $D \ge 1$ such that
  \begin{equation*}
    \|\proj_{V^\bot}A_\iii\| \le \alpha_1(\iii|_n) \le D\|\proj_{V^\bot}A_\iii\|
  \end{equation*}
  for all $\iii \in \Sigma_*$ and $V \in X_F$. Hence, by \eqref{eq:main1-iteration}, we see that
  \begin{equation*}
    B(\pi(\iii),\roo (2D)^{-1}\alpha_1(\iii|_n)) \cap \fii_\jjj(X) = \emptyset
  \end{equation*}
  for all $\jjj \in \Sigma_n \setminus \{\iii|_n\}$ and, in particular,
  \begin{equation*}
    \pi_*\mu(B(\pi(\iii),\roo (2D)^{-1}\alpha_1(\iii|_n))) \le \mu([\iii|_n])
  \end{equation*}
  for all $\iii \in \Sigma$, $n \in \N$, and for any measure $\mu \in \MM(\Sigma)$. Therefore, Proposition \ref{thm:upper-bound-large-q}(1) extends \eqref{eq:main1-bound} for all $0<q\ne 1$ and Proposition \ref{thm:main-large-q} gives the lower bound.
\end{proof}

\begin{proof}[Proof of Theorem \ref{1.3}]
  Let $\mu \in \MM(\Sigma)$ be a fully supported quasi-Bernoulli measure. By Proposition \ref{thm:upper-bound-small-q}, we have
  \begin{equation} \label{eq:main2-bound}
    \dimh(X_{\tau'(q)}) \le q\tau'(q)-\tau(q)
  \end{equation}
  whenever $0<q<1$. Since for each $V \in X_F$ there exists a Lebesgue integrable function $g_V \colon V^\bot \to \R$ such that $C=\sup_{V \in X_F}\|g_V\|_\infty < \infty$ and
  \begin{equation*}
    (\proj_{V^\bot})_*(\pi_*\mu)(A) = \int_A g_V \dd\LL^1
  \end{equation*}
  for all Borel sets $A \subset V^\bot$, where $\LL^1$ is the Lebesgue measure on $V^\bot$, we have
  \begin{equation*}
    (\proj_{V^\bot})_*(\pi_*\mu)(\proj_{V^\bot}(B(x,r))) \le 2Cr
  \end{equation*}
  for all $x \in \R^2$, $V \in X_F$, and $r>0$. Therefore, Proposition \ref{thm:upper-bound-large-q}(2) extends \eqref{eq:main2-bound} for all $0<q\ne 1$ and Proposition \ref{thm:main-small-q-large-other} gives the lower bound.
\end{proof}

To finish the discussion, let us prove Proposition \ref{thm:abs-cont}. The Fourier transform of a finite Borel measure $\mu$ on $\R^2$ is defined by
\begin{equation*}
  \hat\mu(\xi) = \int_{\R^2} e^{i\langle \xi,x \rangle} \dd\mu(x)
\end{equation*}
for all $\xi \in \R^2$ and the Fourier transform of an integrable function $\psi \colon \R^d \to \R$ by
\begin{equation*}
  \hat\psi(\xi) = \int_{\R^d} \psi(\vvv) e^{i\langle \xi,\vvv \rangle} \dd\vvv
\end{equation*}
for all $\xi \in \R^d$, where $\langle \,\cdot\,,\cdot\, \rangle$ denotes the standard inner product on $\R^d$. If $\psi$ is a smooth function with compact support on $\R^d$, then it follows from the Riemann-Lebesgue lemma and integration by parts that for every $M$ there is a constant $C_M>0$ such that
\begin{equation} \label{eq:riemann-lebesgue}
  |\hat\psi(\xi)|\leq \frac{C_M}{(1+|\xi|)^M}
\end{equation}
for all $\xi\in\R^d$. There are intimate connections between the regularity of a measure and the decay rate of its Fourier transform. In particular, if $\mu$ is a finite Borel measure with compact support on $\R^2$ such that
\begin{equation} \label{eq:abs-cont-via-fourier}
  \int_{\R^2} |\xi|^\beta |\hat\mu(\xi)|^2 \dd\xi < \infty
\end{equation}
for some $\beta>2$, then, by \cite[Theorem 5.4]{Mattila2015}, $\mu$ is absolutely continuous with bounded continuous density.

\begin{proof}[Proof of Proposition \ref{thm:abs-cont}]
  Assuming that $\mu$ is a fully supported self-affine measure satisfying
  \begin{equation} \label{eq:abs-cont-mu3}
    \sum_{i=1}^N |c_i|^{-s}\mu(\fii_i(X))^2 < 1
  \end{equation}
  for some $s>3$, the proof requires only minor adjustments to that of \cite[Proposition 4.2]{Barany2025}: we replace the equilibrium state with $\mu$ and invoke assumption \eqref{eq:abs-cont-mu3} at the conclusion to ensure the finiteness of the integral. The key steps are outlined below. We indicate at the end of the proof how the extension to quasi-Bernoulli measures is achieved.

  We write $\mu_\vvv$ to emphasize the dependence of the self-affine measure on $\vvv = (v_1^2,\ldots,v_N^2)$. Let $s>3$ be such that \eqref{eq:abs-cont-mu3} holds and choose $2<\beta<s-1$. Choose $h \colon \RP \to [0,\infty)$ to be a compactly supported continuous density function uniformly separated away from zero on the smallest projective interval $[-C,C]$ containing $X_F$. We identify $\RP$ with $[0,1)$. Recalling \eqref{eq:riemann-lebesgue}, let $C_1>0$ be such that
  \begin{equation*}
    |\hat h(\xi)|\leq \frac{C_1}{(1+|\xi|)^s}
  \end{equation*}
  for all $\xi \in \R$. We define a compactly supported measure $\nu_\vvv$ on $\R^2$ by
  \begin{equation*}
    \nu_\vvv(A) = \iint_A h(x)\dd(\proj_x)_*\mu_\vvv(y)\dd x
  \end{equation*}
  for all Borel sets $A \subset \R^2$, where $\proj_x$ with $\mathrm{im}\proj_x = \R$ corresponds to the orthogonal projection $\proj_{V^\perp}$ via the identification. Let $\psi \colon \R^N \to [0,\infty)$ be a compactly supported density function and, by \eqref{eq:riemann-lebesgue}, let $C_2>0$ be such that
  \begin{equation*}
    |\hat\psi(\xi)|\leq \frac{C_2}{(1+\|\xi\|)^s}
  \end{equation*}
  for all $\xi \in \R^N$. By the proof of \cite[Proposition 4.2]{Barany2025} and \eqref{eq:abs-cont-mu3}, there is a constant $C_3>0$ such that
  \begin{equation} \label{eq:abs-cont-mu3-proof}
  \begin{split}
    \biggl| \int_{\R^N} &\int_{\R^2} |\xi|^\beta |\hat\nu_\vvv(\xi)|^2 \dd\xi \;\psi(\vvv)\dd\vvv \biggr| \le \cdots \\ 
    &\le C_3\int_{\R^2} \frac{|(\xi_1,\xi_2)|^\beta}{(1+|\xi_1|)^{s}(1+|\xi_2|)^{s}} \dd\xi_1\dd\xi_2 \sum_{n \in \N} \biggl( \sum_{i=1}^N \frac{\mu(\fii_i(X))^2}{|c_i|^s} \biggr)^n \\ 
    &< \infty,
  \end{split}
  \end{equation}
  where the calculation omitted by the ellipsis is identical to that in the proof of \cite[Proposition 4.2]{Barany2025}. Therefore, for Lebesgue almost every choice of $\vvv$, by \eqref{eq:abs-cont-via-fourier}, $\nu_\vvv$ is absolutely continuous with bounded continuous density, i.e., there exists a continuous $g_\vvv \colon \R^2 \to [0,\infty)$ such that $\nu_\vvv(A) = \int_A g_\vvv(x,y)\dd(x,y)$ for all Borel sets $A \subset \R^2$. Since $h$ is uniformly separated away from zero, it follows that $(\proj_x)_*\mu$ is absolutely continuous with uniformly bounded continuous density $g_\vvv(x,y)/h(x)$.

  The proof extends to quasi-Bernoulli measures $\mu$ by invoking the assumption \eqref{eq:abs-cont-qb} at the calculation \eqref{eq:abs-cont-mu3-proof} to ensure the finiteness of the integral in question.
\end{proof}

\begin{ack}
  In retrospect, AK gratefully acknowledges discussions with Thomas Jordan, Pablo Shmerkin, and K\'aroly Simon around 2012, which served as a motivating catalyst and significantly shaped the trajectory of this work. Additionally, he expresses gratitude to Bal\'azs B\'ar\'any for valuable conversations related to Proposition \ref{thm:abs-cont}. 
\end{ack}

{\bf Declaration.} The authors have no relevant financial or non-financial interests to disclose. This work was not supported by grants from any funding bodies.

\bibliographystyle{abbrv} 
\bibliography{SA}

\addresses

\end{document}